\documentclass[12pt, reqno]{amsart}
\usepackage{amsmath, amsthm, amscd, amsfonts, amssymb, graphicx, color, mathrsfs}
\usepackage[bookmarksnumbered, colorlinks, plainpages]{hyperref}
\usepackage{cite}
\usepackage[all]{xy}
\usepackage{slashed}
\usepackage{tikz-cd}
\usepackage{mathabx}
\usepackage{tipa}
\usepackage{soul}
\usepackage{cancel}
\usepackage{ulem}
\usepackage{algorithm}
\usepackage{algpseudocode}
\usepackage{enumitem}
\usepackage{multirow}
\usepackage{float}
\usepackage{verbatim}

\DeclareMathOperator{\Ad}{Ad}
\DeclareMathOperator{\ad}{ad}
\DeclareMathOperator{\GL}{GL}

\DeclareMathOperator{\SO}{SO}

\DeclareMathOperator{\Span}{span}

\allowdisplaybreaks

\newlength\tindent
\newtheorem{theorem}{Theorem}[section]

\newtheorem{proposition}[theorem]{Proposition}

\theoremstyle{definition}

\theoremstyle{remark}
\newtheorem{remark}[theorem]{Remark}
\numberwithin{equation}{section}

\begin{document}
\setcounter{page}{1}

\title[Einstein Metrics on the Complex Projective Space]{Cohomogeneity One Einstein Metrics on Complex Projective Spaces}

\author[A. Araujo]{Anderson L. A. de Araujo}
\address{Anderson L. A. de Araujo \endgraf 
Universidade Federal de Vi\c{c}osa, Departamento de Matem\'atica\\ Avenida Peter Henry Rolfs, s/n\\ CEP 36570-900, Vi\c{c}osa, MG, Brazil \endgraf
{\it E-mail address:} {\rm anderson.araujo@ufv.br}
 }
 
\author[B. Grajales]{Brian Grajales}
\address{Brian Grajales \endgraf
Universidade Estadual de Maringá, Departamento de Matem\'{a}tica, Avenida Colombo, 5790, Campus Universitário, 87020-900, Maringá - PR, Brazil 
\endgraf
  {\it E-mail address:} {\rm bdgtriana@uem.br}
  }

\author[L. Grama]{Lino Grama}
\address{Lino Grama \endgraf
IMECC-Unicamp, Departamento de Matem\'{a}tica. Rua S\'{e}rgio Buarque de Holanda \endgraf 651, Cidade Universit\'{a}ria Zeferino Vaz. 13083-859, Campinas - SP, Brazil
\endgraf
{\it E-mail address:} {\rm lino@ime.unicamp.br}
  }

%\thanks{B. Grajales is supported has been partially supported by  Universidad de Pamplona. %J. Delgado is also supported by Vice. Inv.Universidad del Valle Grant CI 71329, MathAmSud and Minciencias-Colombia under the project MATHAMSUD21-MATH-03. 
%M. Ruzhansky is also supported  by EPSRC grant 
%EP/R003025/2.
%}

 \keywords{Einstein metrics, cohomogeneity one, complex projective spaces}
     \subjclass[2020]{53C25, 57S25}

\begin{abstract}
We study Einstein metrics on complex projective spaces that are invariant under cohomogeneity one actions of compact connected Lie groups, under the assumption that the singular orbits are totally geodesic. These actions were classified by Takagi into five models. For each of them, we write the Einstein equation for diagonal invariant metrics and determine the corresponding smoothness conditions at the singular orbits. Our main result is the nonexistence of smooth globally defined invariant Einstein metrics in four of the five models and a necessary condition for global existence in the remaining one.
\end{abstract}

\maketitle

\tableofcontents
\allowdisplaybreaks

\section{Introduction}
% \textcolor{blue}{\textbf{Sugestões para complementar a introdução}}

% \textcolor{blue}{- Incluir uma parte que fale sobre o problema de existencia de métricas de Einstein num contexto geral.}

% \textcolor{blue}{- Falar sobre algumas obstruções topológicas para existência de métricas de Einstein.}

% \textcolor{blue}{- Seria bom escrever sobre o problema no caso homogêneo, referencia muito do que tem sido feito e contrastar com o que se tem para variedades de cohomogeneidad um.}

% \textcolor{blue}{- Complementar a bibliografia.}

% \textcolor{blue}{- Escrever seção de acknowledgments}\\

%In this paper, we consider the existence problem for Einstein metrics on complex projective spaces that are invariant under a cohomogeneity one action of a compact connected Lie group. 
In this paper, we study Einstein metrics on complex projective spaces admitting a cohomogeneity one action by a compact connected Lie group. In this setting, the Einstein equation \( \operatorname{Ric}(g) = \lambda g \) reduces to a system of nonlinear ordinary differential equations along a one-dimensional orbit space. \\

In the homogeneous setting, Einstein metrics on complex projective spaces are well understood. In addition to the standard Fubini--Study metric, $\mathbb{CP}^{2n-1}$ admits a non-K\"ahler homogeneous Einstein metric induced by the transitive action of $\mathrm{Sp}(n)$. This metric is nearly K\"ahler and arises from the twistor fibration over the quaternionic projective space $\mathbb{HP}^{n-1}$ (see, e.g., Ziller~\cite{Ziller1982}). More generally, the classification of homogeneous Einstein metrics on compact homogeneous spaces has been extensively studied; we refer to~\cite{Besse1987, LauretWill2025Aligned, LauretWill2025HxH, Schwahn2022, SchwahnSemmelmann2025} for further examples and results.\\

In contrast, the study of Einstein metrics in the cohomogeneity one setting is significantly more subtle and remains far from complete. A fundamental breakthrough in this direction is due to B\"ohm~\cite{B}, who constructed infinitely many inhomogeneous Einstein metrics on spheres via cohomogeneity one actions. These examples highlight the richness of the cohomogeneity one framework and show that, beyond the homogeneous case, the space of Einstein metrics can be considerably more intricate.\\

%We derive explicit expressions for the Ricci tensor and carry out a detailed analysis of the resulting system, leading to new results on the existence of invariant Einstein metrics on \(\mathbb{CP}^n\).

Motivated by these developments, we return to complex projective spaces and consider $\mathbb{CP}^n$ endowed with a cohomogeneity one action. Our approach relies on an explicit description of the Ricci tensor in this setting, leading to a system of nonlinear ordinary differential equations whose detailed analysis yields new results on the existence of invariant Einstein metrics on $\mathbb{CP}^n$. \\

% We restrict to the case in which the singular orbits are totally geodesic. Despite this restriction, the analysis is far from straightforward; the Einstein equations still yield a singular boundary value problem near the orbits that requires careful treatment. Moreover, this assumption is automatic, for example, when the singular orbit, viewed as a homogeneous space, is irreducible under the action of the principal isotropy group.  This is the case for the action of \(\mathrm{U}(p+1)\times \mathrm{U}(q+1)\) on \(\mathbb{C}P^{p+q+1}\).\\

We restrict our attention to the case in which the singular orbits are totally geodesic. Even under this assumption, the problem remains highly nontrivial since the Einstein equations lead to a singular boundary value problem near the orbits that requires delicate analysis. This condition is automatically satisfied, for example, when the singular orbit is irreducible, as a homogeneous space, under the action of the principal isotropy group. This is the case for the cohomogeneity one action of $\mathrm{U}(p+1)\times \mathrm{U}(q+1)$ on $\mathbb{CP}^{p+q+1}$.\\ 

We briefly recall the geometric structure behind the problem. Let \(G\) be a compact connected Lie group acting smoothly on a manifold \(M\) with cohomogeneity one, and assume that \(M/G\simeq [0,1]\). Then the boundary points correspond to the singular orbits and the interior points correspond to the principal orbits of the action. Fix a unit-speed geodesic \(c:[0,1]\to M\) orthogonal to all \(G\)-orbits. Then every \(G\)-invariant metric is determined on the union of the principal orbits by a one-parameter family of \(G\)-invariant metrics on a principal orbit. Once a decomposition of the isotropy representation of a principal orbit is fixed and one restricts to diagonal metrics, such a metric is described by a finite family of smooth functions \(f_i(t)\), where \(t\in (0,1)\). In this setting, a {\it globally defined} smooth invariant metric means that these functions are defined on the whole interval \([0,1]\) and satisfy certain smoothness conditions at the boundary points, so that the metric extends smoothly to the two singular orbits.\\

%According to \cite[Theorem 3.1]{PT}, any cohomogeneity one action of a compact connected Lie group on the complex projective space $\mathbb{C}P^n$ is orbit equivalent to an action induced by a Hermitian symmetric pair. These actions were classified by Takagi in \cite{T}. We briefy sumarise the classification of cohomgenity one actions on $\mathbb{C}P^n$, see Section \ref{sec:cohomo-one} for more details.

%Let $G$ be a compact connected Lie group acting on a complex projective space with cohomogeneity one. Then the action of $G$ is orbit equivalent to the action induced by a Hermitian symmetric pair $(\tilde{U},\tilde{K}),$ whose corresponding Lie algebras are given in the table below.

Cohomogeneity one actions on complex projective spaces are well understood. By \cite[Theorem 3.1]{PT}, every cohomogeneity one action of a compact connected Lie group on $\mathbb{CP}^n$ is orbit equivalent to an action induced by a Hermitian symmetric pair. These actions were classified by Takagi in \cite{T}. For the convenience of the reader, we briefly recall this classification below and refer to Section~\ref{sec:cohomo-one} for a more detailed discussion.

More precisely, if $G$ acts on $\mathbb{CP}^n$ with cohomogeneity one, then the action is orbit equivalent to one associated with a Hermitian symmetric pair $(\widetilde{U},\widetilde{K})$. The corresponding Lie algebras are listed in the following table.

\begin{table}[H]
    \centering
    \begin{tabular}{|c|c|c|}
        \hline & & \\ \textnormal{\textbf{Model}} & $\mathfrak{\tilde{u}}$ & $\mathfrak{\tilde{k}}$\\\hline
        \multirow{2}{*}{\textnormal{\textbf{A}}} & $\mathfrak{su}(p+1)\oplus\mathfrak{su}(q+1),$ &\multirow{2}{*}{$\mathfrak{s}(\mathfrak{u}(p)\oplus\mathfrak{u}(1))\oplus\mathfrak{s}(\mathfrak{u}(q)\oplus\mathfrak{u}(1))$}\\
        & $ p\geq q\geq 1,\ p>1$ &\\\hline
        \multirow{2}{*}{\textnormal{\textbf{B}}} & $\mathfrak{su}(n+3),$ &\multirow{2}{*}{$\mathfrak{s}(\mathfrak{u}(n+1)\oplus\mathfrak{u}(2))$}\\
        & $ n\geq 2$ &\\\hline
        \multirow{2}{*}{\textnormal{\textbf{C}}} & $\mathfrak{so}(n+2),$ &\multirow{2}{*}{$\mathfrak{so}(n)\oplus\mathbb{R}$}\\
        &$ n\geq 3$ &\\\hline
        \multirow{1}{*}{\textnormal{\textbf{D}}} & $\mathfrak{so}(10)$ &\multirow{1}{*}{$\mathfrak{u}(5)$}\\\hline
        \multirow{1}{*}{\textnormal{\textbf{E}}} & $\mathfrak{e}_6$ &\multirow{1}{*}{$\mathfrak{so}(10)\oplus\mathbb{R}$}\\\hline
    \end{tabular}
    \vspace{0.3cm}
    \caption{Models for cohomogeneity one actions on complex projective spaces.}
    \label{table:1}
\end{table}

%The classification of cohomogeneity one actions on complex projective spaces was obtained by Takagi \cite{T}. More precisely, every cohomogeneity one action of a compact connected Lie group on \(\mathbb{C}P^n\) is orbit equivalent to one of the five models listed in Table~\ref{table:1}. 

%In this paper, we fix one representative action for each model and describe explicitly the principal and singular isotropy groups, the singular orbits, and the corresponding decomposition of the isotropy representation. For each model, we then obtain the Einstein equation for diagonal invariant metrics. In contrast to the case of homogeneous metrics, where the problem reduces to algebraic equations, here it reduces to a system of ordinary differential equations. The dimension of this system depends on the isotropy decomposition of the principal orbit. It is equal to \(3\) in Models {\bf A} and {\bf C}, and equal to \(5\) in Models {\bf B}, {\bf D}, and {\bf E}. A difficulty in this reduction comes from the presence of equivalent isotropy summands in some models. Even though we only consider diagonal metrics, this still has an effect on the Einstein equation. In particular, in Model {\bf B} there is a relation coming from the non-diagonal components of the Ricci tensor.\\

In this paper, we fix a representative action for each model and describe explicitly the principal and singular isotropy groups, the singular orbits, and the corresponding decomposition of the isotropy representation. For each model, we derive the Einstein equations for diagonal invariant metrics. In contrast to the homogeneous setting, where the Einstein condition reduces to a system of algebraic equations, here it leads to a system of ordinary differential equations. The dimension of this system depends on the isotropy decomposition of the principal orbit: it is equal to \(3\) in Models {\bf A} and {\bf C}, and equal to \(5\) in Models {\bf B}, {\bf D}, and {\bf E}. One of the main difficulties in this reduction arises from the presence of equivalent isotropy summands in certain models. Although we restrict our attention to diagonal metrics, these equivalences still influence the Einstein equations. In particular, in Model {\bf B} an additional relation appears due to the non-diagonal components of the Ricci tensor.\\

Our approach is based on the work of Eschenburg and Wang~\cite{EW}. In their paper, the Einstein equation near a singular orbit is treated as a singular initial value problem, and, under suitable assumptions, local existence of invariant Einstein metrics is established for prescribed data on the singular orbit. Using this framework, we derive the systems induced by the Einstein equation in each model and determine the corresponding smoothness conditions at the singular orbits. \\

The results depend strongly on the model under consideration. In Model {\bf A}, the Einstein system subject to the smoothness conditions admits infinitely many local solutions near each singular orbit, and we obtain a necessary compatibility condition on the boundary data for the existence of a smooth globally defined invariant Einstein metric. In Models {\bf C}, {\bf D}, and {\bf E}, we construct local Einstein metrics defined on neighborhoods of the singular orbits. In Model {\bf B}, the reduced system together with the corresponding boundary conditions also admits local solutions near each singular orbit; however, because of the additional non-diagonal equation, these solutions are not automatically Einstein metrics. We further prove that there exists no smooth globally defined invariant Einstein metric of the type considered here in Models {\bf B}, {\bf C}, {\bf D}, and {\bf E}.\\

The nonexistence results are obtained from an incompatibility between the solutions of the Einstein equation and the smoothness conditions at the singular orbits. More precisely, the metric is first determined by solving an initial value problem induced by the Einstein equation together with one of the boundary conditions. We then identify an invariant subspace for the resulting system. In Model {\bf C}, the local solution determined by the smoothness conditions is unique, and the invariant subspace does not satisfy the second boundary condition, yielding a contradiction. In Models {\bf B}, {\bf D}, and {\bf E}, uniqueness fails; nevertheless, we show that every local solution still lies in the same invariant subspace, whereas the smoothness conditions at the opposite singular orbit do not. This again leads to a contradiction and proves the nonexistence of smooth globally defined invariant Einstein metrics in these models.\\

The paper is organized as follows. In Section 2, we recall the general form of the Einstein equation for diagonal cohomogeneity one metrics and the corresponding smoothness conditions at the singular orbits. In Section 3, we describe the cohomogeneity one actions on complex projective spaces that will be considered in the paper. In Section 4, we obtain the Einstein equation for each model and study the corresponding local and global existence problems. 

\subsection{Conventions}
Throughout the paper, several symbols (such as $G$, $H$, $K_i$, $Q_i$, and $E_{ij},F_{ij}$)
are reused in different sections with different meanings; they should always be interpreted in the context
of the model under discussion. We use the symbol $\approx$ to indicate that
two manifolds are diffeomorphic. We write $\mathrm{Tr}(\cdot)$ for the matrix trace. Finally, $\overline{U}$ denotes the entrywise
complex conjugate of a matrix $U$.

\section{Einstein Metrics on Cohomogeneity One Manifolds}

Let $G$ be a compact connected Lie group acting smoothly on a manifold $M$ with
cohomogeneity one, assume that there are no exceptional orbits and that $M/G\approx [0,1]$.
In this case, the boundary points $0$ and $1$ correspond to the singular orbits and the
interior points correspond to principal orbits. Fix an arbitrary $G$-invariant Riemannian
metric on $M$. Then one can choose a unit-speed geodesic $c:[0,1]\rightarrow M$ which is
orthogonal to all $G$-orbits and whose projection parametrizes the quotient $M/G$.\\

Denote by $H$ the isotropy group of any element in a principal orbit
$P=G\cdot c(t)$, $0<t<1$, and by $K_i$, $i=0,1$, the isotropy group of an element in the
singular orbit $Q_i=G\cdot c(i)$, $i=0,1$. Up to conjugacy, we may assume that
$H\subset K_i$ for $i=0,1$, and we have the identifications $P\approx G/H$,
$Q_i\approx G/K_i$, and $K_i/H\approx\mathbb{S}^{\ell_i}$ for some $\ell_i\geq 1$,
where $\mathbb{S}^{\ell}$ denotes the $\ell$-dimensional unit sphere (see \cite[Theorem 2]{M}). The manifold $M$ is
then diffeomorphic to the union of two disk bundles over the two singular orbits whose
fibers are open discs in the normal spaces.\\

Any $G$-invariant metric $\hat{g}$ on $M$ is determined by its restriction $g$ to the union $M_0$ of all the principal orbits and this restriction can be written as
\begin{equation}\label{metric:g}
g=dt^2+g(t)
\end{equation}
where $\{g(t):t\in(0,1)\}$ is a one-parameter family of $G$-invariant metrics on $P=G/H$ that can be extended smoothly to the closed interval $[0,1]$.\\ 

Let $\mathfrak{g},$ $\mathfrak{k}_0$ and $\mathfrak{h}$ denote the Lie algebras of $G,$ $K_0$ and $H,$ respectively. Consider an $\textnormal{Ad}(G)$-invariant inner product $(\cdot,\cdot)$ on $\mathfrak{g}$, and let $\mathfrak{m}:=\mathfrak{h}^\perp$ be the orthogonal complement of $\mathfrak{h}$ in $\mathfrak{g}$ with respect to this inner product. The isotropy representation of $G/H$ is equivalent to the adjoint representation $$\Ad^H\big{|}_{\mathfrak{m}}:H\longrightarrow \GL(\mathfrak{m});\ h\longmapsto \Ad(h)\big{|}_{\mathfrak{m}}.$$
We fix irreducible, pairwise $(\cdot,\cdot)$-orthogonal subrepresentations $\mathfrak{m}_1,...,\mathfrak{m}_s$ such that, for each $i\in\{1,...,s\},$ either $\mathfrak{m}_i\subseteq\mathfrak{k}_0$ or $\mathfrak{m}_i\cap\mathfrak{k}_0=\{0\}.$ Define the subspaces $$\mathfrak{p}_0:=\bigoplus\limits_{i\in J_+}\mathfrak{m}_i\ \textnormal{and}\ \mathfrak{n}_0:=\bigoplus\limits_{i\in J_-}\mathfrak{m}_i,$$ where $J_+=\{i\in\{1,...,s\}:\mathfrak{m}_i\subseteq\mathfrak{k}_0\}$ and $J_-=\{i\in\{1,...,s\}:\mathfrak{m}_i\cap\mathfrak{k}_0=\{0\}\}.$ The subspaces $\mathfrak{p}_0$ and $\mathfrak{n}_0$ can be identified with $T_{H}\left(K_0/H\right)=T_{H}\mathbb{S}^{\ell_0}$ and $T_{K_0}(G/K_0)=T_{K_0}Q_0,$ respectively. We assume that $\mathfrak{n}_0$ decomposes into pairwise inequivalent irreducible $\Ad(K_0)$-invariant subspaces as
\begin{equation}\label{decomposition:G/K}
\mathfrak{n}_0=\bigoplus_{j=1}^{N}\mathfrak{r}_j,
\end{equation}
where
\[
\mathfrak{r}_j=\bigoplus_{i\in J_-^j}\mathfrak{m}_i,\ j=1,\dots,N,\ \textnormal{and}\ J_-=J_-^1\cup\cdots\cup J_-^N.
\]
For each $t\in(0,1)$, the metric $g(t)$ can be identified with an $\Ad(H)$-invariant inner product on $\mathfrak{m}$. Consequently, there exists a positive, $(\cdot,\cdot)$-selfadjoint, $\textnormal{Ad}^H\big{|}_{\mathfrak{m}}$-equivariant operator $P(t):\mathfrak{m}\rightarrow\mathfrak{m}$ such that $$g(t)(X,Y)=(P(t)X,Y),\ X,Y\in\mathfrak{m}.$$ A diagonal metric is obtained when $$P(t)=f_1(t)^2\mathrm{Id}_{\mathfrak{m}_1}+\cdots+f_s(t)^2\mathrm{Id}_{\mathfrak{m}_s}$$
where $f_i:(0,1) \to \mathbb{R}^+,\ i=1,...,s$ are smooth functions. In this case, the shape operator $L(t)$ of the principal orbit through $c(t),\ 0<t<1$ is given by $$L(t)=\frac{1}{2}P(t)^{-1}P'(t)=\frac{f_1'(t)}{f_1(t)}\mathrm{Id}_{\mathfrak{m}_1}+\cdots+\frac{f_s'(t)}{f_s(t)}\mathrm{Id}_{\mathfrak{m}_s}.$$
If $T=\partial/\partial t,$ then the tangent space $T_{c(t)}M$ of $M$ at $c(t)$ can be decomposed as the $g$-orthogonal sum $\Span\{T\}\oplus\mathfrak{m},$ where $\mathfrak{m}$ is identified with  $T_{c(t)}\left(G\cdot c(t)\right).$ For an $(\cdot,\cdot)$-orthonormal basis $\{e_\alpha\}$ of $\mathfrak{m}$ adapted to the decomposition 
\begin{equation*}
    \mathfrak{m}=\mathfrak{m}_1\oplus\cdots\oplus\mathfrak{m}_s,
\end{equation*}
define the symbol
\begin{equation*}
\displaystyle[ijk]=\sum\limits_{e_\alpha\in\mathfrak{m}_i}\sum\limits_{e_\beta\in\mathfrak{m}_j}\sum\limits_{e_\gamma\in\mathfrak{m}_k}\left(\left[e_\alpha,e_\beta\right],e_\gamma\right)^2.
\end{equation*}
Note that $[ijk]$ is symmetric in $i,j,k.$ If $B$ denotes the Killing form of $\mathfrak{g},$ then there exist constants $b_i$ such that 
\begin{equation}\label{equation:b_i}
    -B\big{|}_{\mathfrak{m}_i\times\mathfrak{m}_i}=b_i(\cdot,\cdot)\big{|}_{\mathfrak{m}_i\times\mathfrak{m}_i},\ i=1,...,s.
\end{equation}
%The following proposition gives us the Einstein equations for the metric \eqref{metric:g}. To study the Einstein metrics on $M,$ we will use the following formulas for the Ricci curvature that can be obtained directly using \cite[Proposition 1.14]{GZ}. 
\begin{proposition}\label{Einstein:equations:Proposition}
    Let $\{e_\alpha\}$ as above. The metric 
    \begin{equation}\label{metric:g:2}
    g=dt^2+f_1(t)^2(\cdot,\cdot)\big{|}_{\mathfrak{m}_1\times\mathfrak{m}_1}+\cdots+f_s(t)^2(\cdot,\cdot)\big{|}_{\mathfrak{m}_s\times\mathfrak{m}_s}
    \end{equation} is Einstein for the constant $\lambda\in\mathbb{R}$ if and only if the functions $f_i,\ i=1,...,s$ satisfy the following equations:
    \begin{align}
            &d_1\frac{f_1''}{f_1}+\cdots+d_s\frac{f_s''}{f_s}=-\lambda, \label{Einstein1}\\
            &\frac{f_i''}{f_i}-\frac{b_i}{2f_i^2}-\frac{1}{d_i}\sum\limits_{k,\ell=1}^{s}\frac{f_i^4-2f_k^4}{4f_i^2f_k^2f_{\ell}^2}[ik\ell]+(d_i-1)\frac{f_i'^2}{f_i^2}+\sum\limits_{\begin{subarray}{c}
            k=1\\k\neq i    
            \end{subarray}}^{s}d_k\frac{f_i'f_k'}{f_if_k}=-\lambda,\label{Einstein2}\\
            &\sum\limits_{k,\ell=1}^{s}\frac{f_i^2f_j^2-2f_k^4+2f_k^2f_\ell^2}{4f_k^2f_\ell^2}\sum\limits_{e_\gamma\in\mathfrak{m}_k}\left(\left[e_\delta,e_\gamma\right]_{\mathfrak{m}_\ell},\left[e_\eta,e_\gamma\right]_{\mathfrak{m}_\ell}\right)=0,\ i\neq j,\label{Einstein3}
    \end{align} 
    where $e_\delta\in\mathfrak{m}_i,$ $e_\eta\in\mathfrak{m}_j$ and $d_k:=\dim\mathfrak{m}_k,\ k=1,...,s.$
\end{proposition}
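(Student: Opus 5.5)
We will derive the three families of equations from the standard formula for the Ricci tensor of a cohomogeneity one metric $g=dt^2+g(t)$ (Eschenburg--Wang, or equivalently \cite[Proposition 1.14]{GZ}), combined with the Ricci formula for a $G$-invariant metric on the principal orbit $G/H$.

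\textbf{Step 1: Gauss--Codazzi splitting.} On $M_0$ we have, with $L=\frac12P^{-1}P'$,
\begin{align*}
\mathrm{Ric}(T,T)&=-\mathrm{tr}(L')-\mathrm{tr}(L^2),\\
\mathrm{Ric}(X,Y)&=\mathrm{Ric}_t(X,Y)-g\big((L'+(\mathrm{tr}L)L)X,Y\big),\\
\mathrm{Ric}(T,X)&=0 .
\end{align*}
Here $\mathrm{Ric}_t$ is the Ricci tensor of $(G/H,g(t))$ and $X,Y\in\mathfrak m$. We will verify that the mixed term vanishes: it equals $-\delta L$ up to sign, and for a diagonal $L$ it vanishes because each $\mathrm{Id}_{\mathfrak m_i}$ is $\Ad(H)$-equivariant and $\sum_\alpha[e_\alpha,e_\alpha]=0$ type identities hold.

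\textbf{Step 2: The $TT$ equation.} Substitute $L=\sum (f_i'/f_i)\mathrm{Id}_{\mathfrak m_i}$. Then $\mathrm{tr}L'+\mathrm{tr}L^2=\sum d_i\big((f_i'/f_i)'+(f_i'/f_i)^2\big)=\sum d_i f_i''/f_i$. Setting $\mathrm{Ric}(T,T)=\lambda$ gives \eqref{Einstein1}.

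\textbf{Step 3: Tangential diagonal equations.} For $X=Y=e_\delta\in\mathfrak m_i$, normalize by $g(e_\delta,e_\delta)=f_i^2$. The extrinsic part contributes
\[
-\Big((f_i'/f_i)'+(f_i'/f_i)\sum_k d_kf_k'/f_k\Big),
\]
which rearranges to $-\big(f_i''/f_i+(d_i-1)f_i'^2/f_i^2+\sum_{k\ne i}d_kf_i'f_k'/(f_if_k)\big)$.

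For the intrinsic part we use the Wang--Ziller / Park--Sakane formula for diagonal metrics $x_i=f_i^2$:
\[
r_i=\frac{b_i}{2x_i}+\frac{1}{4d_i}\sum_{k,\ell}\frac{x_i}{x_kx_\ell}[ik\ell]-\frac{1}{2d_i}\sum_{k,\ell}\frac{x_\ell}{x_ix_k}[ik\ell].
\]
The symmetry of $[ik\ell]$ in $k,\ell$ lets us combine the last two sums into $-\frac1{d_i}\sum\frac{f_i^4-2f_k^4}{4f_i^2f_k^2f_\ell^2}[ik\ell]$. Setting $\mathrm{Ric}=\lambda g$ and flipping signs gives \eqref{Einstein2}.

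One subtlety is that the Wang--Ziller formula gives the trace over $\mathfrak m_i$ divided by $d_i$. When $\mathfrak m_i$ has equivalent summands the restriction of $\mathrm{Ric}_t$ to $\mathfrak m_i$ is still scalar, by Schur on the irreducible $\mathfrak m_i$, so this averaging is legitimate.

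\textbf{Step 4: Off-diagonal equations.} The main obstacle is \eqref{Einstein3}: when some $\mathfrak m_i\cong\mathfrak m_j$ with $i\neq j$, $\mathrm{Ric}_t$ may have nonzero $\mathfrak m_i\times\mathfrak m_j$ blocks. The extrinsic terms are diagonal, so the Einstein condition there is exactly $\mathrm{Ric}_t(e_\delta,e_\eta)=0$.

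We will compute this directly from the general homogeneous Ricci formula (Besse 7.38):
\[
\mathrm{Ric}(X,Y)=-\tfrac12\sum_\gamma g([X,e_\gamma]_{\mathfrak m},[Y,e_\gamma]_{\mathfrak m})-\tfrac12B(X,Y)+\tfrac14\sum_{\gamma,\epsilon}g([e_\gamma,e_\epsilon]_{\mathfrak m},X)\,g([e_\gamma,e_\epsilon]_{\mathfrak m},Y).
\]
This is written in a $g$-orthonormal basis $e_\gamma/f_k$, and the term involving $Z=\sum U(e_\gamma,e_\gamma)$ vanishes for unimodular $G$. Since $B(e_\delta,e_\eta)=0$ for $i\ne j$, the remaining terms are re-expressed via $(\cdot,\cdot)$-brackets. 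Using $\Ad$-invariance, $([e_\gamma,e_\epsilon],e_\delta)=-([e_\delta,e_\gamma],e_\epsilon)$, we convert the third term into $\sum_\gamma([e_\delta,e_\gamma]_{\mathfrak m_\ell},[e_\eta,e_\gamma]_{\mathfrak m_\ell})$ with weight $f_i^2f_j^2/(f_k^2f_\ell^2)$.

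We must also symmetrize the first term in $k\leftrightarrow\ell$, which produces the combination $-2f_k^4+2f_k^2f_\ell^2$. The careful bookkeeping of these weights after symmetrization is the delicate point. Multiplying through by the common factor gives \eqref{Einstein3}.
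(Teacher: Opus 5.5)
Your Steps 1--3 are correct. They simply unpack \cite[Proposition 1.14]{GZ}, which is all the paper's proof does. The gap is in Step 4. Write $S_{k\ell}:=\sum_{e_\gamma\in\mathfrak m_k}\bigl([e_\delta,e_\gamma]_{\mathfrak m_\ell},[e_\eta,e_\gamma]_{\mathfrak m_\ell}\bigr)$. Your first and third terms give exactly $\sum_{k,\ell}\frac{f_i^2f_j^2-2f_\ell^4}{4f_k^2f_\ell^2}S_{k\ell}$. Since $S_{k\ell}=S_{\ell k}$ (because $([e_\delta,e_\gamma],e_\epsilon)=-([e_\delta,e_\epsilon],e_\gamma)$), relabeling turns $-2f_\ell^4$ into $-2f_k^4$. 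That is all symmetrization can do; it cannot produce the summand $2f_k^2f_\ell^2$ in \eqref{Einstein3}. That summand contributes the metric-independent quantity $\frac12\sum_{k,\ell}S_{k\ell}$, and it is precisely the Killing form term $-\frac12B(e_\delta,e_\eta)$ that you discarded.

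To see this, expand $-B(e_\delta,e_\eta)=\sum_\gamma([e_\delta,e_\gamma],[e_\eta,e_\gamma])$ over a $(\cdot,\cdot)$-orthonormal basis of all of $\mathfrak g$. Split both the index $\gamma$ and the brackets according to $\mathfrak g=\mathfrak h\oplus\mathfrak m$.
\begin{itemize}
\item The purely $\mathfrak m$-part is $\sum_{k,\ell}S_{k\ell}$.
\item The two parts involving $\mathfrak h$ each equal $\sum_{h}([h,e_\delta],[h,e_\eta])=(C_{\mathfrak h}e_\delta,e_\eta)$, where $C_{\mathfrak h}=-\sum_h\ad(h)^2$ is the Casimir operator of $\mathfrak h$ acting on $\mathfrak m$.
\end{itemize}
Since $C_{\mathfrak h}$ preserves every $\ad(\mathfrak h)$-invariant subspace, $(C_{\mathfrak h}e_\delta,e_\eta)=0$ for $i\neq j$. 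Hence $-\frac12B(e_\delta,e_\eta)=\frac12\sum_{k,\ell}S_{k\ell}$, which is exactly the missing piece.

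Your assertion $B(e_\delta,e_\eta)=0$ is not justified in the generality of the statement. Writing $-B(X,Y)=(AX,Y)$ with $A$ an $\Ad(G)$-equivariant operator, the map $X\mapsto(AX)_{\mathfrak m_j}$ is an $H$-equivariant map $\mathfrak m_i\to\mathfrak m_j$. Schur's lemma forces it to vanish only when $\mathfrak m_i\not\cong\mathfrak m_j$, and that is exactly the case in which \eqref{Einstein3} is vacuous. For equivalent summands it need not vanish when $\mathfrak g$ is not simple and $(\cdot,\cdot)$ is not a multiple of $-B$; the setup allows an arbitrary $\Ad(G)$-invariant inner product. Even where $B(e_\delta,e_\eta)=0$ does hold, you reach \eqref{Einstein3} only by inserting $0=\frac12\sum_{k,\ell}S_{k\ell}$, which again requires the identity $\sum_{k,\ell}S_{k\ell}=-B(e_\delta,e_\eta)$. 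So the Casimir argument above must replace ``drop $B$ and symmetrize'' either way.
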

\begin{proof}
It follows directly from the formulas for the Ricci tensor of a diagonal cohomogeneity one metric given in \cite[Proposition 1.14, Remark 1.16b]{GZ}. 
\end{proof}
    Note that the metric \eqref{metric:g:2} is undefined at the singular orbits. However, by \cite[Corollary 2.6]{EW}, if the functions $f_1,...,f_s$ satisfy the equations \eqref{Einstein2}-\eqref{Einstein3} and can be smoothly extended to the singular orbits, then the metric determined by \eqref{metric:g:2} is an Einstein metric defined on the entire manifold $M.$ Following the results of Sections 3 and 5 in \cite{EW}, we will explicitly determine the values of the functions $f_1, \dots, f_s$ and their derivatives at $t=0$ to guarantee the existence of an Einstein metric on an open neighborhood of the singular orbit $Q_0.$\\
    
    Let $c_i,\ i\in J_+$ be positive numbers such that $$\sum\limits_{i\in J_+}c_i^2(\cdot,\cdot)\big{|}_{\mathfrak{m}_i\times\mathfrak{m}_i}$$
    coincides with the standard metric of curvature one on $\mathfrak{p}_0=\bigoplus\limits_{i\in J_+}\mathfrak{m}_i=T_{H}\mathbb{S}^{\ell_0}.$ With the notations used in \cite{EW}, we will decompose the metric operator $P(t)$ at $t,$ and its corresponding shape operator $L(t)$  as
\begin{equation*}
P(t)=P(t)^++P(t)^-\ \textnormal{and}\ L(t)=L(t)^++L(t)^-,   
\end{equation*}
where $P(t)^+,L(t)^+$ are $\Ad(H)$-invariant endomorphisms of $\mathfrak{p}_0$ and $P(t)^-,L(t)^-$ are $\Ad(H)$-invariant endomorphisms of $\mathfrak{n}_0.$ Hence,
\begin{align*}
   &P(t)^+=\sum\limits_{i\in J_+}f_i(t)^2\mathrm{Id}_{\mathfrak{m}_i},\ P(t)^{-}=\sum\limits_{i\in J_-}f_i(t)^2\mathrm{Id}_{\mathfrak{m}_i},\\
   &L(t)^+=\sum\limits_{i\in J_+}\frac{f_i'(t)}{f_i(t)}\mathrm{Id}_{\mathfrak{m}_i},\ L(t)^{-}=\sum\limits_{i\in J_-}\frac{f_i'(t)}{f_i(t)}\mathrm{Id}_{\mathfrak{m}_i}.
\end{align*}
 Now, according to \cite[Sections 1 and 3]{EW}, for the metric \eqref{metric:g:2} satisfying \eqref{Einstein2}-\eqref{Einstein3} to admit a smooth extension to an open neighborhood of $Q_0,$ the endomorphisms $P(t)^{\pm},$ $L(t)^{\pm}$ must satisfy the following conditions:
    \begin{equation}\label{extension:conditions}
    \begin{array}{lll}
        \displaystyle\lim\limits_{t\rightarrow 0^+}\frac{1}{t^2}P(t)^{+}=\sum\limits_{i\in J_+}c_i^2\mathrm{Id}_{\mathfrak{m}_i}, && \lim\limits_{t\rightarrow 0^+} P(t)^-=P_0,\\
        \\
        \displaystyle\lim\limits_{t\rightarrow 0^+}L(t)^{+}-\frac{1}{t}\mathrm{Id}_{\mathfrak{p}_0}=0, && \lim\limits_{t\rightarrow 0^+} L(t)^-=L_0,\\
    \end{array}
    \end{equation}
    where $P_0$ the metric operator of an arbitrary $G$-invariant metric on the singular orbit $Q_0$ and $L_0$ is a given traceless symmetric $H$-invariant operator on $\mathfrak{n}_0$. Since we are assuming \(Q_0\) to be totally geodesic, we have \(L_0=0\). By \eqref{decomposition:G/K}, any such metric operator $P_0$ has the form  $P_0=\zeta_1\mathrm{Id}_{\mathfrak{r_1}}+\cdots+\zeta_N\mathrm{Id}_{\mathfrak{r}_N},$ for some arbitrary $\zeta_1,...,\zeta_N>0.$ Therefore, the conditions \eqref{extension:conditions} can be expressed in terms of the functions $f_1,...,f_s$ as 
    \begin{equation*}
        \begin{array}{lll}
        \displaystyle\lim\limits_{t\rightarrow 0^+}\frac{f_i(t)^2}{t^2}=c_i^2,\ i\in J_+, && \lim\limits_{t\rightarrow 0^+} f_i(t)=\zeta_j,\ i\in J_-^j,\\
        \\
        \displaystyle\lim\limits_{t\rightarrow 0^+}\frac{tf_i'(t)-f_i(t)}{tf_i(t)}=0,\ i\in J_+, && \lim\limits_{t\rightarrow 0^+} f_i'(t)=0,\ i\in J_-,\\
    \end{array}
    \end{equation*}
    for some arbitrary $\zeta_1,...,\zeta_N>0$ or, equivalently,
    \begin{align}\label{general:initial:conditions}
    \begin{split}
        &f_i(0)=f_i''(0)=0,\ f_i'(0)=c_i,\ i\in J_+,\\
        &f_i(0)=\zeta_j,\ i\in J_-^j,\ f_i'(0)=0,\ i\in J_-.
    \end{split}
    \end{align}
An analogous discussion applies to the singular orbit $Q_1=G/K_1$. In fact, the corresponding final smoothness conditions follow from the above ones by the reparametrization $t\mapsto 1-t,$ interchanging $K_0$ with $K_1$ and the corresponding decompositions

\section{Cohomogeneity One Actions on Complex Projective Spaces} \label{sec:cohomo-one}
Let $(\tilde{U}, \tilde{K})$ be a Hermitian symmetric pair of rank two, where $\tilde{U}$ is a compact semisimple Lie group and $\tilde{K}$ is a closed Lie subgroup of $\tilde{U}$. Let $\mathfrak{\tilde{u}}$ and $\mathfrak{\tilde{k}}$ denote the Lie algebras of $\tilde{U}$ and $\tilde{K}$, respectively. This Hermitian symmetric pair induces a cohomogeneity one action on a complex projective space, as described below.\\

Assume that $\tilde{\mathfrak{u}} = \mathfrak{\tilde{k}} \oplus \mathfrak{\tilde{p}}$ is a Cartan decomposition. The action $\tilde{K} \times \mathfrak{\tilde{p}} \ni (\tilde{k}, X) \mapsto \Ad(\tilde{k}) X \in \mathfrak{\tilde{p}}$ has cohomogeneity one. It is known that there exists an element $Z \in \mathfrak{\tilde{k}}$ such that $J:= \ad(Z)\big{|}_{\mathfrak{\tilde{p}}}$ defines an $\Ad(\tilde{K})$-invariant complex structure on $\mathfrak{\tilde{p}}$. Consequently, we can view $\mathfrak{\tilde{p}}$ as a complex vector space isomorphic to $\mathbb{C}^{n+1}$ for some $n \in \mathbb{N}$. The action of $\tilde{K}$ on $\tilde{\mathfrak{p}}$ induces a cohomogeneity one action on the projective space $P(\mathfrak{\tilde{p}}) \approx \mathbb{C}P^n.$\\

According to \cite[Theorem 3.1]{PT}, any cohomogeneity one action of a compact connected Lie group on the complex projective space $\mathbb{C}P^n$ is orbit equivalent to an action induced by a Hermitian symmetric pair. This means that the principal orbits of both actions are related by an isometry of $\mathbb{C}P^n$ with respect to the Fubini-Study metric. These actions were classified by Takagi in \cite{T}, where the author established the following result.

\begin{theorem} Let $G$ be a compact connected Lie group acting on a complex projective space with cohomogeneity one. Then the action of $G$ is orbit equivalent to the action induced by a Hermitian symmetric pair $(\tilde{U},\tilde{K}),$ whose corresponding Lie algebras are given in Table \ref{table:1}.

%\begin{table}[H]
%    \centering
%    \begin{tabular}{|c|c|c|}
%        \hline & & \\
%        \textnormal{\textbf{Model}} & $\mathfrak{\tilde{u}}$ & $\mathfrak{\tilde{k}}$\\\hline
%        \multirow{2}{*}{\textnormal{\textbf{A}}} & $\mathfrak{su}(p+1)\oplus\mathfrak{su}(q+1),$ &\multirow{2}{*}{$\mathfrak{s}(\mathfrak{u}(p)\oplus\mathfrak{u}(1))\oplus\mathfrak{s}(\mathfrak{u}(q)\oplus\mathfrak{u}(1))$}\\
%        & $ p\geq q\geq 1,\ p>1$ &\\\hline
%        \multirow{2}{*}{\textnormal{\textbf{B}}} & $\mathfrak{su}(n+3),$ &\multirow{2}{*}{$\mathfrak{s}(\mathfrak{u}(n+1)\oplus\mathfrak{u}(2))$}\\
%        & $ n\geq 2$ &\\\hline
%        \multirow{2}{*}{\textnormal{\textbf{C}}} & $\mathfrak{so}(n+2),$ &\multirow{2}{*}{$\mathfrak{so}(n)\oplus\mathbb{R}$}\\
%        &$ n\geq 3$ &\\\hline
%        \multirow{1}{*}{\textnormal{\textbf{D}}} & $\mathfrak{so}(10)$ &\multirow{1}{*}{$\mathfrak{u}(5)$}\\\hline
%        \multirow{1}{*}{\textnormal{\textbf{E}}} & $\mathfrak{e}_6$ &\multirow{1}{*}{$\mathfrak{so}(10)\oplus\mathbb{R}$}\\\hline
%    \end{tabular}
%    \caption{Models for cohomogeneity one actions on complex projective spaces.}
%    \label{table:1}
%\end{table}
\end{theorem}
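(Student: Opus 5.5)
This theorem is a classification result quoted from Takagi's work, so the plan is to reduce it to two facts: the orbit-equivalence reduction \cite[Theorem 3.1]{PT} recalled above, and the classification of compact Hermitian symmetric spaces of rank two.

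\emph{Step 1: reduction to isotropy actions.} By \cite[Theorem 3.1]{PT}, the action of $G$ on $\mathbb{C}P^n$ is orbit equivalent to the projectivized isotropy action of $\tilde K$ on $P(\tilde{\mathfrak p})$ for some Hermitian symmetric pair $(\tilde U,\tilde K)$. Here $\tilde K$ contains the circle generated by $Z$, which acts on $\tilde{\mathfrak p}$ by scalars $e^{i\theta}$. Hence the cohomogeneity of $\tilde K$ on $P(\tilde{\mathfrak p})$ equals the cohomogeneity of $\tilde K$ on the unit sphere of $\tilde{\mathfrak p}$. This is $\operatorname{rank}(\tilde U/\tilde K)-1$, since a maximal abelian subspace $\mathfrak a\subset\tilde{\mathfrak p}$ meets every $\Ad(\tilde K)$-orbit. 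So cohomogeneity one forces rank two.

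\emph{Step 2: Cartan's list.} I then run through the irreducible compact Hermitian symmetric spaces and their ranks.
\begin{itemize}
\item $SU(p+q)/S(U(p)\times U(q))$ has rank $\min(p,q)$, so rank two gives $\mathfrak{su}(n+3)$ with $\mathfrak{s}(\mathfrak u(n+1)\oplus\mathfrak u(2))$, which is Model \textbf{B}.
\item $SO(n+2)/SO(n)\times SO(2)$ always has rank two, which is Model \textbf{C}.
\item $Sp(m)/U(m)$ and $SO(2m)/U(m)$ have rank $m$ and $\lfloor m/2\rfloor$ respectively. Rank two leaves only $Sp(2)/U(2)$ and $SO(8)/U(4)$, $SO(10)/U(5)$.
\item $E_6/Spin(10)U(1)$ has rank two, which is Model \textbf{E}.
\item $E_7/E_6U(1)$ has rank three and is excluded.
\end{itemize}
If the symmetric space is reducible, it must be a product of two rank-one factors $\mathbb{C}P^p\times\mathbb{C}P^q$, which is Model \textbf{A}.

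\emph{Step 3: removing redundancies.} The low-dimensional isomorphisms take care of the extra entries. $Sp(2)/U(2)\cong SO(5)/SO(3)\times SO(2)$ is Model \textbf{C} with $n=3$. $SO(8)/U(4)\cong SO(8)/SO(6)\times SO(2)$ by triality, which is Model \textbf{C} with $n=6$. The small cases of \textbf{B}, \textbf{C} and \textbf{A} that coincide with others or give cohomogeneity zero account for the parameter restrictions in Table \ref{table:1}. For instance, $p=q=1$ in Model \textbf{A} gives $\mathbb{C}P^3$ with $SO(4)$, which is already covered by Model \textbf{C}. What remains is exactly the list of Table \ref{table:1}.

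\emph{Main obstacle.} The delicate point is Step 3, that is, checking the parameter ranges and the coincidences between models. I would handle this by comparing the principal orbit types, or equivalently the restricted root systems ($B_2$ or $BC_2$ with multiplicities), case by case, following \cite{T}.
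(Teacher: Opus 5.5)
The paper gives no argument of its own here. It quotes \cite[Theorem 3.1]{PT} for the reduction to Hermitian symmetric pairs and Takagi \cite{T} for the list. Your Steps 1 and 2 correctly reconstruct where the list comes from. The circle $\exp(\theta Z)$ acts on $\tilde{\mathfrak p}$ by unit scalars, so $\tilde K$-orbits in $P(\tilde{\mathfrak p})$ correspond to $\tilde K$-orbits in the unit sphere. That orbit space is a spherical Weyl chamber of dimension $\operatorname{rank}-1$. The rank-two entries of Cartan's list are as you say, with one small correction: $\SO(n+2)/\SO(n)\times\SO(2)$ is irreducible of rank two only for $n\geq 3$, and for $n=2$ it is $\mathbb{CP}^1\times\mathbb{CP}^1$.

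The problem is Step 3, exactly where you expected it, and your example there is false.

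\begin{itemize}
\item In the parametrization of Table \ref{table:1}, Model \textbf{A} with $p=q=1$ is $\mathbb{CP}^1\times\mathbb{CP}^1$ with $\tilde{\mathfrak p}\cong\mathbb{C}^2$, i.e.\ a circle acting on $\mathbb{CP}^1$, not $\SO(4)$ acting on $\mathbb{CP}^3$.
\item Via $\mathfrak{su}(2)\oplus\mathfrak{su}(2)\cong\mathfrak{so}(4)$ it coincides with Model \textbf{C} for $n=2$, which is also excluded from the table. So it is not ``already covered by Model \textbf{C}''; nothing in Table \ref{table:1} covers it.
\item Hence the theorem is only true under the tacit hypothesis (Takagi's) that the projective space has complex dimension at least $2$. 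You must state this hypothesis rather than claim the table is exhaustive as it stands.
\item Even in the parametrization of Section 3.1, $\mathrm{U}(2)\times\mathrm{U}(2)$ on $\mathbb{CP}^3$ is not orbit equivalent to $\SO(4)$ on $\mathbb{CP}^3$. The former has two singular orbits $\mathbb{CP}^1$; the latter has $\mathbb{RP}^3$ and the quadric $\mathbb{S}^2\times\mathbb{S}^2$.
\item The coincidence that really involves $\SO(4)$ on $\mathbb{CP}^3$ is $\mathrm{SU}(4)/\mathrm{S}(\mathrm{U}(2)\times\mathrm{U}(2))\cong\SO(6)/\SO(4)\times\SO(2)$. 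So Model \textbf{B} with $n=1$ is Model \textbf{C} with $n=4$, which is what justifies $n\geq 2$ in Model \textbf{B}. You never identify this explicitly.
\end{itemize}

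With these corrections the argument closes, together with the identifications you do have right: $\mathrm{Sp}(2)/\mathrm{U}(2)$ is Model \textbf{C} with $n=3$, and $\SO(8)/\mathrm{U}(4)$ is Model \textbf{C} with $n=6$. Also, the theorem only requires every rank-two pair to be isomorphic to some entry of the table, not that the entries be pairwise inequivalent. These isomorphisms of symmetric pairs therefore suffice, and the case-by-case comparison of restricted root systems you planned is unnecessary.
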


In what follows, for each model in Table \ref{table:1}, we fix a representative cohomogeneity one action and explicitly describe the principal and singular orbits and their isotropy representations. For Models {\bf A}-{\bf C}, our exposition closely follows \cite[\S 3]{U}.

\subsection{Model A}  Fix integers $p\geq q\geq 0$ with $p>1.$ Let  \[G_{p,q}:=\mathrm{U}(p+1)\times \mathrm{U}(q+1)\] 
act on $\mathbb{C}P^{p+q+1}=\{[z:w]:z\in\mathbb{C}^{p+1},\ w\in\mathbb{C}^{q+1},\ |z|^2+|w|^2\neq 0\}$ with cohomogeneity one by the formula $$(U,V)\cdot[z:w]:=[Uz:Vw].$$ On $\mathfrak{g}_{p,q}:=\mathfrak{u}(p+1)\oplus\mathfrak{u}(q+1),$ consider  the $\Ad(G_{p,q})$-invariant inner product $(\cdot,\cdot)$ defined by
\begin{align}\label{inner:product:model:A}
    \begin{split}
    \left((Z_1,Z_2),(Z_3,Z_4)\right):=&\frac{\mathrm{Tr}(Y_1Y_3-X_1X_3)+\mathrm{Tr}(Y_2Y_4-X_2X_4)}{2},
    \end{split}
\end{align}
where $Z_i=X_i+\sqrt{-1}Y_i$ and $X_i,Y_i$ have real entries.\\

Define the curve $c:[0,1]\rightarrow\mathbb{C}P^{p+q+1}$ by $$c(t):=[0:\cdots:0:1-t:0:\cdots:0:t],$$
where $1-t$ and $t$ appear in the $(p+1)$-th and $(p+q+2)$-th positions, respectively. For $0<t<1,$ the isotropy group at $c(t)$ is
\begin{align*}
    H&=\left(\mathrm{U}(p+1)\times\mathrm{U}(q+1)\right)_{c(t)}\\
    &=\left\{(U,V)\in\mathrm{U}(p+1)\times\mathrm{U}(q+1):U=\mathrm{diag}(U_1,\lambda),\ V=\mathrm{diag}(V_1,\lambda),\ \lambda\in\mathrm{U}(1)\right\}\\
    &\cong
\mathrm{U}(p)\times\mathrm{U}(q)\times\mathrm{U}(1).
\end{align*}
The isotropy groups at $t=0$ and $t=1$ are
\begin{align*}
     K_0&=\left(\mathrm{U}(p+1)\times\mathrm{U}(q+1)\right)_{c(0)}\\
     &=\left\{(U,V)\in\mathrm{U}(p+1)\times\mathrm{U}(q+1):U=\mathrm{diag}(U_1,\lambda),\ \lambda\in\mathrm{U}(1)\right\}\\
    &\cong \mathrm{U}(p)\times \mathrm{U}(1)\times\mathrm{U}(q+1),
\end{align*}
and
\begin{align*}
     K_1&=\left(\mathrm{U}(p+1)\times\mathrm{U}(q+1)\right)_{c(1)}\\
     &=\left\{(U,V)\in\mathrm{U}(p+1)\times\mathrm{U}(q+1):V=\mathrm{diag}(V_1,\lambda),\ \lambda\in\mathrm{U}(1)\right\}\\
    &\cong \mathrm{U}(p+1)\times\mathrm{U}(q)\times \mathrm{U}(1).
\end{align*}
Thus, the singular orbits are 
$$Q_0=\frac{\mathrm{U}(p+1)\times\mathrm{U}(q+1)}{\mathrm{U}(p)\times\mathrm{U}(1)\times\mathrm{U}(q+1)}\approx\frac{\mathrm{U}(p+1)}{\mathrm{U}(p)\times\mathrm{U}(1)}\approx \mathbb{C}P^p$$
and
$$Q_1=\frac{\mathrm{U}(p+1)\times\mathrm{U}(q+1)}{\mathrm{U}(p+1)\times\mathrm{U}(q)\times\mathrm{U}(1)}\approx\frac{\mathrm{U}(q+1)}{\mathrm{U}(1)\times\mathrm{U}(q)}\approx \mathbb{C}P^q.$$
The Lie algebra of $H$ is $\mathfrak{h}\cong\mathfrak{u}(p)\oplus\mathfrak{u}(q)\oplus\mathbb{R}.$ Its $(\cdot,\cdot)$-orthogonal complement $\mathfrak{m}$ in $\mathfrak{g}_{p,q}$ splits into a sum of three $\Ad(H)$-invariant, irreducible, pairwise $(\cdot,\cdot)$-orthogonal subspaces, namely,
\begin{align}\label{summands:model:A}
    \begin{split}
    &\mathfrak{m}_1=\Span_{\mathbb{R}}\{(\sqrt{-1}E_{p+1,p+1},-\sqrt{-1}F_{q+1,q+1})\},\\
    &\mathfrak{m}_2=\Span_{\mathbb{R}}\{(E_{p+1,j}-E_{j,p+1},0),(\sqrt{-1}(E_{p+1,j}+E_{j,p+1}),0):1\leq j\leq p\},\\
    &\mathfrak{m}_3=\Span_{\mathbb{R}}\{(0,F_{q+1,j}-F_{j,q+1}),(0,\sqrt{-1}(F_{q+1,j}+F_{j,q+1})):1\leq j\leq q\}, 
    \end{split}
\end{align}
where $E_{ij}$ (respectively, $F_{ij}$) denotes the $(p+1)\times(p+1)$ (respectively, $(q+1)\times(q+1)$) matrix with a 1 in the $(i,j)$-entry and zeros elsewhere. Set $$\mathfrak{p}_0:=\mathfrak{m}_1\oplus\mathfrak{m}_3,\ \mathfrak{n}_0:=\mathfrak{m}_2,\ \mathfrak{p}_1:=\mathfrak{m}_1\oplus\mathfrak{m}_2\ \textnormal{and}\ \mathfrak{n}_1:=\mathfrak{m}_3.$$ If $\mathfrak{k}_0$ and $\mathfrak{k}_1$ are the Lie algebras corresponding to $K_0$ and $K_1$ respectively, then  $$\mathfrak{k}_i=\mathfrak{h}\oplus\mathfrak{p}_i,\ (i=0,1)$$ is a reductive $(\cdot,\cdot)$-orthogonal decomposition for the homogeneous space $K_i/H\approx\mathbb{S}^{\ell_i},$ where $\ell_0=2q+1$ and $\ell_1=2p+1.$ Additionally,  $$\mathfrak{g}_{p,q}=\mathfrak{k}_i\oplus\mathfrak{n}_i,\ (i=0,1)$$
is a reductive, $(\cdot,\cdot)$-orthogonal decomposition of $Q_i.$ Let $$\{e_1,\sqrt{-1}e_1,...,e_{p+1},\sqrt{-1}e_{p+1}\}\ \textnormal{and}\ \{f_1,\sqrt{-1}f_1,...,f_{q+1},\sqrt{-1}f_{q+1}\}$$ be the canonical bases for the real spaces $\mathbb{C}^{p+1}$ and $\mathbb{C}^{q+1},$ respectively. Then, the maps $\psi_i:K_i/H\to\mathbb{S}^{\ell_i}$ given by 
\begin{equation}\label{diffeomorphisms:spheres:model:A}
\psi_0((U,V)H):=\lambda^{-1}Vf_{q+1}\ \textnormal{and}\ \psi_1((\tilde{U},\tilde{V})H):=\tilde{\lambda}^{-1}\tilde{U}e_{p+1},
\end{equation}
where $(U,V)\in K_0$ with $U=\mathrm{diag}(U_1,\lambda)$ and
$(\tilde{U},\tilde{V})\in K_1$ with $\tilde{V}=\mathrm{diag}(\tilde{V}_1,\tilde{\lambda}),$
are diffeomorphisms and their derivatives at the identity class are \begin{align}\label{differential:psi_0:model:A}
\begin{split}
    &(d\psi_0)_H\big(\sqrt{-1}E_{p+1,p+1},-\sqrt{-1}F_{q+1,q+1}\big)=-\,2\sqrt{-1}\,f_{q+1},\\
    &(d\psi_0)_H\big(0,F_{q+1,r}-F_{r,q+1}\big)=f_r,\\
    &(d\psi_0)_H\big(0,\sqrt{-1}(F_{q+1,r}+F_{r,q+1})\big)=\sqrt{-1}\,f_r,\ 1\le r\le q,
\end{split}
\end{align}
and
\begin{align}\label{differential:psi_1:model:A}
\begin{split}
&(d\psi_1)_H\big(\sqrt{-1}E_{p+1,p+1},-\sqrt{-1}F_{q+1,q+1}\big)=-\,2\sqrt{-1}\,e_{p+1},\\
&(d\psi_1)_H\big(E_{p+1,j}-E_{j,p+1},0\big)=e_j,\\
&(d\psi_1)_H\big(\sqrt{-1}(E_{p+1,j}+E_{j,p+1}),0\big)=\sqrt{-1}\,e_j,\ 1\le j\le p.
\end{split}
\end{align}

\subsection{Model B}
Fix $n\ge2$ and let $\mathrm{SU}(2)\times \mathrm{SU}(n+1)$ act on $\mathbb{C}P^{2n+1}=P(\mathbb{C}^2\otimes \mathbb{C}^{n+1})$ via the action induced by the tensor product representation
\[
(A,B)\cdot [v\otimes w]:=[Av\otimes Bw],\ (A,B)\in \mathrm{SU}(2)\times \mathrm{SU}(n+1).
\]
On $\mathfrak{su}(2)\oplus \mathfrak{su}(n+1),$ the Killing form $B$ is given by 
\begin{equation*}
    B((X_1,Y_1),(X_2,Y_2)):=4\mathrm{Tr}(X_1X_2)+2(n+1)\mathrm{Tr}(Y_1Y_2).
\end{equation*}
We fix the $\Ad(\mathrm{SU}(2)\times\mathrm{SU}(n+1))$-invariant inner product
\begin{equation}\label{inner:product:model:B}
(\cdot,\cdot):=-B.
\end{equation}

Let $\{e_1,e_2\}$ be the standard basis of $\mathbb{C}^2$, and let 
$\{f_1,\ldots,f_{n+1}\}$ be the standard basis of $\mathbb{C}^{\,n+1}$.
Consider the curve
\[
c:[0,1]\to \mathbb{C}P^{2n+1},\
c(t):=\big[\,t\,e_1\otimes f_{n}+e_2\otimes f_{n+1}\,\big].
\]
For $0<t<1$, the isotropy subgroup is
\begin{align*}
H=&\left(\mathrm{SU}(2)\times\mathrm{SU}(n+1)\right)_{c(t)}\\
=&\left\{\left(\overline{U},\mathrm{diag}(V,\lambda U)\right):U\in\mathrm{S}(\mathrm{U}(1)\times\mathrm{U}(1)),\ V\in\mathrm{U}(n-1),\ \det(V)\lambda^2=1\right\}.
\end{align*}
At the boundary points, the isotropy groups are
\begin{align*}
K_0&=\left(\mathrm{SU}(2)\times \mathrm{SU}(n+1)\right)_{c(0)}\\[2pt]
&=\left\{\left(U,\mathrm{diag}(V,\lambda)\right):U\in\mathrm{S}\left(\mathrm{U}(1)\times\mathrm{U}(1)\right),\ V\in\mathrm{U}(n),\ \det(V)\lambda=1\right\}\\[2pt]
&\cong\mathrm{S}\left(\mathrm{U}(1)\times\mathrm{U}(1)\right)\times \mathrm{S}\left(\mathrm{U}(n)\times \mathrm{U}(1)\right),\\[2pt]
K_1
&=\left(\mathrm{SU}(2)\times \mathrm{SU}(n+1)\right)_{c(1)}\\[2pt]
&=\left\{\left(
\overline{U},\mathrm{diag}(V,\lambda U)\right)\ :U\in\mathrm{SU}(2),\ V\in \mathrm{U}(n-1),\ \det(V)\lambda^2=1\right\}
\end{align*}
and the corresponding singular orbits are
\[
\displaystyle Q_0=\frac{\mathrm{SU}(2)\times\mathrm{SU}(n+1)}{K_0}\approx \mathbb{C}P^{1}\times\mathbb{C}P^{n},\
Q_1=\frac{\mathrm{SU}(2)\times\mathrm{SU}(n+1)}{K_1}.
\]
Let $E_{ij}$ denote the $2\times 2$ matrix with a $1$ in the $(i,j)$-entry and zeros elsewhere, and let
$F_{ij}$ denote the $(n+1)\times (n+1)$ matrix with a $1$ in the $(i,j)$-entry and zeros elsewhere. Consider the following elements of $\mathfrak{su}(2)\oplus \mathfrak{su}(n+1)$:
\begin{align*}
X_{11}&:=\left(\sqrt{-\frac{n+1}{8(n+3)}}\,(E_{11}-E_{22}),\ \sqrt{\frac{-1}{2(n+1)(n+3)}}\,(F_{nn}-F_{n+1,n+1})\right),\\[0.4em]
X_{21}&:=\left(\frac{E_{21}-E_{12}}{2\sqrt{n+3}},\ \frac{F_{n+1,n}-F_{n,n+1}}{2\sqrt{n+3}}\right),\\[0.4em]
X_{22}&:=\left(\frac{\sqrt{-1}(E_{21}+E_{12})}{2\sqrt{n+3}},\ -\,\frac{\sqrt{-1}(F_{n+1,n}+F_{n,n+1})}{2\sqrt{n+3}}\right),\\[0.4em]
X_{31}&:=\left(\sqrt{\frac{n+1}{8(n+3)}}\,(E_{21}-E_{12}),\ -\,\sqrt{\frac{1}{2(n+1)(n+3)}}\,(F_{n+1,n}-F_{n,n+1})\right),\\[0.4em]
X_{32}&:=\left(\sqrt{-\frac{n+1}{8(n+3)}}\,(E_{21}+E_{12}),\ \sqrt{\frac{-1}{2(n+1)(n+3)}}\,(F_{n+1,n}+F_{n,n+1})\right),
\end{align*}
and, for each $j=1,\dots,n-1$,
\begin{align*}
X_{4j}&:=\left(0,\ \frac{F_{n j}-F_{j n}}{2\sqrt{n+1}}\right),
\
Y_{4j}:=\left(0,\ \frac{\sqrt{-1}\,(F_{n j}+F_{j n})}{2\sqrt{n+1}}\right),\\[0.4em]
X_{5j}&:=\left(0,\ \frac{F_{n+1,j}-F_{j,n+1}}{2\sqrt{n+1}}\right),
\
Y_{5j}:=\left(0,\ \frac{\sqrt{-1}\,(F_{n+1,j}+F_{j,n+1})}{2\sqrt{n+1}}\right).
\end{align*}

With respect to the inner product \eqref{inner:product:model:B}, the Lie algebra
$\mathfrak{su}(2)\oplus \mathfrak{su}(n+1)$ admits a reductive $(\cdot,\cdot)$-orthogonal decomposition
\begin{equation*}
\mathfrak{su}(2)\oplus\mathfrak{su}(n+1)
=\mathfrak{h}\oplus\mathfrak{m}_1\oplus\mathfrak{m}_2\oplus\ \mathfrak{m}_3\oplus\mathfrak{m}_4\oplus\mathfrak{m}_5,
\end{equation*}
where $\mathfrak{h}$ is the Lie algebra of $H$ and
\begin{align}\label{reductive:decomposition:explicit:model:B}
\begin{split}
\mathfrak{m}_1&=\Span_{\mathbb{R}}\{X_{11}\},\\
\mathfrak{m}_2&=\Span_{\mathbb{R}}\{X_{21},X_{22}\},\\
\mathfrak{m}_3&=\Span_{\mathbb{R}}\{X_{31},X_{32}\},\\
\mathfrak{m}_4&=\Span_{\mathbb{R}}\{X_{4j},Y_{4j}:j=1,\dots,n-1\},\\
\mathfrak{m}_5&=\Span_{\mathbb{R}}\{X_{5j},Y_{5j}:j=1,\dots,n-1\}.
\end{split}
\end{align}
Let
\[
\mathfrak{p}_0=\mathfrak{m}_1\oplus\mathfrak{m}_4,\
\mathfrak{n}_0=\mathfrak{m}_2\oplus\mathfrak{m}_3\oplus\mathfrak{m}_5,\
\mathfrak{p}_1=\mathfrak{m}_2,\
\mathfrak{n}_1=\mathfrak{m}_1\oplus\mathfrak{m}_3\oplus\mathfrak{m}_4\oplus\mathfrak{m}_5.
\]
If $\mathfrak{h}$ and $\mathfrak{k}_i$ $(i=0,1)$ denote the Lie algebras of $H$ and $K_i$, respectively, then
\[
\mathfrak{k}_i=\mathfrak{h}\oplus\mathfrak{p}_i,
\ \mathfrak{su}(2)\oplus\mathfrak{su}(n+1)=\mathfrak{k}_i\oplus\mathfrak{n}_i,
\ i=0,1.
\]
Moreover, the decomposition $\mathfrak{k}_i=\mathfrak{h}\oplus \mathfrak{p}_i$ is reductive for the homogeneous space
$K_i/H\approx \mathbb{S}^{\ell_i}$, where $\ell_0=2n-1$ and $\ell_1=2$. In this model, there are diffeomorphisms
$\psi_i:K_i/H\to\mathbb{S}^{\ell_i}$ given by
\begin{align*}
\psi_0\left(\left(\mathrm{diag}(\alpha,\alpha^{-1}),\mathrm{diag}(V,\lambda)\right)H\right)
&:=\frac{\alpha^2}{\lambda}\,V\tilde{e}_n,
\end{align*}
and
\begin{align*}
\psi_1\left(\left(\overline{U},\mathrm{diag}(V,\lambda U)\right)H\right)
&:=\frac{\sqrt{-1}}{2\sqrt{2}}U\left(E_{11}-E_{22}\right)U^{-1},
\end{align*}
where $\{\tilde{e}_1,...,\tilde{e}_n\}$ is the standard basis of $\mathbb{C}^n,$
\begin{align*}
&\mathbb{S}^{2n-1}=\left\{(w_1,\dots,w_n)\in\mathbb{C}^{n}:\ |w_1|^2+\cdots+|w_n|^2=1\right\},\ \textnormal{and}\\
&\mathbb{S}^2=\left\{X\in\mathfrak{su}(2):\ -\mathrm{Tr}(X^2)=\frac{1}{4}\right\}.
\end{align*}
The derivatives of $\psi_0,\ \psi_1$ at the identity class are given by
\begin{align}\label{derivatives:psi0:model:B}
\begin{split}
(d\psi_0)_H(X_{11})
&=\sqrt{\frac{n+3}{2(n+1)}}\,\sqrt{-1}\,\tilde{e}_n,\\[0.4em]
(d\psi_0)_H(X_{4j})
&=-\frac{1}{2\sqrt{n+1}}\,\tilde{e}_j,\\[0.4em]
(d\psi_0)_H(Y_{4j})
&=\frac{\sqrt{-1}}{2\sqrt{n+1}}\,\tilde{e}_j,
\ 1\le j\le n-1,
\end{split}
\end{align}
and
\begin{align}\label{derivatives:psi1:model:B}
\begin{split}
&(d\psi_1)_H(X_{21})=\frac{\sqrt{-1}}{2\sqrt{2(n+3)}}(E_{21}+E_{12}),\\[0.5em]
&(d\psi_1)_H(X_{22})
=-\frac{1}{2\sqrt{2(n+3)}}(E_{21}-E_{12}).
\end{split}
\end{align}

\subsection{Model C} For any natural number $n\geq 2,$ the special orthogonal group $\SO(n+1)$ acts with cohomogeneity one on the complex projective space $\mathbb{C}P^n.$ The action is defined as $U\cdot[z]=\left[Uz\right],$ where $U\in\SO(n+1),$ $z\in\mathbb{C}^{n+1}-\{0\}$ and $[z]$ denotes the class of $z$ in $\mathbb{C}P^{n}.$ Let $c:[0,1]\rightarrow \mathbb{C}P^{n}$ be the curve defined by  $$c(t):=\left[0:\cdots:0:1-t:\sqrt{-1}\right]$$ and fix the $\Ad(\SO(n+1))$-invariant inner product $(\cdot,\cdot)$ on $\mathfrak{so}(n+1)$ given by \begin{equation}\label{inner:product:model:C}
    (X,Y):=-2\mathrm{Tr}(XY),\ X,Y\in\mathfrak{so}(n+1).
\end{equation}
The isotropy group of $c(t)$ is 
\begin{align*}
&\begin{aligned}
    H:=\SO(n+1)_{c(t)}&=\left\{\left(\begin{array}{cc}
       V  & 0 \\
       0  & \epsilon\,\mathrm{Id}_2
    \end{array}\right)\in\SO(n+1):V\in\SO(n-1)\ \textnormal{and}\ \epsilon=\pm 1\right\}\\[0.2em]
    &\cong \SO(n-1)\times \mathbb{Z}_2,\ \textnormal{if}\ 0<t<1,
\end{aligned}\\
&\begin{aligned}
     K_0:=\SO(n+1)_{c(0)}&=\left\{\left(\begin{array}{cc}
       V_1  & 0 \\
       0  & V_2
    \end{array}\right)\in\SO(n+1):V_1\in\SO(n-1)\ \textnormal{and}\ V_2\in\SO(2)\right\}\\[0.2em]
    &\cong \SO(n-1)\times\SO(2),
\end{aligned}\\
&\begin{aligned}
     K_1:=\SO(n+1)_{c(1)}&=\left\{\left(\begin{array}{cc}
       V  & 0 \\
       0  & \epsilon
    \end{array}\right)\in\SO(n+1):V\in\mathrm{O}(n)\ \textnormal{and}\ \epsilon=\det(V)\right\}\\[0.2em]
    &\cong \mathrm{S}(\mathrm{O}(n)\times \mathrm{O}(1)).
\end{aligned}
\end{align*}

Their corresponding Lie algebras are 
\begin{align*}
    &\mathfrak{h}=\Span_{\mathbb{R}}\{E_{ij}-E_{ji}:1\leq j<i\leq n-1\}\cong \mathfrak{so}(n-1),\\
    &\mathfrak{k}_0=\Span_{\mathbb{R}}\{E_{ij}-E_{ji}:1\leq j<i\leq n-1\}\cup\{E_{n+1,n}-E_{n,n+1}\}\cong \mathfrak{so}(n-1)\oplus\mathfrak{so}(2),\\
    &\mathfrak{k}_1=\Span_{\mathbb{R}}\{E_{ij}-E_{ji}:1\leq j<i\leq n\}\cong \mathfrak{so}(n),
\end{align*}
where $E_{ij}$ denotes the $(n+1)\times(n+1)$ matrix with 1 in the $(i,j)$-entry and zeros elsewhere. A principal orbit $P$ is diffeomorphic to the homogeneous space $\frac{\SO(n+1)}{\SO(n-1)\times \mathbb{Z}_2}$ and its isotropy representation decomposes into the three irreducible, pairwise $(\cdot,\cdot)$-orthogonal summands 
\begin{align}\label{summands:model:C}
    \begin{split}
    &\mathfrak{m}_1=\Span_{\mathbb{R}}\left\{E_{n+1,n}-E_{n,n+1}\right\},\\
    &\mathfrak{m}_2=\Span_{\mathbb{R}}\left\{E_{nj}-E_{jn}:1\leq j\leq n-1\right\},\\
    &\mathfrak{m}_3=\Span_{\mathbb{R}}\left\{E_{n+1,j}-E_{j,n+1}:1\leq j\leq n-1\right\}.
    \end{split}
\end{align}
The singular orbits are 
\begin{equation}\label{singular:orbits:model:C}
    Q_1=\frac{\SO(n+1)}{\mathrm{S}\left(\mathrm{O}(n)\times\mathrm{O}(1)\right)}\approx \mathbb{R}P^n\ \textnormal{and}\ Q_0=\frac{\SO(n+1)}{\SO(n-1)\times\SO(2)}
\end{equation}
and their corresponding $(\cdot,\cdot)$-orthogonal reductive decompositions are
$$\mathfrak{so}(n+1)=\mathfrak{k}_{i}\oplus\mathfrak{n}_{i},\ i=0,1,$$
where $\mathfrak{n}_1=\mathfrak{m}_1\oplus\mathfrak{m}_3$ and $\mathfrak{n}_0=\mathfrak{m}_2\oplus\mathfrak{m}_3.$ If we denote $\mathfrak{p}_1=\mathfrak{m}_2$ and $\mathfrak{p}_0=\mathfrak{m}_1$ then 
\begin{align*}    \mathfrak{k}_{i}=\mathfrak{h}\oplus\mathfrak{p}_{i}=\mathfrak{so}(n-1)\oplus\mathfrak{p}_{i},\ i=0,1,
\end{align*} is also a reductive $(\cdot,\cdot)$-orthogonal decomposition. The quotient $K_{i}/H,\ i=0,1$ is diffeomorphic to the sphere $\mathbb{S}^{\ell_{i}},$ where $\ell_0=1$ and $\ell_1=n-1.$ The map $$\SO(2)\ni\left(\begin{array}{cc}\cos \theta & -\sin \theta\\ \sin \theta & \cos \theta\end{array}\right)\longmapsto (\cos(2\theta),\sin(2\theta))\in \mathbb{S}^1$$
induces a diffeomorphism $\psi_0$ from $K_0/H=\frac{\SO(n-1)\times \SO(2)}{\SO(n-1)\times\mathbb{Z}_2}\approx \frac{\SO(2)}{\mathbb{Z}_2}$ to $\mathbb{S}^1$ such that \begin{equation}\label{derivative:psi0:model:C}
    \left(d\psi_0\right)_H\left(E_{n+1,n}-E_{n,n+1}\right)=(0,2).
\end{equation}

On the other hand, a diffeomorphism between $
K_1/H=\frac{\mathrm{S}\left(\mathrm{O}(n)\times \mathrm{O}(1)\right)}{\SO(n-1)\times \mathbb{Z}_2}$ and $\mathbb{S}^{n-1}$ is given by 
\begin{equation*}
    \psi_1:\frac{\mathrm{S}\left(\mathrm{O}(n)\times \mathrm{O}(1)\right)}{\textnormal{SO}(n-1)\times \mathbb{Z}_2}\ni\mathrm{diag}(V,\epsilon)\longmapsto\epsilon Ve_n\in\mathbb{S}^{n-1},
\end{equation*}
where $\{e_1,\dots,e_n\}$ is the standard basis of $\mathbb{R}^n.$ Its derivative at the identity class satisfies
\begin{equation}\label{derivative:psi1:model:C}
    \left(d\psi_1\right)_H\left(E_{nj}-E_{jn}\right)=-e_j\in T_{e_n}\mathbb{S}^{n-1},\ j=1,...,n-1.   
\end{equation}

\subsection{Model D}
Consider the action of $\mathrm{U}(5)$ on $\mathbb{C}^{10}\cong \Lambda^2\mathbb{C}^5$ given by
\[
U\cdot(v\wedge w):=(Uv)\wedge(Uw),\ U\in\mathrm{U}(5).
\]
It induces a cohomogeneity one action on the projective space $\mathbb{C}P^9=P(\mathbb{C}^{10}),$ namely
\[
U\cdot[v\wedge w]=[(Uv)\wedge(Uw)].
\]
Let $\{e_1,\dots,e_5\}$ be the standard basis of $\mathbb{C}^5$, and consider the curve $c:[0,1]\to\mathbb{C}P^9$ defined by
\[
c(t):=[e_1\wedge e_2+t\,e_3\wedge e_4].
\]

Fix the $\Ad(\mathrm{U}(5))$-invariant inner product $(\cdot,\cdot)$ on $\mathfrak{u}(5)$ given by
\begin{equation}\label{inner:product:model:D}
    (X,Y):=-\mathrm{Tr}(XY),\ X,Y\in\mathfrak{u}(5).
\end{equation}
For $0<t<1$, the isotropy group of $c(t)$ is
\[
H:=\mathrm{U}(5)_{c(t)}=\left\{\mathrm{diag}(V_1,V_2,\lambda):V_1,V_2\in\mathrm{U}(2),\ \lambda\in\mathrm{U}(1),\ \det(V_1)=\det(V_2)\right\}.
\]
The singular isotropy groups are
\begin{align*}
    K_0&:=\mathrm{U}(5)_{c(0)}=\left\{\mathrm{diag}(V_1,V_2):V_1\in\mathrm{U}(2),\ V_2\in\mathrm{U}(3)\right\}\cong \mathrm{U}(2)\times\mathrm{U}(3),\\[0.5em]
    K_1&:=\mathrm{U}(5)_{c(1)}=\left\{\mathrm{diag}(V,\lambda):\lambda\in\mathrm{U}(1),\ V\in\mathrm{U}(4),\ VJ_4V^T\in\mathrm{U}(1)\cdot J_4\right\}\\[0.2em]
    &\,\,\cong\left(\mathrm{U}(1)\cdot\mathrm{Sp}(2)\right)\times\mathrm{U}(1),
\end{align*}
where
\[
J_4=\left(\begin{array}{cccc}0&1&0&0\\-1&0&0&0\\0&0&0&1\\0&0&-1&0\end{array}\right).
\]
The singular orbits are
\[
Q_0=\frac{\mathrm{U}(5)}{\mathrm{U}(2)\times\mathrm{U}(3)}\approx\mathrm{Gr}_2(\mathbb{C}^5),
\ Q_1=\frac{\mathrm{U}(5)}{\left(\mathrm{U}(1)\cdot\mathrm{Sp}(2)\right)\times\mathrm{U}(1)}.
\]

The Lie algebras of the isotropy groups are
\begin{align*}
    \mathfrak{h}&=\left\{\mathrm{diag}(X,Y,\sqrt{-1}s):X,Y\in\mathfrak{u}(2),\ s\in\mathbb{R},\ \mathrm{Tr}(X)=\mathrm{Tr}(Y)\right\},\\[0.5em]
    \mathfrak{k}_0&=\left\{\mathrm{diag}(X,Y):X\in\mathfrak{u}(2),\ Y\in\mathfrak{u}(3)\right\}\cong\mathfrak{u}(2)\oplus\mathfrak{u}(3),\\[0.5em]
    \mathfrak{k}_1&=\left\{\mathrm{diag}(X,\sqrt{-1}s):X\in\mathfrak{u}(4),\ s\in\mathbb{R},\ XJ_4+J_4X^T\in\sqrt{-1}\mathbb{R}J_4\right\}\cong \mathfrak{u}(1)\oplus\mathfrak{sp}(2)\oplus\mathfrak{u}(1).
\end{align*}

Let
\[
J_2=\left(\begin{array}{cc}
     0 & 1 \\
     -1 & 0
\end{array}\right),
\]
and let $E_{ij}$ denote the complex $5\times 5$ matrix with $1$ in the $(i,j)$-entry and zeros elsewhere. For a square complex matrix $A$, denote by $A^*$ its Hermitian adjoint, i.e. $A^*=\overline{A}^{\,T}$. A reductive $(\cdot,\cdot)$-orthogonal decomposition of the principal orbit $P=\mathrm{U}(5)/H$ is
\[
\mathfrak{u}(5)=\mathfrak{h}\oplus\left(\mathfrak{m}_1\oplus\mathfrak{m}_2\oplus\mathfrak{m}_3\oplus\mathfrak{m}_4\oplus\mathfrak{m}_5\right),
\]
where
\begin{align}\label{isotropy:summands:model:D}
    \begin{split}
       &\mathfrak{m}_1=\Span_\mathbb{R}\left\{\frac{\sqrt{-1}}{2}(E_{11}+E_{22}-E_{33}-E_{44})\right\},\\[0.2em]
       &\mathfrak{m}_2=\Span_\mathbb{R}\left\{\left(\begin{array}{ccc}0&Z&0\\-Z^*&0&0\\0&0&0\end{array}\right):ZJ_2=J_2\overline{Z}\right\},\\[0.2em]
       &\mathfrak{m}_3=\Span_\mathbb{R}\left\{\left(\begin{array}{ccc}0&Z&0\\-Z^*&0&0\\0&0&0\end{array}\right):ZJ_2=-J_2\overline{Z}\right\},\\[0.2em]
       &\mathfrak{m}_4=\Span_\mathbb{R}\left\{E_{5i}-E_{i5},\sqrt{-1}(E_{5i}+E_{i5}):i\in\{1,2\}\right\},\\[0.2em]
       &\mathfrak{m}_5=\Span_\mathbb{R}\left\{E_{5i}-E_{i5},\sqrt{-1}(E_{5i}+E_{i5}):i\in\{3,4\}\right\}.
    \end{split}
\end{align}
Setting
\[
\mathfrak{p}_0=\mathfrak{m}_1\oplus\mathfrak{m}_5,\ \mathfrak{n}_0=\mathfrak{m}_2\oplus\mathfrak{m}_3\oplus\mathfrak{m}_4,\ \mathfrak{p}_1=\mathfrak{m}_2,\ \mathfrak{n}_1=\mathfrak{m}_1\oplus\mathfrak{m}_3\oplus\mathfrak{m}_4\oplus\mathfrak{m}_5,
\]
we obtain the reductive decompositions
\[
\mathfrak{u}(5)=\mathfrak{k}_i\oplus\mathfrak{n}_i,\ \mathfrak{k}_i=\mathfrak{h}\oplus\mathfrak{p}_i,\ i=0,1.
\]
In particular, $K_i/H\approx \mathbb{S}^{\ell_i}$ for $i=0,1$, where $\ell_0=5$ and $\ell_1=4$.\\

To define a diffeomorphism $\psi_0:K_0/H\to\mathbb{S}^5$, let $\{f_1,f_2,f_3\}$ be the standard basis of $\mathbb{C}^3$ and consider
\[
\Lambda^2\mathbb{C}^3=\Span_\mathbb{C}\left\{f_1\wedge f_2,\ f_1\wedge f_3,\ f_2\wedge f_3\right\},
\]
which contains the five-dimensional sphere
\[
\mathbb{S}^5=\left\{z_{12}(f_1\wedge f_2)+z_{13}(f_1\wedge f_3)+z_{23}(f_2\wedge f_3):\ |z_{12}|^2+|z_{13}|^2+|z_{23}|^2=1\right\}.
\]
Define $\psi_0:K_0/H\to\mathbb{S}^5$ by
\[
\psi_0(\mathrm{diag}(V_1,V_2)H):=\det(V_1)^{-1}(V_2f_1)\wedge(V_2f_2).
\]
Its derivative at the identity class is given by
\begin{align}\label{derivatives:psi_0:model:D}
    \begin{split}
    &(d\psi_0)_H\left(\frac{\sqrt{-1}}{2}(E_{11}+E_{22}-E_{33}-E_{44})\right)=-2\sqrt{-1}\,f_1\wedge f_2,\\[0.2em]
    &(d\psi_0)_H\left(E_{54}-E_{45}\right)=f_1\wedge f_3,\ (d\psi_0)_H(\sqrt{-1}(E_{54}+E_{45}))=\sqrt{-1}\,f_1\wedge f_3,\\[0.2em]
    &(d\psi_0)_H(E_{53}-E_{35})=-f_2\wedge f_3,\ (d\psi_0)_H(\sqrt{-1}(E_{53}+E_{35}))=-\sqrt{-1}\,f_2\wedge f_3.
    \end{split}
\end{align}

For $K_1/H\approx\mathbb{S}^{4}$, let $\{\tilde{f}_1,\tilde{f}_2\}$ be the standard basis of $\mathbb{C}^2$. Given $\mathrm{diag}(V,\lambda)\in K_1$, we have
\[
VJ_4V^T=wJ_4
\]
for some $w\in\mathrm{U}(1)$. Choose $z\in\mathrm{U}(1)$ with $z^2=w$ and set $S:=z^{-1}V$, so that
\[
S\in\mathrm{Sp}(2)=\{A\in\mathrm{U}(4):AJ_4A^T=J_4\}.
\]
Write
\[
S=\left(\begin{array}{cc}A &B\\ C& D\end{array}\right),\ A,B,C,D\in M_{2\times2}(\mathbb{C}),
\]
and set
\[
u:=A\tilde{f}_1=\left(\begin{array}{c}u_1\\u_2\end{array}\right),\
v:=C\tilde{f}_1=\left(\begin{array}{c}v_1\\v_2\end{array}\right)\in\mathbb{C}^2
\]
the components of the first column of $S$. Then a diffeomorphism $\psi_1:K_1/H\to\mathbb{S}^4$ is given by
\[
\psi_1(\mathrm{diag}(V,\lambda)H):=(x_0,x_1,x_2,x_3,x_4),
\]
where
\begin{align*}
&x_0=|u_1|^2+|u_2|^2-|v_1|^2-|v_2|^2,\\
&x_1=-2\,\mathrm{Re}(u_1\overline{v_1}+u_2\overline{v_2}),\\
&x_2=-2\,\mathrm{Im}(u_1\overline{v_1}+u_2\overline{v_2}),\\
&x_3=-2\,\mathrm{Re}(u_1v_2-u_2v_1),\\
&x_4=-2\,\mathrm{Im}(u_1v_2-u_2v_1).
\end{align*}
Moreover, the derivative of $\psi_1$ at the identity class is
\begin{align}\label{derivative:psi_1:model:D}
    \begin{split}
        &(d\psi_1)_H\left(E_{13}+E_{24}-E_{31}-E_{42}\right)=2(0,1,0,0,0),\\[0.2em]
        &(d\psi_1)_H\left(\sqrt{-1}(E_{13}-E_{24}+E_{31}-E_{42})\right)=2(0,0,1,0,0),\\[0.2em]
        &(d\psi_1)_H\left(E_{14}-E_{23}+E_{32}-E_{41}\right)=2(0,0,0,1,0),\\[0.2em]
        &(d\psi_1)_H\left(\sqrt{-1}(E_{14}+E_{23}+E_{32}+E_{41})\right)=-2(0,0,0,0,1).
    \end{split}
\end{align}
\subsection{Model E}
Consider the ten-dimensional Euclidean space $\mathbb{R}^{10}$ endowed with the standard inner product $q$ and let
$\{e_1,\dots,e_{10}\}$ be its standard basis. Extending scalars, we regard $q$ as a complex-bilinear form on the complexification
\[
(\mathbb{R}^{10})_{\mathbb{C}}=\mathbb{R}^{10}\otimes_{\mathbb{R}}\mathbb{C}\cong\mathbb{C}^{10}
=\Span_{\mathbb{C}}\{e_1,\dots,e_{10}\}.
\]
Define
\begin{align*}
&u_{k}:=\frac{1}{\sqrt{2}}\left(e_{2k-1}-\sqrt{-1}\,e_{2k}\right),\ v_k:=\frac{1}{\sqrt{2}}\left(e_{2k-1}+\sqrt{-1}\,e_{2k}\right),\ k=1,2,3,4,5.\\[0.2em]
&W:=\Span_{\mathbb{C}}\{u_1,u_2,u_3,u_4,u_5\},\ V:=\Span_{\mathbb{C}}\{v_1,v_2,v_3,v_4,v_5\},
\end{align*}
so that $\mathbb{C}^{10}=W\oplus V$. Equivalently,
\[
W=\ker(J_0-\sqrt{-1}\,\mathrm{Id}_{\mathbb{C}^{10}}),\
V=\ker(J_0+\sqrt{-1}\,\mathrm{Id}_{\mathbb{C}^{10}}),
\]
where $J_0e_{2k-1}=e_{2k}$ and $J_0e_{2k}=-e_{2k-1}$ for $k=1,2,3,4,5$. Denote by $Cl(\mathbb{C}^{10})$ the complex Clifford algebra associated with $(\mathbb{C}^{10},q)$. The algebra
$Cl(\mathbb{C}^{10})$ acts on the exterior algebra
\[
\Lambda^*W=\bigoplus_{r=0}^5\Lambda^rW
\]
as follows. For $k\in\{1,2,3,4,5\}$, define
\begin{align*}
&\varepsilon_k:\Lambda^*W\to\Lambda^*W,\ \varepsilon_k(\omega):=u_k\wedge \omega,\\
&\iota_k:\Lambda^*W\to\Lambda^*W,\ \iota_k(u_{i_1}\wedge\cdots\wedge u_{i_r})
:=\sum_{m=1}^r(-1)^{m-1}\delta_k^{i_m}\,u_{i_1}\wedge\cdots\wedge \widehat{u_{i_m}}\wedge\cdots\wedge u_{i_r},\\
&\rho(e_{2k-1}):=\varepsilon_k+\iota_k,\ \rho(e_{2k}):=\sqrt{-1}(\varepsilon_k-\iota_k),
\end{align*}
where $\widehat{u_{i_m}}$ indicates that the factor $u_{i_m}$ is omitted. For a generic element
\[
g=\sum_{I}a_I\,e_{i_1}\cdots e_{i_s}\in Cl(\mathbb{C}^{10}),
\]
with $I=(i_1,...,i_s),\ 1\leq i_1<\cdots<i_s\leq 10,\ 0\leq s\leq 10,$ define
\[
\rho(g):=\sum_{I}a_I\,\rho(e_{i_1})\circ\cdots\circ\rho(e_{i_s}).
\]
This yields a representation \(\rho:Cl(\mathbb{C}^{10})\to\mathrm{GL}(\Lambda^*W).\) The group $\mathrm{Spin}(10)$ is given by
\[
\mathrm{Spin}(10)=\{\,f_1\cdots f_{2k}: f_i\in\mathbb{R}^{10},\ q(f_i,f_i)=1,\ i=1,\dots,2k,\ k\ge 0\,\},
\]
which is a subgroup of $Cl(\mathbb{R}^{10})^\times\subseteq Cl(\mathbb{C}^{10})^\times.$ For each $g\in\mathrm{Spin}(10)$, the representation preserves the even and odd parts of $\Lambda^*W$, that is,
\begin{align*}
&\rho(g)\left(\Lambda^0W\oplus\Lambda^2W\oplus\Lambda^4W\right)\subseteq \Lambda^0W\oplus\Lambda^2W\oplus\Lambda^4W,\\
&\rho(g)\left(\Lambda^1W\oplus\Lambda^3W\oplus\Lambda^5W\right)\subseteq \Lambda^1W\oplus\Lambda^3W\oplus\Lambda^5W.
\end{align*}
Identifying $\mathbb{C}^{16}\cong\Lambda^0W\oplus\Lambda^2W\oplus\Lambda^4W$, we obtain a representation
\begin{equation}\label{rho:model:E}
\rho:\mathrm{Spin}(10)\to\mathrm{GL}(\mathbb{C}^{16}).
\end{equation}
Moreover, if we equip $\mathbb{C}^{16}=\Lambda^0W\oplus\Lambda^2W\oplus\Lambda^4W$ with the Hermitian product induced by the Hermitian product $\langle\cdot,\cdot\rangle_W$ on $W$, for which $\langle u_k,u_\ell\rangle_W=\delta_k^\ell,$ then $\rho(g)\in\mathrm{SU}(16),$ for all $g\in\mathrm{Spin}(10).$ This fact allows us to define a cohomogeneity one action of $\mathrm{Spin}(10)$ on
$\mathbb{C}P^{15}=P(\mathbb{C}^{16})=P(\Lambda^0W\oplus\Lambda^2W\oplus\Lambda^4W)$ by
\[
g\cdot[\omega]:=[\rho(g)\omega].
\]
Let $c:[0,1]\to\mathbb{C}P^{15}$ be the curve
$$c(t)=[1+t\,u_1\wedge u_2\wedge u_3\wedge u_4]$$ 
and let $(\cdot,\cdot)$ be the $\Ad(\mathrm{Spin}(10))$-invariant inner product on $\mathfrak{spin}(10)\cong\mathfrak{so}(10)$ given by 
\begin{equation}\label{inner:product:model:E}
    (X,Y):=-\frac{1}{8}\mathrm{Tr}(XY),\ X,Y\in\mathfrak{so}(10).
\end{equation}
For $0<t<1$, the isotropy group of $c(t)$ is
\[
H=(\mathrm{Spin}(10))_{c(t)}\cong \mathrm{SU}(4)\cdot\mathrm{U}(1)
\]
and the singular isotropy groups are
\[
K_0=(\mathrm{Spin}(10))_{c(0)}\cong \mathrm{SU}(5)\cdot \mathrm{U}(1),
\ K_1=(\mathrm{Spin}(10))_{c(1)}\cong \mathrm{Spin}(7)\cdot\mathrm{U}(1),
\]
with corresponding Lie algebras
\[
\mathfrak{h}\cong \mathfrak{su}(4)\oplus\mathfrak{u}(1)\subseteq\mathfrak{so}(10),\ \mathfrak{k}_0\cong\mathfrak{su}(5)\oplus\mathfrak{u}(1)\subseteq\mathfrak{so}(10),\ \mathfrak{k}_1\cong\mathfrak{spin}(7)\oplus\mathfrak{u}(1)\subseteq\mathfrak{so}(10).
\]

Next we describe explicitly the Lie algebras $\mathfrak{h}$, $\mathfrak{k}_0$, and $\mathfrak{k}_1$ as subspaces of $\mathfrak{so}(10)$.
Let $E_{ij}$ be the $10\times 10$ matrix with $1$ in the $(i,j)$-entry and zeros elsewhere, and set $X_{ij}:=E_{ij}-E_{ji}$. Then
\begin{align*}
&\begin{aligned}
\mathfrak{h}
=\Span_{\mathbb{R}}\{&
X_{2i-1,2j-1}+X_{2i,2j},\ X_{2i-1,2j}-X_{2i,2j-1},\ X_{2k-1,2k}-X_{2k+1,2k+2},\ X_{10,9}:\\
&1\leq j<i\leq 4,\ k=1,2,3\},
\end{aligned}\\
&\begin{aligned}
\mathfrak{k}_0
=\Span_{\mathbb{R}}\{&
X_{2i-1,2j-1}+X_{2i,2j},\ X_{2i-1,2j}-X_{2i,2j-1},\ X_{2k,2k-1}:
1\leq j<i\leq 5,\ k=1,\dots,5\},
\end{aligned}\\
&\begin{aligned}
\mathfrak{k}_1
=\mathfrak{h}\oplus \Span_{\mathbb{R}}\{&
X_{53}-X_{71},\ X_{51}+X_{73},\ X_{75}-X_{31},\ X_{63}+X_{81},\ X_{61}-X_{83},\ X_{41}+X_{85}\}.
\end{aligned}
\end{align*}

For the principal orbit $P=\mathrm{Spin}(10)/(\mathrm{SU}(4)\cdot \mathrm{U}(1))$, we have the reductive $(\cdot,\cdot)$-orthogonal decomposition
\[
\mathfrak{so}(10)=\mathfrak{h}\oplus\mathfrak{m}_1\oplus\mathfrak{m}_2\oplus\mathfrak{m}_3\oplus\mathfrak{m}_4\oplus\mathfrak{m}_5,
\]
where
\begin{align}\label{summands:model:E}
\begin{split}\mathfrak{m}_1&=\Span_\mathbb{R}\left\{\sum_{k=1}^{4}X_{2k,2k-1}\right\},\\[0.2em]
\mathfrak{m}_2&=\Span_\mathbb{R}\left\{X_{10,2k}+X_{9,2k-1},\ X_{10,2k-1}-X_{9,2k}:k=1,2,3,4\right\},\\
\mathfrak{m}_3&=\Span_\mathbb{R}\left\{X_{9,2k-1}-X_{10,2k},\ X_{10,2k-1}+X_{9,2k}:k=1,2,3,4\right\},\\
\mathfrak{m}_4&=\Span_\mathbb{R}\{Y_1,Y_2,Y_3,Y_4,Y_5,Y_6\},\\
\mathfrak{m}_5&=\Span_\mathbb{R}\{Z_1,Z_2,Z_3,Z_4,Z_5,Z_6\},
\end{split}
\end{align}
and
\[
\begin{aligned}
Y_1&:=X_{53}-X_{64}-X_{71}+X_{82},\\
Y_2&:=X_{51}-X_{62}+X_{73}-X_{84},\\
Y_3&:=X_{42}-X_{31}+X_{75}-X_{86},\\
Y_4&:=X_{54}+X_{63}+X_{72}+X_{81},\\
Y_5&:=X_{52}+X_{61}-X_{74}-X_{83},\\
Y_6&:=X_{32}+X_{41}+X_{76}+X_{85},
\end{aligned}
\qquad
\begin{aligned}
Z_1&:=X_{54}+X_{63}-X_{72}-X_{81},\\
Z_2&:=X_{52}+X_{61}+X_{74}+X_{83},\\
Z_3&:=X_{76}+X_{85}-X_{32}-X_{41},\\
Z_4&:=X_{64}-X_{53}-X_{71}+X_{82},\\
Z_5&:=X_{62}-X_{51}+X_{73}-X_{84},\\
Z_6&:=X_{42}-X_{31}-X_{75}+X_{86}.
\end{aligned}
\]

The singular orbits are the homogeneous spaces
\[
Q_0=\frac{\mathrm{Spin}(10)}{\mathrm{SU}(5)\cdot\mathrm{U}(1)},
\ Q_1=\frac{\mathrm{Spin}(10)}{\mathrm{Spin}(7)\cdot \mathrm{U}(1)}.
\]
Set
\[
\mathfrak{p}_0:=\mathfrak{m}_1\oplus\mathfrak{m}_2,\
\mathfrak{n}_0:=\mathfrak{m}_3\oplus\mathfrak{m}_4\oplus\mathfrak{m}_5,\
\mathfrak{p}_1:=\mathfrak{m}_4,\
\mathfrak{n}_1:=\mathfrak{m}_1\oplus\mathfrak{m}_2\oplus\mathfrak{m}_3\oplus\mathfrak{m}_5.
\]
Then the decompositions
\[
\mathfrak{so}(10)=\mathfrak{k}_i\oplus\mathfrak{n}_i,\ \mathfrak{k}_i=\mathfrak{h}\oplus\mathfrak{p}_i,\ i=0,1,
\]
are reductive and $(\cdot,\cdot)$-orthogonal.\\

We now fix an explicit identification of $K_0/H$ with the sphere
\[
\mathbb{S}^9=\{\omega\in\Lambda^4W:||\omega||=1\},
\]
where the norm is taken with respect to the Hermitian inner product on $\Lambda^4W$ induced by $\langle\cdot,\cdot\rangle_W$.
Restricting the representation $\rho$ in \eqref{rho:model:E} to $K_0\subset \mathrm{Spin}(10)$, we obtain the induced unitary action of
$K_0$ on $\Lambda^4W$, and we define
\[
\psi_0:K_0/H\to \mathbb{S}^9\subseteq\Lambda^4W,\ \psi_0(gH):=\rho(g)\,u_1\wedge u_2\wedge u_3\wedge u_4\ (g\in K_0).
\]
This map is a well-defined diffeomorphism, and its differential at the identity class satisfies
\begin{align}\label{derivative:psi0:model:E}
    \begin{split}
        &(d\psi_0)_H\left(\sum_{k=1}^4X_{2k,2k-1}\right)=4\sqrt{-1}\,u_1\wedge u_2\wedge u_3\wedge u_4,\\[0.2em]
        &(d\psi_0)_H\left(X_{10,2k}+X_{9,2k-1}\right)=(-1)^{4-k}u_1\wedge\cdots\wedge \widehat{u_k}\wedge\cdots\wedge u_5,\\[0.2em]
        &(d\psi_0)_H\left(X_{10,2k-1}-X_{9,2k}\right)=\sqrt{-1}(-1)^{4-k}u_1\wedge\cdots\wedge \widehat{u_k}\wedge\cdots\wedge u_5,\ k=1,2,3,4.
    \end{split}
\end{align}
For $K_1/H\approx \left(\mathrm{Spin}(7)\cdot\mathrm{U}(1)\right)/(\mathrm{SU}(4)\cdot\mathrm{U}(1))\approx \mathrm{Spin}(7)/\mathrm{SU}(4)\approx\mathbb{S}^6$, let $\pi:\mathrm{Spin}(7)\to\mathrm{SO}(7)$ be the double covering and denote by $\{f_1,\dots,f_7\}$ the standard basis of $\mathbb{R}^7$. Then the map \[ \mathrm{Spin}(7)/\mathrm{SU}(4)\ni g\,\mathrm{SU}(4)\to \pi(g)f_7\in\mathbb{S}^6, \] 
induces a diffeomorphism 
\[
\psi_1:K_1/H\to\mathbb{S}^6
\]
such that \begin{equation}\label{derivative:psi1:model:E} (d\psi_1)_H(Y_j)=\pm f_j,\ j=1,\dots,6.
\end{equation}
\section{The Einstein Equation on Complex Projective Spaces}\label{Einstein:metrics:CP^n}
\subsection{Model A}
Let $\mathfrak{m}_1,\mathfrak{m}_2,\mathfrak{m}_3$ be as in \eqref{summands:model:A} and $(\cdot,\cdot)$ as in \eqref{inner:product:model:A}. Consider
\begin{align*}
    &\mathcal{B}_1:=\left\{\big(\sqrt{-1}E_{p+1,p+1},-\sqrt{-1}F_{q+1,q+1}\big)\right\},\\
    &\mathcal{B}_2:=\{(E_{p+1,j}-E_{j,p+1},0),\ (\sqrt{-1}(E_{p+1,j}+E_{j,p+1}),0):\ 1\leq j\leq p\},\\
    &\mathcal{B}_3:=\{(0,F_{q+1,j}-F_{j,q+1}),\ (0,\sqrt{-1}(F_{q+1,j}+F_{j,q+1})):\ 1\leq j\leq q\}.
\end{align*}
Then $\mathcal{B}:=\mathcal{B}_1\cup\mathcal{B}_2\cup\mathcal{B}_3$ is an $(\cdot,\cdot)$-orthonormal basis adapted to $\mathfrak{m}_1\oplus\mathfrak{m}_2\oplus\mathfrak{m}_3$. With respect to $\mathcal{B}$, the symbol $[ijk]$ is given by
\[
[ijk]=
\begin{cases}
2p, & \text{if } (i,j,k) \text{ is a permutation of } (1,2,2),\\[4pt]
2q, & \text{if } (i,j,k) \text{ is a permutation of } (1,3,3),\\[4pt]
0,  & \text{otherwise.}
\end{cases}
\]
We now compute the constants $b_i$ in \eqref{equation:b_i} for Model {\bf A}. Recall that the Killing form of $\mathfrak{u}(n)$ is
\[
B_{\mathfrak{u}(n)}(A,B)=2n\,\mathrm{Tr}(AB)-2\,\mathrm{Tr}(A)\mathrm{Tr}(B),\ A,B\in\mathfrak{u}(n),
\]
and hence, on $\mathfrak{g}_{p,q}=\mathfrak{u}(p+1)\oplus\mathfrak{u}(q+1)$,
\[
B=B_{\mathfrak{u}(p+1)}+B_{\mathfrak{u}(q+1)}.
\]
Using \eqref{inner:product:model:A}, we obtain
\[
-B\big{|}_{\mathfrak{m}_i\times\mathfrak{m}_i}=b_i(\cdot,\cdot)\big{|}_{\mathfrak{m}_i\times\mathfrak{m}_i},\ i=1,2,3,
\]
with
\[
b_1=2(p+q),\ b_2=4(p+1),\ b_3=4(q+1).
\]

For a $G_{p,q}$-invariant metric
\[
g=dt^2+f_1(t)^2\,(\cdot,\cdot)\big|_{\mathfrak{m}_1\times\mathfrak{m}_1}
+f_2(t)^2\,(\cdot,\cdot)\big|_{\mathfrak{m}_2\times\mathfrak{m}_2}
+f_3(t)^2\,(\cdot,\cdot)\big|_{\mathfrak{m}_3\times\mathfrak{m}_3},
\]
equations \eqref{Einstein2}–\eqref{Einstein3} take the form
\begin{align}
&\frac{f_1''}{f_1}
-\frac{p}{2}\,\frac{f_1^{2}}{f_2^{4}}
-\frac{q}{2}\,\frac{f_1^{2}}{f_3^{4}}
+2p\,\frac{f_1' f_2'}{f_1 f_2}
+2q\,\frac{f_1' f_3'}{f_1 f_3}
=-\lambda,\label{Einstein:f_1:model:A}\\[4pt]
&\frac{f_2''}{f_2}
-\frac{2(p+1)}{f_2^{2}}
+\frac{1}{2}\,\frac{f_1^{2}}{f_2^{4}}
+(2p-1)\frac{f_2'^2}{f_2^{2}}
+\frac{f_1' f_2'}{f_1 f_2}
+2q\,\frac{f_2' f_3'}{f_2 f_3}
=-\lambda,\label{Einstein:f_2:model:A}\\[4pt]
&\frac{f_3''}{f_3}
-\frac{2(q+1)}{f_3^{2}}
+\frac{1}{2}\,\frac{f_1^{2}}{f_3^{4}}
+(2q-1)\frac{f_3'^2}{f_3^{2}}
+\frac{f_1' f_3'}{f_1 f_3}
+2p\,\frac{f_2' f_3'}{f_2 f_3}
=-\lambda.\label{Einstein:f_3:model:A}
\end{align}
Moreover, by \eqref{differential:psi_0:model:A}, the inner product
\[
4(\cdot,\cdot)\big|_{\mathfrak{m}_1\times\mathfrak{m}_1}
+(\cdot,\cdot)\big|_{\mathfrak{m}_3\times\mathfrak{m}_3}
\]
coincides with the standard metric of curvature one on
$\mathfrak{p}_0=\mathfrak{m}_1\oplus\mathfrak{m}_3\cong T_H(K_0/H)\cong T_{\psi_0(H)}\mathbb{S}^{2q+1}$.
Therefore, by \eqref{general:initial:conditions}, the metric $g$ extends smoothly to $Q_0$ only if
\begin{align}\label{initial:conditions:model:A}
(f_1(0),f_2(0),f_3(0),f_1'(0),f_2'(0),f_3'(0))
=(0,\zeta_0,0,2,0,1),\ f_1''(0)=f_3''(0)=0,
\end{align}
where $\zeta_0>0$ is arbitrary.\\

Similarly, by \eqref{differential:psi_1:model:A}, the inner product
\[
4(\cdot,\cdot)\big|_{\mathfrak{m}_1\times\mathfrak{m}_1}
+(\cdot,\cdot)\big|_{\mathfrak{m}_2\times\mathfrak{m}_2}
\]
coincides with the standard metric of curvature one on
$\mathfrak{p}_1=\mathfrak{m}_1\oplus\mathfrak{m}_2\cong T_H(K_1/H)\cong T_{\psi_1(H)}\mathbb{S}^{2p+1}$.
Reparametrizing by $t\mapsto 1-t$ and applying again \eqref{general:initial:conditions}, we conclude that
$g$ extends smoothly to $Q_1$ only if
\begin{align}\label{final:conditions:model:A}
(f_1(1),f_2(1),f_3(1),f_1'(1),f_2'(1),f_3'(1))
=(0,0,\zeta_1,-2,-1,0),\ f_1''(1)=f_2''(1)=0,
\end{align}
where $\zeta_1>0$ is arbitrary.

\begin{proposition}\label{proposition:local:solutions:model:A} The following statements hold:
\begin{itemize}
    \item[$a)$] The system \eqref{Einstein:f_1:model:A}-\eqref{Einstein:f_3:model:A}, subject to the initial conditions \eqref{initial:conditions:model:A}, admits infinitely many solutions defined on intervals of the form $[0,\epsilon_0)$ for some $\epsilon_0>0.$
    \item[$b)$] The system \eqref{Einstein:f_1:model:A}-\eqref{Einstein:f_3:model:A}, subject to the final conditions \eqref{final:conditions:model:A}, admits infinitely many solutions defined on intervals of the form $(1-\epsilon_1,1]$ for some $\epsilon_1>0.$
    \item[$c)$] A necessary condition for the existence of a smooth, globally defined $G_{p,q}$-invariant Einstein metric on $\mathbb{C}P^{p+q+1}$ satisfying both \eqref{initial:conditions:model:A} and \eqref{final:conditions:model:A} is
    \begin{equation}\label{compatibility:condition:model:A}
    \frac{2(p+1)}{\zeta_0^2}-\frac{2(q+1)}{\zeta_0}f_2''(0)=\frac{2(q+1)}{\zeta_1^2}-\frac{2(p+1)}{\zeta_1}f_3''(1)=\lambda.
    \end{equation}
\end{itemize}
\end{proposition}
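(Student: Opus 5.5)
Parts $a)$ and $b)$ will follow from the singular initial value theory of Eschenburg and Wang~\cite{EW}; part $c)$ comes from evaluating the Einstein equations at the two singular orbits using the smoothness conditions \eqref{initial:conditions:model:A} and \eqref{final:conditions:model:A}.

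For $a)$, I would rewrite \eqref{Einstein:f_1:model:A}--\eqref{Einstein:f_3:model:A} as a first-order singular system in the variables $(P(t),L(t))$, as in \cite[Sections 3 and 5]{EW}. Here $J_+=\{1,3\}$ and $J_-=\{2\}$, with $\mathfrak{n}_0=\mathfrak{m}_2$ irreducible, so $P_0=\zeta_0^2\mathrm{Id}_{\mathfrak{m}_2}$, and $L_0=0$ because $Q_0$ is totally geodesic. The constants $c_1=2$ and $c_3=1$ come from the curvature-one normalization obtained from \eqref{differential:psi_0:model:A}. The local existence theorem of \cite{EW} then gives, for every choice of $\zeta_0>0$ and $\lambda\in\mathbb{R}$, a smooth solution on some $[0,\epsilon_0)$ satisfying \eqref{initial:conditions:model:A}. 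Since $\zeta_0$ ranges over $(0,\infty)$, this already yields infinitely many solutions. I would also note that $f_2''(0)$ is not fixed by the indicial analysis and may be used as an additional free parameter. The step I expect to need the most care is checking the hypotheses of \cite{EW}. The main points are that the linearization at $t=0$ has no resonant positive integer eigenvalues beyond those producing this free parameter, and that $f_1$, $f_3$ are odd and $f_2$ even so that the metric is smooth. For $b)$ I would apply the same argument after the reparametrization $t\mapsto 1-t$, which interchanges the roles of $\mathfrak{m}_2$ and $\mathfrak{m}_3$ and uses \eqref{differential:psi_1:model:A}.

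For $c)$, suppose a global smooth solution exists. Then \eqref{Einstein:f_2:model:A} holds on $(0,1)$, and I would pass to the limit $t\to0^+$ term by term, using $f_1(t)=2t+O(t^3)$, $f_3(t)=t+O(t^3)$, $f_2(t)=\zeta_0+\tfrac12 f_2''(0)t^2+O(t^3)$. The limits are as follows:
\begin{itemize}
\item $f_2''/f_2\to f_2''(0)/\zeta_0$;
\item $-2(p+1)/f_2^2\to-2(p+1)/\zeta_0^2$;
\item $f_1^2/f_2^4\to0$ and $f_2'^2/f_2^2\to0$;
\item since $f_1'/f_1\sim 1/t$ and $f_2'\sim f_2''(0)t$, we get $f_1'f_2'/(f_1f_2)\to f_2''(0)/\zeta_0$;
\item in the same way $2q\,f_2'f_3'/(f_2f_3)\to 2q f_2''(0)/\zeta_0$.
\end{itemize}
Summing these gives
\[
(2q+2)\frac{f_2''(0)}{\zeta_0}-\frac{2(p+1)}{\zeta_0^2}=-\lambda .
\]
Running the identical computation on \eqref{Einstein:f_3:model:A} as $t\to1^-$, with the roles of $(f_2,p)$ and $(f_3,q)$ swapped, gives
\[
(2p+2)\frac{f_3''(1)}{\zeta_1}-\frac{2(q+1)}{\zeta_1^2}=-\lambda .
\]
Equating both expressions with $\lambda$ yields \eqref{compatibility:condition:model:A}.
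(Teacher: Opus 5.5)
Your part $c)$ is correct, but parts $a)$ and $b)$ have a genuine gap.

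\textbf{The gap in $a)$ and $b)$.} The content of $a)$ is non-uniqueness for \emph{fixed} $\zeta_0$ and $\lambda$, and that is what the paper proves. Letting $\zeta_0$ range over $(0,\infty)$ does not give this. The same reasoning would produce ``infinitely many'' solutions in Model~\textbf{C}, where Proposition~\ref{proposition:local:solutions:model:C} proves uniqueness for each fixed $\zeta_0$.

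Your proposed extra parameter $f_2''(0)$ is not free. Your own limit computation in $c)$ applies to every local solution that is $C^2$ up to $t=0$ and satisfies \eqref{initial:conditions:model:A}. It forces $(2q+2)f_2''(0)/\zeta_0-2(p+1)/\zeta_0^2=-\lambda$. So, as written, your treatment of $a)$ contradicts your own part $c)$.

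\textbf{Where the freedom actually is.} The free parameter sits at third order in the collapsing directions. Rescale $f_1\mapsto f_1/2$ and set $x_1=f_1^2/t^2$, $x_2=f_2^2$, $x_3=f_3^2/t^2$, $y=\tfrac12 x'$. The system becomes $x'=2y$, $y'=t^{-2}A(x)+t^{-1}B(x,y)+C(t,x,y)$, and the Taylor coefficients satisfy $\mathcal{L}_m(x^{m+2})=D_m$. You then need three computations.

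First, $A$ and $B$ vanish at $(x^0,0)$, where $x^0=(1,\zeta_0^2,1)$.

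Second, $\det\mathcal{L}_m=m(m+3)(m+2q+2)(m+2q+4)(m+4q+2)/(m+2)^2\neq 0$ for $m\geq 1$. So no further freedom or obstruction appears beyond $m=0$.

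Third, at $m=0$, rows $1$ and $3$ of $\mathcal{L}_0$ both equal $(3,2p/\zeta_0^2,6q)$, and row $2$ is $(0,2q+2,0)$. Hence $\ker\mathcal{L}_0=\mathrm{span}\{(-2q,0,1)\}$. Also $D_0=(-2\lambda,4(p+1)-2\lambda\zeta_0^2,-2\lambda)$ lies in the image, because $(D_0)_1=(D_0)_3$.

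Consequently the free parameter moves only $x_1''(0)$ and $x_3''(0)$, that is, the pair $(f_1'''(0),f_3'''(0))$ along a line. Geometrically this is a Berger-type squashing of the collapsing $\mathbb{S}^{2q+1}$. The second component, equivalently $f_2''(0)$, is fixed. Each choice of the parameter gives a convergent series by \cite[Section~5]{EW}. Your remark about having no resonances ``beyond those producing this free parameter'' must be replaced by exactly this computation. For $b)$ the picture is mirrored at $t=1$: the freedom lies in the third-order coefficients of $f_1$ and $f_2$, and $f_3''(1)$ is determined.

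\textbf{Part $c)$.} Your argument here takes a different route from the paper's and is valid. The paper invokes real-analyticity \cite[Theorem~5.2]{DK} so that the power-series recursion applies to a global metric, and then reads off the second component of $x^2$. You instead take limits directly in \eqref{Einstein:f_2:model:A} as $t\to0^+$ and in \eqref{Einstein:f_3:model:A} as $t\to1^-$. This uses only the low-order Taylor expansions supplied by the smoothness conditions, with no analyticity and no indicial recursion, so it is shorter and more elementary. The paper's version has the advantage of placing the relation inside the indicial analysis. That makes it visible that $f_2''(0)$ and $f_3''(1)$ are forced rather than free, which is precisely the point your part $a)$ got wrong.
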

\begin{proof} Observe that, by rescaling $f_1$ to $\frac{f_1}{2},$ we can rewrite \eqref{Einstein:f_1:model:A}-\eqref{final:conditions:model:A} in the form
\begin{align}
&\frac{f_1''}{f_1}
-2p\,\frac{f_1^{2}}{f_2^{4}}
-2q\,\frac{f_1^{2}}{f_3^{4}}
+2p\,\frac{f_1' f_2'}{f_1 f_2}
+2q\,\frac{f_1' f_3'}{f_1 f_3}
=-\lambda,\label{scaled:Einstein:f_1:model:A}\\[4pt]
&\frac{f_2''}{f_2}
-\frac{2(p+1)}{f_2^{2}}
+2\,\frac{f_1^{2}}{f_2^{4}}
+(2p-1)\frac{f_2'^2}{f_2^{2}}
+\frac{f_1' f_2'}{f_1 f_2}
+2q\,\frac{f_2' f_3'}{f_2 f_3}
=-\lambda,\label{scaled:Einstein:f_2:model:A}\\[4pt]
&\frac{f_3''}{f_3}
-\frac{2(q+1)}{f_3^{2}}
+2\,\frac{f_1^{2}}{f_3^{4}}
+(2q-1)\frac{f_3'^2}{f_3^{2}}
+\frac{f_1' f_3'}{f_1 f_3}
+2p\,\frac{f_2' f_3'}{f_2 f_3}
=-\lambda,\label{scaled:Einstein:f_3:model:A}\\
&(f_1(0),f_2(0),f_3(0),f_1'(0),f_2'(0),f_3'(0))=(0,\zeta_0,0,1,0,1),\ f_1''(0)=f_3''(0)=0,\label{scaled:initial:conditions:model:A}\\
&(f_1(1),f_2(1),f_3(1),f_1'(1),f_2'(1),f_3'(1))=(0,0,\zeta_1,-1,-1,0),\ f_1''(1)=f_2''(1)=0.\label{scaled:final:conditions:model:A}
\end{align}

Following \cite[Section 3]{EW}, consider the change of variables
\begin{equation}\label{change:of:variables:0:model:A}
x_i=\frac{f_i^2}{t^2},\ i=1,3,\  x_2=f_2^2,\ y_i=\frac{f_if_i'}{t^2}-\frac{f_i^2}{t^3},\ i=1,3,\ y_2=f_2f_2'.
\end{equation}
Set $x=(x_1,x_2,x_3)$ and $y=(y_1,y_2,y_3).$ Then, equations \eqref{scaled:Einstein:f_1:model:A}-\eqref{scaled:Einstein:f_3:model:A} are equivalent to 
\begin{align}\label{EW:equation}
    \begin{split}
    &x'=2y,\\
    &y'=\frac{1}{t^2}A(x)+\frac{1}{t}B(x,y)+C(t,x,y),
    \end{split}
\end{align}
where
\begin{align*}
    &A(x)=(A_1(x),A_2(x),A_3(x)),\\
    &B(x,y)=(B_1(x,y),B_2(x,y),B_3(x,y)),\\
    &C(t,x,y)=(C_1(t,x,y),C_2(t,x,y),C_3(t,x,y)),
\end{align*}
and
\begin{align*}
    &A_1(x)=2q\frac{x_1^2}{x_3^2}-2qx_1,\\
    &A_2(x)=0,\\
    &A_3(x)=2(q+1)\;-\;2\,\frac{x_1}{x_3}-2q\,x_3,\\
    &B_1(x,y)=-2p\,\frac{x_1y_2}{x_2}-2q\,\frac{x_1y_3}{x_3}-2(q+1)\,y_1,\\
    &B_2(x,y)=-(2q+1)\,y_2,\\
    &B_3(x,y)=-(4q+1)\,y_3-\frac{x_3y_1}{x_1}-2p\frac{x_3y_2}{x_2},\\
    &C_1(t,x,y)=\frac{y_1^{2}}{x_1}-2p\,\frac{y_1y_2}{x_2}-2q\,\frac{y_1y_3}{x_3}-\lambda x_1+2p\frac{x_1^{2}}{x_2^{2}}\,t^{2},\\
    &C_2(t,x,y)=-2(p-1)\,\frac{y_2^{2}}{x_2}-\frac{y_1y_2}{x_1}-2q\,\frac{y_2y_3}{x_3}-\lambda x_2-2\frac{x_1}{x_2}\,t^{2}+2(p+1),\\
    &C_3(t,x,y)=-2(q-1)\,\frac{y_3^{2}}{x_3}
    -\frac{y_1y_3}{x_1}-2p\,\frac{y_2y_3}{x_2}-\lambda x_3.
\end{align*}
The initial conditions \eqref{scaled:initial:conditions:model:A} become
\begin{equation}\label{new:initial:model:A}
    \bigl(x(0),y(0)\bigr)=\bigl((1,\zeta_0^{2},1),(0,0,0)\bigr).
\end{equation}
Observe that the maps $A,B$ satisfy
\begin{align*}
    &A(1,\zeta_0^{2},1)=(0,0,0),\\
    &2\,(dA)_{(1,\zeta_0^{2},1)}(0,0,0)+B((1,\zeta_0^{2},1),(0,0,0))=(0,0,0),
\end{align*}
where $dA$ denotes the Jacobian of $A$. Since $A$, $B$, and $C$ are analytic in a neighborhood of \eqref{new:initial:model:A}, \cite[Section~5]{EW} yields a power-series solution to \eqref{EW:equation} with initial data \eqref{new:initial:model:A}, namely,
\begin{equation}\label{power:series:solution}
    x(t)=\sum_{m=0}^{\infty}\frac{x^{m}}{m!}\,t^{m},\
    y(t)=\sum_{m=0}^{\infty}\frac{y^{m}}{m!}\,t^{m},
\end{equation}
defined on $[0,\epsilon_0)$ for some $\epsilon_0>0.$ Moreover, $x'=2y$ implies $y^m=\frac{1}{2}x^{m+1}$ for all $m\geq0.$ Write 
\begin{align}\label{power:series:A:B:C}
\begin{split}
&A(x(t))=\sum\limits_{m=0}^{\infty}\frac{A^m}{m!}t^m,\\
&B(x(t),y(t))=\sum\limits_{m=0}^{\infty}\frac{B^m}{m!}t^m,\\ &C(t,x(t),y(t))=\sum\limits_{m=0}^{\infty}\frac{C^m}{m!}t^m.
\end{split}
\end{align}
If
\begin{equation}\label{Lm:definition}
\mathcal{L}_m:=(m+1)\mathrm{Id}-\frac{2}{m+2}\,(dA)_{x(0)}-(\partial_yB)_{(x(0),y(0))},
\end{equation}
where $\partial_yB$ denotes the Jacobian of $B$ with respect to $y$, then for $m\geq 0$ the coefficients  $x^m$ can be obtained by solving the linear equation 
\begin{align*}
\mathcal{L}_m(x^{m+2})=D_m,
\end{align*}
where
\begin{align}\label{Dm:definition}
\begin{split}
D_m:=&\,2(m+1)C^m+\frac{2}{m+2}\left(A^{m+2}-(dA)_{x(0)}(x^{m+2})\right)\\[1em]
&+2\left(B^{m+1}-\frac{1}{2}\left(\partial_y B\right)_{(x(0),y(0))}(x^{m+2})\right),
\end{split}\end{align}
depends only on $x^1,...,x^{m+1}.$ Since 
\[
\det(\mathcal{L}_m)=\frac{m(m+3)(m+2q+2)(m+2q+4)(m+4q+2)}{(m+2)^2}>0,\ \forall\, m\ge 1,
\]
the operator $\mathcal{L}_m$ is invertible for all $m\ge 1.$ Therefore, once $x^{0},x^{1},x^{2}$ are prescribed, the coefficients $x^{m},y^{m}$ in \eqref{power:series:solution} are uniquely determined for $m\ge 3$. Note that $x^{0}=x(0)=(1,\zeta_0^{2},1),$ $x^{1}=x'(0)=2y(0)=(0,0,0),$ whereas $x^2$ may be any vector satisfying
\begin{equation*}
\mathcal{L}_0(x^2)=D_0=(-2\lambda,4(p+1)-2\lambda\zeta_0^2,-2\lambda).
\end{equation*}
Solving this equation yields 
\[x^2=\left(-\frac{2\lambda}{3},\frac{2(p+1)-\lambda\zeta_0^2}{q+1},-\frac{p(2(p+1)-\lambda\zeta_0^2)}{3q\zeta_0^2(q+1)}\right)+s\left(-2q,0,1\right),\ s\in\mathbb{R}.\]
Hence, each $s\in\mathbb{R}$ determines a distinct power-series solution \eqref{power:series:solution}. This proves $a).$\\

For $b),$ note that \eqref{scaled:Einstein:f_1:model:A}–\eqref{scaled:Einstein:f_3:model:A} are invariant under the reparametrization $g_i(t)=f_i(1-t)$, $i=1,2,3$, and that the final conditions \eqref{scaled:final:conditions:model:A} are equivalent to
\[
(g_1(0),g_2(0),g_3(0),g_1'(0),g_2'(0),g_3'(0))=(0,0,\zeta_1,1,1,0),\
g_1''(0)=g_2''(0)=0.
\]
With the change of variables
\begin{equation*}
    x_i=\frac{g_i^{2}}{t^{2}},\ i=1,2;\ x_3=g_3^{2};\
    y_i=\frac{g_i g_i'}{t^{2}}-\frac{g_i^{2}}{t^{3}},\ i=1,2;\
    y_3=g_3 g_3',
\end{equation*}
the same argument as in $a)$ applies and yields infinitely many solutions defined on intervals of the form $(1-\epsilon_1,1]$ for some $\epsilon_1>0$. This proves $b).$\\

To prove $c)$, assume that there exists a smooth, globally defined
$G_{p,q}$-invariant Einstein metric on $\mathbb{C}P^{p+q+1}$ satisfying
\eqref{initial:conditions:model:A} and \eqref{final:conditions:model:A}.
By \cite[Theorem 5.2]{DK}, such a metric must be real-analytic. Therefore, near $t=0$ the functions $f_1,f_2,f_3$ admit power-series expansions, so the coefficient computation carried out in part $a)$ applies to any such metric. Observe that in the proof of $a)$ we found that the second coordinate of $x^2$ must satisfy
\[
(x^2)_{2}=\frac{2(p+1)-\lambda\zeta_0^2}{q+1},
\]
which provides
\[
\frac{2(p+1)}{\zeta_0^2}-\frac{2(q+1)}{\zeta_0}f_2''(0)=\lambda.
\]
Applying an analogous argument at $t=1,$ we also obtain
\[
\frac{2(q+1)}{\zeta_1^2}-\frac{2(p+1)}{\zeta_1}f_3''(1)=\lambda.
\]
Therefore, a necessary condition on the boundary values $f_2''(0)$ and $f_3''(1)$ for the existence of a smooth, globally defined $G_{p,q}$-invariant Einstein metric is 
\[
\frac{2(p+1)}{\zeta_0^2}-\frac{2(q+1)}{\zeta_0}f_2''(0)=\frac{2(q+1)}{\zeta_1^2}-\frac{2(p+1)}{\zeta_1}f_3''(1)=\lambda.
\]
This proves $c).$
\end{proof}
\begin{remark}\label{remark:L0:automatic:model:A}
Recall from \eqref{extension:conditions} that, in the smooth extension problem at $Q_0$, the endomorphism
$L_0$ is a prescribed $H$-invariant traceless symmetric operator on $\mathfrak{n}_0.$ In this paper we restrict to the case $L_0=0$. In Model {\bf A}, this restriction is in fact automatic. Indeed, $\mathfrak{n}_0=\mathfrak{m}_2$ is $H$-irreducible, so any
$H$-invariant symmetric endomorphism of $\mathfrak{n}_0$ must be a multiple of the identity. Since $L_0$ is traceless,
it follows that $L_0=0$. An analogous argument applies at the other singular orbit $Q_1,$ because $\mathfrak{n}_1=\mathfrak{m}_3$ is also $H$-irreducible. Consequently, any smooth, globally defined $G_{p,q}$-invariant Einstein metric on $\mathbb{C}P^{p+q+1}$ necessarily satisfies
the boundary conditions \eqref{initial:conditions:model:A} and \eqref{final:conditions:model:A} for some $\zeta_0,\zeta_1>0$.
In particular, the condition \eqref{compatibility:condition:model:A} is necessary
for the existence of such a metric, without imposing \eqref{initial:conditions:model:A}-\eqref{final:conditions:model:A} a priori.
\end{remark}

\subsection{Model B}
Let $\mathfrak{m}_1,\ldots,\mathfrak{m}_5$ be as in \eqref{reductive:decomposition:explicit:model:B} and $(\cdot,\cdot)$ as in \eqref{inner:product:model:B}. Set
\begin{align*}
    &\mathcal{B}_1:=\{X_{11}\},\
    \mathcal{B}_2:=\{X_{21},X_{22}\},\ \mathcal{B}_3:=\{X_{31},X_{32}\},\\
    &\mathcal{B}_4:=\{X_{4j},Y_{4j}:1\le j\le n-1\},\ \mathcal{B}_5:=\{X_{5j},Y_{5j}:1\le j\le n-1\}.
\end{align*}
Then $\mathcal{B}:=\mathcal{B}_1\cup\cdots\cup\mathcal{B}_5$ is an $(\cdot,\cdot)$-orthonormal basis adapted to
\(\mathfrak{m}_1\oplus\cdots\oplus\mathfrak{m}_5.\) By \eqref{inner:product:model:B}, the constants $b_i$ in \eqref{equation:b_i} are
\[
b_i=1,\ i=1,\ldots,5.
\]

The only nonzero values of $[ijk]$ (up to permutations of $(i,j,k)$) are
\[
[123]=\frac{2}{n+3},\
[133]=\frac{(n-1)^2}{(n+1)(n+3)},\
[144]=[155]=\frac{n-1}{(n+1)(n+3)},
\]
\[
[245]=\frac{n-1}{n+3},\ [345]=\frac{2(n-1)}{(n+1)(n+3)}.
\]

Let $g$ be a $\mathrm{SU}(2)\times\mathrm{SU}(n+1)$-invariant metric of the form
\begin{equation}\label{metric:g:model:B}
    g=dt^2+\sum_{i=1}^{5} f_i(t)^2\,(\cdot,\cdot)\big|_{\mathfrak{m}_i\times\mathfrak{m}_i}.
\end{equation}
Then the equations \eqref{Einstein2}-\eqref{Einstein3} are given by
\begin{align}
&\begin{aligned}\label{Einstein:f1:model:B}
&\frac{f_1''}{f_1}-\frac{2}{n+3}\frac{1}{f_1^2}-\frac{1}{n+3}\left(\frac{f_1^2}{f_2^2f_3^2}-\frac{f_2^2}{f_1^2f_3^2}-\frac{f_3^2}{f_1^2f_2^2}\right)\\[0.5em]
&-\frac{(n-1)}{4(n+1)(n+3)}\left((n-1)\frac{f_1^2}{f_3^4}+\frac{f_1^2}{f_4^4}+\frac{f_1^2}{f_5^4}\right)\\[0.5em]
&+2\frac{f_1'f_2'}{f_1f_2}+2\frac{f_1'f_3'}{f_1f_3}+2(n-1)\frac{f_1'f_4'}{f_1f_4}+2(n-1)\frac{f_1'f_5'}{f_1f_5}=-\lambda,
\end{aligned}\\[1em]
&\begin{aligned}\label{Einstein:f2:model:B}
&\frac{f_2''}{f_2}-\frac{1}{2f_2^2}-\frac{1}{2(n+3)}\left(\frac{f_2^2}{f_1^2f_3^2}-\frac{f_1^2}{f_2^2f_3^2}-\frac{f_3^2}{f_1^2f_2^2}\right)\\[0.5em]
&-\frac{n-1}{4(n+3)}\left(\frac{f_2^2}{f_4^2f_5^2}-\frac{f_4^2}{f_2^2f_5^2}-\frac{f_5^2}{f_2^2f_4^2}\right)\\[0.5em]
&+\frac{f_2'^2}{f_2^2}+\frac{f_1'f_2'}{f_1f_2}+2\frac{f_2'f_3'}{f_2f_3}+2(n-1)\frac{f_2'f_4'}{f_2f_4}+2(n-1)\frac{f_2'f_5'}{f_2f_5}=-\lambda,
\end{aligned}\\[1em]
&\begin{aligned}\label{Einstein:f3:model:B}
    &\frac{f_3''}{f_3}-\frac{1}{2f_3^2}-\frac{1}{2(n+3)}\left(\frac{f_3^2}{f_1^2f_2^2}-\frac{f_1^2}{f_2^2f_3^2}-\frac{f_2^2}{f_1^2f_3^2}\right)\\[0.5em]
    &+\frac{n-1}{2(n+1)(n+3)}\left(\frac{n-1}{2}\frac{f_1^2}{f_3^4}-\frac{f_3^2}{f_4^2f_5^2}+\frac{f_4^2}{f_3^2f_5^2}+\frac{f_5^2}{f_3^2f_4^2}\right)\\[0.5em]
    &+\frac{f_3'^2}{f_3^2}+\frac{f_1'f_3'}{f_1f_3}+2\frac{f_2'f_3'}{f_2f_3}+2(n-1)\frac{f_3'f_4'}{f_3f_4}+2(n-1)\frac{f_3'f_5'}{f_3f_5}=-\lambda,
\end{aligned}\\[1em]
&\begin{aligned}\label{Einstein:f4:model:B}
    &\frac{f_4''}{f_4}-\frac{1}{2f_4^2}-\frac{1}{4(n+3)}\left(\frac{f_4^2}{f_2^2f_5^2}-\frac{f_2^2}{f_4^2f_5^2}-\frac{f_5^2}{f_2^2f_4^2}\right)\\[0.5em]
    &+\frac{1}{2(n+1)(n+3)}\left(\frac{1}{2}\frac{f_1^2}{f_4^4}-\frac{f_4^2}{f_3^2f_5^2}+\frac{f_3^2}{f_4^2f_5^2}+\frac{f_5^2}{f_3^2f_4^2}\right)\\[0.5em]
    &+(2n-3)\frac{f_4'^2}{f_4^2}+\frac{f_1'f_4'}{f_1f_4}+2\frac{f_2'f_4'}{f_2f_4}+2\frac{f_3'f_4'}{f_3f_4}+2(n-1)\frac{f_4'f_5'}{f_4f_5}=-\lambda,
\end{aligned}\\[1em]
&\begin{aligned}\label{Einstein:f5:model:B}
    &\frac{f_5''}{f_5}-\frac{1}{2f_5^2}-\frac{1}{4(n+3)}\left(\frac{f_5^2}{f_2^2f_4^2}-\frac{f_2^2}{f_4^2f_5^2}-\frac{f_4^2}{f_2^2f_5^2}\right)\\[0.5em]
    &+\frac{1}{2(n+1)(n+3)}\left(\frac{1}{2}\frac{f_1^2}{f_5^4}-\frac{f_5^2}{f_3^2f_4^2}+\frac{f_3^2}{f_4^2f_5^2}+\frac{f_4^2}{f_3^2f_5^2}\right)\\[0.5em]
    &+(2n-3)\frac{f_5'^2}{f_5^2}+\frac{f_1'f_5'}{f_1f_5}+2\frac{f_2'f_5'}{f_2f_5}+2\frac{f_3'f_5'}{f_3f_5}+2(n-1)\frac{f_4'f_5'}{f_4f_5}=-\lambda.
\end{aligned}
\end{align}
\begin{align}\label{Einstein:nondiagonal:model:B}
-\frac{f_1^2}{f_3^2}
-\frac{f_2^2f_3^2}{f_4^2f_5^2}
+\frac{f_4^2}{f_5^2}+\frac{f_5^2}{f_4^2}
+\frac{f_2^2}{f_1^2}-\frac{f_3^2}{f_1^2}=0.
\end{align}
To determine the boundary conditions, recall that $T_{H}(K_0/H)$ is identified with
$\mathfrak{p}_0=\mathfrak{m}_1\oplus\mathfrak{m}_4$.
By \eqref{derivatives:psi0:model:B}, the inner product
\[
\displaystyle\frac{n+3}{2(n+1)}\,(\cdot,\cdot)\big|_{\mathfrak{m}_1\times\mathfrak{m}_1}
+\frac{1}{4(n+1)}\,(\cdot,\cdot)\big|_{\mathfrak{m}_4\times\mathfrak{m}_4},
\]
coincides with the standard metric of curvature one on $\mathfrak{p}_0\cong T_{\psi_0(H)}\mathbb{S}^{2n-1}$.
Therefore, by \eqref{general:initial:conditions}, the metric \eqref{metric:g:model:B}
extends smoothly to $t=0$ provided that
\begin{align}\label{initial:conditions:model:B}
\begin{split}&(f_1(0),f_2(0),f_3(0),f_4(0),f_5(0))=(0,\zeta_0,\zeta_0,0,\zeta_0),\ \zeta_0>0,\\[0.2em]
&(f_1'(0),f_2'(0),f_3'(0),f_4'(0),f_5'(0))
=\left(\sqrt{\frac{n+3}{2(n+1)}},\,0,\,0,\,\frac{1}{2\sqrt{n+1}},\,0\right),\\[0.2em]
&f_1''(0)=f_4''(0)=0.
\end{split}
\end{align}

We argue similarly for the other singular orbit. Recall that $T_{H}(K_1/H)$ is identified with
$\mathfrak{p}_1=\mathfrak{m}_2$.
By \eqref{derivatives:psi1:model:B}, the inner product
\[
\frac{1}{n+3}\,(\cdot,\cdot)\big|_{\mathfrak{m}_2\times\mathfrak{m}_2},
\]
coincides with the standard metric of curvature one on $\mathfrak{p}_1\cong T_{\psi_1(H)}\mathbb{S}^{2}$.
Reparametrizing by $t\mapsto 1-t$ and applying again \eqref{general:initial:conditions}, we obtain that
$g$ extends smoothly to $t=1$ provided that
\begin{align}\label{final:conditions:model:B}
\begin{split}&(f_1(1),f_2(1),f_3(1),f_4(1),f_5(1))
=\left(\zeta_1,0,\zeta_1,\xi_1,\xi_1\right),\ \zeta_1>0,\ \xi_1>0,\\[0.2em]
&(f_1'(1),f_2'(1),f_3'(1),f_4'(1),f_5'(1))
=\left(0,\,-\frac{1}{\sqrt{n+3}},\,0,\,0,\,0\right),\ f_2''(1)=0,
\end{split}
\end{align}
where we used that, in this case, the $K_1$-module $\mathfrak{n}_1$ splits into the two inequivalent
irreducible submodules $\mathfrak{m}_1\oplus\mathfrak{m}_3$ and $\mathfrak{m}_4\oplus\mathfrak{m}_5$,
so that the metric operator on $\mathfrak{n}_1$ is determined by two independent parameters $\zeta_1$ and $\xi_1$.
\begin{proposition}\label{local:solutions:model:B}
    The following statements hold:
    \begin{itemize}
        \item[$a)$] The system \eqref{Einstein:f1:model:B}-\eqref{Einstein:f5:model:B}, subject to the initial conditions \eqref{initial:conditions:model:B}, admits infinitely many solutions defined on intervals of the form $[0,\epsilon_0)$ for some $\epsilon_0>0.$
        \item[$b)$] The system \eqref{Einstein:f1:model:B}-\eqref{Einstein:f5:model:B}, subject to the final conditions \eqref{final:conditions:model:B}, admits infinitely many solutions defined on intervals of the form $(1-\epsilon_1,1]$ for some $\epsilon_1>0.$ 
    \end{itemize}
\end{proposition}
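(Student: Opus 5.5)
Following the approach of Proposition \ref{proposition:local:solutions:model:A}, we reduce each part to the singular initial value problem of \cite[Sections 3 and 5]{EW}. For $a)$ we first rescale $f_1$ and $f_4$ by the constants $\sqrt{2(n+1)/(n+3)}$ and $2\sqrt{n+1}$, so that the slopes in \eqref{initial:conditions:model:B} become $1$. We then introduce the variables
\[
x_i=\frac{f_i^2}{t^2},\ i=1,4,\quad x_i=f_i^2,\ i=2,3,5,\quad y_i=\frac{f_if_i'}{t^2}-\frac{f_i^2}{t^3},\ i=1,4,\quad y_i=f_if_i',\ i=2,3,5.
\]
With these variables, \eqref{Einstein:f1:model:B}--\eqref{Einstein:f5:model:B} become a system of the form \eqref{EW:equation}, namely $x'=2y$ and $y'=t^{-2}A(x)+t^{-1}B(x,y)+C(t,x,y)$, with $A,B,C$ rational and analytic near the initial point $x(0)=(1,\zeta_0^2,\zeta_0^2,1,\zeta_0^2)$, $y(0)=0$. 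This step is bookkeeping. One multiplies each equation by $f_i^2$ (or $f_i^2/t^2$) and collects the terms singular in $t$. These come from $f_1''/f_1$, $f_4''/f_4$, the cross terms $f_1'f_k'/(f_1f_k)$ and $f_4'f_k'/(f_4f_k)$, and the bracket terms containing $f_1^{-2}$ or $f_4^{-2}$. The terms with $f_2^2-f_3^2$ over $f_1^2$ are $O(1)$ only if $f_2-f_3=O(t^2)$, and this is guaranteed by $f_2(0)=f_3(0)=\zeta_0$ and $f_2'(0)=f_3'(0)=0$.

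Next we check the two compatibility conditions of \cite[Section 5]{EW}: $A(x(0))=0$ and $2(dA)_{x(0)}(y(0))+B(x(0),y(0))=0$. The second holds trivially because $y(0)=0$ and $B$ is linear in $y$. The first says that the rescaled curvature-one sphere metric on $\mathfrak{p}_0$ is consistent with the bracket structure, and that the $O(t^{-2})$ terms coming from $[123]$ and $[245]$ cancel when $f_2=f_3=f_5=\zeta_0$ at $t=0$. We expect this cancellation to hold precisely because the constants $c_i$ were read off from $d\psi_0$, and we verify it directly.

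\cite[Section 5]{EW} then gives formal power series solutions whose coefficients satisfy the recursion $\mathcal{L}_m(x^{m+2})=D_m$, with $\mathcal{L}_m$ and $D_m$ as in \eqref{Lm:definition}--\eqref{Dm:definition}, and these series converge on some $[0,\epsilon_0)$. The main computation is the $5\times5$ determinant of $\mathcal{L}_m$, which is a rational function of $m$ and $n$. We will show that it is nonzero for every $m\ge1$, for instance by exhibiting it as a product of linear factors in $m$ with positive coefficients, as in Model \textbf{A}. We will also show that $\mathcal{L}_0$ is singular, with a kernel of dimension at least one and $D_0$ in its range. Then $x^2=x^2_{\mathrm{part}}+s\,v$ with $v\in\ker\mathcal{L}_0$ and $s\in\mathbb{R}$ free, and distinct values of $s$ give distinct solutions. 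This is the step we expect to be the main obstacle: computing $\ker\mathcal{L}_0$, checking that $D_0\in\operatorname{im}\mathcal{L}_0$, and excluding integer resonances $m\ge1$. Following Model \textbf{A}, we expect the kernel to come from the $\mathfrak{m}_1$--$\mathfrak{m}_4$ block, corresponding to the freedom in the second-order deformation of the sphere directions.

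For $b)$ we reparametrize by $g_i(t)=f_i(1-t)$, which leaves \eqref{Einstein:f1:model:B}--\eqref{Einstein:f5:model:B} invariant. After rescaling $g_2$ by $\sqrt{n+3}$, we use the analogous variables: $x_2=g_2^2/t^2$ is the only singular coordinate, and $x_i=g_i^2$ for $i\neq2$, with initial point $(\zeta_1^2,1,\zeta_1^2,\xi_1^2,\xi_1^2)$. Again $A(x(0))=0$; here the relevant cancellations use $g_1=g_3$ and $g_4=g_5$ at $t=1$, which kill the $g_2^{-2}$ terms coming from $[123]$ and $[245]$. The same recursion applies. We compute $\det\mathcal{L}_m\ne0$ for $m\ge1$ and find a nontrivial kernel of $\mathcal{L}_0$ compatible with $D_0$, which yields a one-parameter (at least) family of solutions on $(1-\epsilon_1,1]$. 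We do not impose the non-diagonal equation \eqref{Einstein:nondiagonal:model:B} in either part, consistent with the statement.
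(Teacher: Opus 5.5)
Your proposal is correct and is essentially the paper's proof: the Eschenburg--Wang variables, the check $A(x(0))=0=2(dA)_{x(0)}(y(0))+B(x(0),y(0))$, invertibility of $\mathcal{L}_m$ for $m\ge1$, and a one-dimensional $\ker\mathcal{L}_0$ (with $D_0$ in the range) giving the free parameter $s$ in $x^2$. The paper writes this out only for $b)$, without rescaling $g_2$; it finds $\det\mathcal{L}_m=\frac{m(m+1)(m+3)^3(m+4)^2(m+5)}{(m+2)^3}$ and $\ker\mathcal{L}_0=\mathrm{span}\{(-2,0,1,0,0)\}$, i.e.\ in the $\mathfrak{m}_1$--$\mathfrak{m}_3$ block, and omits $a)$ as analogous.

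For $a)$, your expectation that the kernel lies in the $\mathfrak{m}_1$--$\mathfrak{m}_4$ block is right, but by my own computation from the paper's equations (the paper does not do this case) $\det\mathcal{L}_m$ does not factor cleanly as in Model \textbf{A}. On the $(\mathfrak{m}_2,\mathfrak{m}_3,\mathfrak{m}_5)$ block one has $\mathcal{L}_m=(m+2n)\mathrm{Id}-\frac{2}{m+2}M$, with $M$ the corresponding block of $(dA)_{x(0)}$, and $M$ appears to have irrational eigenvalues already for $n=2$. So prove nonvanishing for $m\ge1$ by showing these eigenvalues lie in $[0,2n)$ rather than by factoring.
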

\begin{proof} We begin with part $b).$ Observe that equations \eqref{Einstein:f1:model:B}-\eqref{Einstein:f5:model:B} are invariant under the reparametrization $g_i(t)=f_i(1-t),\ i=1,2,3,4,5,$ and that the final conditions \eqref{final:conditions:model:B} become 
\begin{align}\label{reparametrized:final:conditions:model:B}
\begin{split}
&(g_1(0),g_2(0),g_3(0),g_4(0),g_5(0))=(\zeta_1,0,\zeta_1,\xi_1,\xi_1),\\
&(g_1'(0),g_2'(0),g_3'(0),g_4'(0),g_5'(0))=\Bigl(0,\frac{1}{\sqrt{n+3}},0,0,0\Bigr),\ g_2''(0)=0.
\end{split}
\end{align}
As in the proof of Proposition \ref{proposition:local:solutions:model:A}, define the new variables 
\begin{equation*}
    x_2=\frac{g_2^2}{t^2},\ y_2=\frac{g_2g_2'}{t^2}-\frac{g_2^2}{t^3},\ x_i=g_i^2,\ y_i=g_ig_i',\ i=1,3,4,5. 
\end{equation*}
Then the system \eqref{Einstein:f1:model:B}-\eqref{Einstein:f5:model:B} can be rewritten as 
\begin{align*}
    &x'=2y\\
    &y'=\frac{1}{t^2}A(x)+\frac{1}{t}B(x,y)+C(t,x,y),
\end{align*}
where
\begin{align*}
    &x=(x_1,x_2,x_3,x_4,x_5),\ y=(y_1,y_2,y_3,y_4,y_5),\\
    &A(x)=(A_1(x),A_2(x),A_3(x),A_4(x),A_5(x)),\\
    &B(x,y)=(B_1(x,y),B_2(x,y),B_3(x,y),B_4(x,y),B_5(x,y))\\
    &C(t,x,y)=(C_1(t,x,y),C_2(t,x,y),C_3(t,x,y),C_4(t,x,y),C_5(t,x,y))
\end{align*}
and
\begin{align*}
&A_1(x) = \frac{1}{n+3}\left(\frac{x_1^2}{x_2x_3}-\frac{x_3}{x_2}\right),\\[0.5em]
&A_2(x) = -x_2+\frac12-\frac{1}{2(n+3)}\left(\frac{x_1}{x_3}+\frac{x_3}{x_1}\right)
-\frac{n-1}{4(n+3)}\left(\frac{x_4}{x_5}+\frac{x_5}{x_4}\right),\\[0.5em]
&A_3(x) = \frac{1}{2(n+3)}\left(\frac{x_3^2}{x_1x_2}-\frac{x_1}{x_2}\right),\\[0.5em]
&A_4(x) = \frac{1}{4(n+3)}\left(\frac{x_4^2}{x_2x_5}-\frac{x_5}{x_2}\right),\ 
A_5(x)= \frac{1}{4(n+3)}\left(\frac{x_5^2}{x_2x_4}-\frac{x_4}{x_2}\right),\\[0.5em]
&B_1(x,y) = -2y_1,\
B_3(x,y) = -2y_3,\
B_4(x,y) = -2y_4,\
B_5(x,y) = -2y_5,\\[0.5em]
&B_2(x,y) = -4y_2-\frac{x_2y_1}{x_1}-2\frac{x_2y_3}{x_3}
-2(n-1)\frac{x_2y_4}{x_4}-2(n-1)\frac{x_2y_5}{x_5},\\[0.5em]
&\begin{aligned}
    C_1(t,x,y) = &\,\frac{y_1^2}{x_1}-2\frac{y_1y_2}{x_2}-2\frac{y_1y_3}{x_3}-2(n-1)\frac{y_1y_4}{x_4}-2(n-1)\frac{y_1y_5}{x_5}-\lambda x_1\\
&\,+\frac{n-1}{4(n+1)(n+3)}\left((n-1)\frac{x_1^2}{x_3^2}+\frac{x_1^2}{x_4^2}+\frac{x_1^2}{x_5^2}\right)-\frac{x_2}{(n+3)x_3}t^2
+\frac{2}{n+3},
\end{aligned}\\[0.5em]
&\begin{aligned}C_2(t,x,y) = &-\frac{y_1y_2}{x_1}-2\frac{y_2y_3}{x_3}
-2(n-1)\frac{y_2y_4}{x_4}-2(n-1)\frac{y_2y_5}{x_5}-\lambda x_2\\
&+\frac{1}{2(n+3)}\left(\frac{x_2^2}{x_1x_3}+\frac{n-1}{2}\frac{x_2^2}{x_4x_5}\right)t^2,
\end{aligned}\\[0.5em]
&\begin{aligned}C_3(t,x,y) = &-\frac{y_1y_3}{x_1}-2\frac{y_2y_3}{x_2}
-2(n-1)\frac{y_3y_4}{x_4}-2(n-1)\frac{y_3y_5}{x_5}-\lambda x_3\\
&-\frac{n-1}{2(n+1)(n+3)}\left(\frac{n-1}{2}\frac{x_1}{x_3}-\frac{x_3^2}{x_4x_5}+\frac{x_4}{x_5}+\frac{x_5}{x_4}\right)-\frac{t^2x_2}{2(n+3)x_1}+\frac{1}{2},
\end{aligned}\\[0.5em]
&\begin{aligned}C_4(t,x,y) = &-2(n-2)\frac{y_4^2}{x_4}-\frac{y_1y_4}{x_1}-2\frac{y_2y_4}{x_2}-2\frac{y_3y_4}{x_3}
-2(n-1)\frac{y_4y_5}{x_5}-\lambda x_4\\
&-\frac{1}{2(n+1)(n+3)}\left(\frac12\frac{x_1}{x_4}-\frac{x_4^2}{x_3x_5}+\frac{x_3}{x_5}+\frac{x_5}{x_3}\right)-\frac{x_2}{4(n+3)x_5}t^2+\frac{1}{2},
\end{aligned}\\[0.5em]
&\begin{aligned}C_5(t,x,y) = &-2(n-2)\frac{y_5^2}{x_5}-\frac{y_1y_5}{x_1}-2\frac{y_2y_5}{x_2}-2\frac{y_3y_5}{x_3}
-2(n-1)\frac{y_4y_5}{x_4}-\lambda x_5\\
&-\frac{1}{2(n+1)(n+3)}\left(\frac12\frac{x_1}{x_5}-\frac{x_5^2}{x_3x_4}+\frac{x_3}{x_4}+\frac{x_4}{x_3}\right)-\frac{x_2}{4(n+3)x_4}t^2+\frac{1}{2}.
\end{aligned}
\end{align*}
In these variables, the conditions \eqref{reparametrized:final:conditions:model:B} are given by
\begin{equation*}
    x(0)=\left(\zeta_1^2,\frac{1}{n+3},\zeta_1^2,\xi_1^2,\xi_1^2\right),\ y(0)=(0,0,0,0,0).
\end{equation*}
A straightforward computation shows that   
\begin{align*}
    A(x(0))=2(dA)_{x(0)}(y(0))+B(x(0),y(0))=(0,0,0,0,0).
\end{align*}
Consider a power-series solution
\begin{equation*}
    x(t)=\sum\limits_{m=0}^{\infty}\frac{x^m}{m!}t^m,\ y(t)=\sum\limits_{m=0}^{\infty}\frac{y^m}{m!}t^m.
\end{equation*}
and write
\begin{align*}
\begin{split}
A(x(t))=\sum\limits_{m=0}^{\infty}\frac{A^m}{m!}t^m,\ B(x(t),y(t))=\sum\limits_{m=0}^{\infty}\frac{B^m}{m!}t^m,\ C(t,x(t),y(t))=\sum\limits_{m=0}^{\infty}\frac{C^m}{m!}t^m.
\end{split}
\end{align*}
Recall that
\[
\mathcal{L}_m=(m+1)\mathrm{Id}-\frac{2}{m+2}(dA)_{x(0)}-(\partial_yB)_{(x(0),y(0))}
\]
and
\[
\begin{aligned}
D_m=&\,2(m+1)C^m+\frac{2}{m+2}\left(A^{m+2}-(dA)_{x(0)}\cdot x^{m+2}\right)\\
&+2\left(B^{m+1}-\frac12(\partial_yB)_{(x(0),y(0))}( x^{m+2})\right).
\end{aligned}
\]
Then
\[
\det(\mathcal{L}_m)=\frac{m(m+1)(m+3)^3(m+4)^2(m+5)}{(m+2)^3},
\]
so \(\mathcal{L}_m\) is invertible for every \(m\geq 1\). As in the proof of Proposition \ref{proposition:local:solutions:model:A}, it follows that once $x^0,x^1,x^2$ are  fixed, all higher coefficients are uniquely determined by the equation 
\[
\mathcal{L}_m(x^{m+2})=D_m\Longleftrightarrow x^{m+2}=\mathcal{L}_m^{-1}(D_m),\ m\geq 1.
\] Here \[
x^0=x(0)=\left(\zeta_1^2,1/(n+3),\zeta_1^2,\xi_1^2,\xi_1^2\right),\ x^1=2y(0)=(0,0,0,0,0)
\]
and $x^2$ is any solution of the linear equation
\[\mathcal{L}_0(x^2)=D_0=((D_0)_1,(D_0)_2,(D_0)_3,(D_0)_4,(D_0)_5),\]
where 
\begin{align*}
    &(D_0)_1=(D_0)_3=-2\lambda\zeta_1^2+\frac{n+3}{2(n+1)}+\frac{n-1}{(n+1)(n+3)}\frac{\zeta_1^4}{\xi_1^4},\\
    &(D_0)_2=-\frac{2\lambda}{n+3},\\
    &(D_0)_4=(D_0)_5=-2\lambda\xi_1^2+1-\frac{3}{2(n+1)(n+3)}\frac{\zeta_1^2}{\xi_1^2}.
\end{align*}
Solving this equation, we obtain 
\begin{align*}
x^2=&\left((D_0)_1,\frac{1}{6}\left((D_0)_2-\frac{1}{(n+3)\zeta_1^2}(D_0)_1-\frac{4(n-1)}{3(n+3)\xi_1^2}(D_0)_4\right),0,\frac{(D_0)_4}{3},\frac{(D_0)_4}{3}\right)\\[0.3em]
&+s(-2,0,1,0,0),\ s\in\mathbb{R}.
\end{align*}
Thus, there are infinitely many local solutions. This proves $b).$\\

The proof of $a)$ is analogous and we omit it.
\end{proof}
Although the system \eqref{Einstein:f1:model:B}-\eqref{Einstein:f5:model:B} admits infinitely many local solutions near $t=0$ and $t=1$, such solutions are not necessarily Einstein metrics, since in Model {\bf B} the Einstein equations also include the condition \eqref{Einstein:nondiagonal:model:B}. This condition appears because the $H$-modules $\mathfrak{m}_1$ and $\mathfrak{m}_3$ are equivalent. Nevertheless, the analysis above will be used in the next proposition to show that Model {\bf B} does not admit smooth, globally defined $\mathrm{SU}(2)\times \mathrm{SU}(n+1)$-invariant Einstein metrics on $\mathbb{C}P^{2n+1}$ with totally geodesic singular orbits.
\begin{proposition}\label{nonexistence:model:B}
    There is no smooth, globally defined, $\mathrm{SU}(2)\times\mathrm{SU}(n+1)$-invariant Einstein metric\[g=dt^2+\sum\limits_{i=1}^5f_i(t)^2(\cdot,\cdot)\big{|}_{\mathfrak{m}_i\times\mathfrak{m}_i}\] on $\mathbb{C}P^{2n+1}$ with totally geodesic singular orbits.
\end{proposition}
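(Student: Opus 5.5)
We plan to argue by contradiction, using the symmetry of the reduced system under interchanging the indices $4$ and $5$. The idea is that every local solution near $Q_1$ has $f_4\equiv f_5$, whereas the smoothness conditions at $Q_0$ force $f_4(0)=0\neq\zeta_0=f_5(0)$.

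Suppose such a metric exists. As in Remark~\ref{remark:L0:automatic:model:A}, the totally geodesic hypothesis gives $L_0=0$ at both singular orbits. Since the metric is Einstein, the functions $f_i$ satisfy \eqref{Einstein:f1:model:B}--\eqref{Einstein:nondiagonal:model:B}, together with \eqref{initial:conditions:model:B} at $t=0$ and \eqref{final:conditions:model:B} at $t=1$. By \cite[Theorem 5.2]{DK} the metric is real-analytic. Hence, after the reparametrization $g_i(t)=f_i(1-t)$ and the change of variables used in the proof of Proposition~\ref{local:solutions:model:B}$\,b)$, the pair $(x(t),y(t))$ is a power-series solution of $x'=2y$, $y'=t^{-2}A(x)+t^{-1}B(x,y)+C(t,x,y)$. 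Its initial data are $x(0)=(\zeta_1^2,\tfrac1{n+3},\zeta_1^2,\xi_1^2,\xi_1^2)$ and $y(0)=0$. Its coefficients are therefore given by the recursion $\mathcal{L}_m(x^{m+2})=D_m$ for $m\ge 1$, with $x^2$ one of the solutions of $\mathcal{L}_0(x^2)=D_0$ computed there.

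Let $\sigma$ be the involution of $\mathbb{R}^5$ swapping the fourth and fifth coordinates, acting diagonally on $(x,y)$. We first check directly from the formulas for $A$, $B$, $C$ that $A\circ\sigma=\sigma\circ A$, $B\circ\sigma=\sigma\circ B$ and $C(t,\cdot)\circ\sigma=\sigma\circ C(t,\cdot)$. Indeed, $A_4\leftrightarrow A_5$, $B_4\leftrightarrow B_5$ and $C_4\leftrightarrow C_5$ are exchanged, and $A_j$, $B_j$, $C_j$ for $j=1,2,3$ are symmetric in the pairs $(x_4,x_5)$ and $(y_4,y_5)$. Equivalently, equations \eqref{Einstein:f4:model:B} and \eqref{Einstein:f5:model:B} are exchanged under $f_4\leftrightarrow f_5$, and the others are invariant. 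It follows that $(dA)_{x(0)}$ and $(\partial_yB)_{(x(0),y(0))}$ commute with $\sigma$, because $x(0)$ is $\sigma$-fixed. Hence $\mathcal{L}_m$ commutes with $\sigma$ for all $m$.

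Next we treat $x^2$. The explicit solution set of $\mathcal{L}_0(x^2)=D_0$ is $x^2=v+s(-2,0,1,0,0)$, where $v$ has equal fourth and fifth entries $(D_0)_4/3$. The kernel direction also has vanishing fourth and fifth entries, so every admissible $x^2$ is $\sigma$-fixed, whatever the value of $s$. By induction on $m$: if $x^0,\dots,x^{m+1}$ are $\sigma$-fixed, then $A^{m+2}$, $B^{m+1}$ and $C^m$ are $\sigma$-fixed, using the equivariance above and the fact that the $m$-th Taylor coefficient of $F(x(t))$ depends only on lower-order coefficients. So $D_m$ is $\sigma$-fixed. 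Invertibility of $\mathcal{L}_m$ for $m\ge1$, together with $\sigma\mathcal{L}_m=\mathcal{L}_m\sigma$, then gives that $x^{m+2}=\mathcal{L}_m^{-1}D_m$ is $\sigma$-fixed.

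Consequently $x_4(t)\equiv x_5(t)$ near $t=0$, that is, $f_4\equiv f_5$ near $t=1$. Since the $f_i$ are real-analytic on $(0,1)$ and, by smoothness at $Q_0$, continuous on $[0,1]$, the identity principle gives $f_4\equiv f_5$ on all of $[0,1]$. This contradicts \eqref{initial:conditions:model:B}, which requires $f_4(0)=0$ and $f_5(0)=\zeta_0>0$. Note that the argument never uses the non-diagonal equation \eqref{Einstein:nondiagonal:model:B}, so it rules out even the solutions of the diagonal system alone.

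The main obstacle is justifying rigorously that an arbitrary smooth global Einstein metric is captured by the power-series recursion at $t=1$. This requires that the variables $x,y$ (in particular $x_2=g_2^2/t^2$) are analytic at $t=0$, and not merely that $f_i$ is analytic on $(0,1)$. We will argue that analyticity of the metric on $M$ near $Q_1$, combined with the smoothness description in \cite{EW}, makes $g_2$ odd and analytic and the other $g_i$ even and analytic in $t$. The recursion then applies exactly as in part $c)$ of Proposition~\ref{proposition:local:solutions:model:A}.
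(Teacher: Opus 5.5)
Your proposal is correct and follows essentially the same route as the paper. Your involution $\sigma$ (the paper's $P$) commutes with $A$, $B$, $C$ and $\mathcal{L}_m$, every admissible $x^2$ is $\sigma$-fixed, induction gives $\sigma(x^m)=x^m$ for all $m$, and continuing $f_4=f_5$ analytically to $t=0$ contradicts $f_4(0)=0<\zeta_0=f_5(0)$; like you, the paper never uses the non-diagonal equation.

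One correction to the inductive step: $A^{m+2}$ and $B^{m+1}$ are not themselves $\sigma$-fixed, since they contain $(dA)_{x(0)}(x^{m+2})$ and $\tfrac12(\partial_yB)_{(x(0),y(0))}(x^{m+2})$. What is $\sigma$-fixed is the pair of differences $A^{m+2}-(dA)_{x(0)}(x^{m+2})$ and $B^{m+1}-\tfrac12(\partial_yB)_{(x(0),y(0))}(x^{m+2})$, together with $C^m$, and these are exactly what enter $D_m$, so your conclusion that $D_m$ is $\sigma$-fixed still holds.
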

\begin{proof}
Assume that such a metric $g$ exists, then the functions $f_i$ satisfy \eqref{Einstein:f1:model:B}-\eqref{Einstein:nondiagonal:model:B} and the boundary conditions \eqref{initial:conditions:model:B},\eqref{final:conditions:model:B}. Let
\[
g_i(t)=f_i(1-t),\ i=1,\dots,5
\]
and let $x_i,y_i$, $i=1,\dots,5$, be the variables introduced in the proof of Proposition \ref{local:solutions:model:B}. We will prove that
\[
x_4=x_5.
\]

Since $g$ is Einstein, it is real analytic. Hence each $f_i$, and therefore each $x_i$ and $y_i$, admits a power-series expansion on $[0,\epsilon)$ for some $\epsilon>0$. Let $x^m,y^m,A^m,B^m,C^m$ ($m\geq 0$) be as in the proof of the previous proposition, and define $P:\mathbb{R}^5\to\mathbb{R}^5$ by
\begin{equation}\label{map:p}
P(x_1,x_2,x_3,x_4,x_5)=(x_1,x_2,x_3,x_5,x_4).
\end{equation}
A direct computation shows that
\begin{equation}\label{P:invariance:A:B:C}
A(Px)=P(A(x)),\ B(Px,Py)=P(B(x,y)),\ C(t,Px,Py)=P(C(t,x,y)).
\end{equation}
Moreover,
\[
P(x(0))=x(0),\ P(y(0))=y(0).
\]
Therefore,
\[
(dA)_{x(0)}P=P(dA)_{x(0)},\
(\partial_yB)_{(x(0),y(0))}P=P(\partial_yB)_{(x(0),y(0))},
\]
and consequently
\[
P\mathcal{L}_m=\mathcal{L}_mP,\ \forall m\geq 0.
\]

We will prove by induction that
\begin{equation}\label{induction}
P(D_m)=D_m
\ \text{and}\ P(x^{m+2})=x^{m+2},
\ \forall m\geq 0.
\end{equation}

For $m=0$, we already know from the proof of the previous proposition that
\[
(D_0)_4=(D_0)_5=-2\lambda\xi_1^2+1-\frac{3}{2(n+1)(n+3)}\frac{\zeta_1^2}{\xi_1^2},
\]
and
\[
(x^2)_4=(x^2)_5=\frac{(D_0)_4}{3}.
\]
Hence $P(D_0)=D_0$ and $P(x^2)=x^2$. Now let $m\geq 1$ and assume that
\[
P(D_i)=D_i,\ P(x^{i+2})=x^{i+2},\ 0\leq i\leq m-1.
\]
Since
\[
P(x^0)=P(x(0))=x(0)=x^0
\]
and
\[
P(x^1)=P(2y^0)=2P(y(0))=2y(0)=x^1,
\]
it follows that
\[
P(x^k)=x^k,\ 0\leq k\leq m+1.
\]
Therefore,
\[
P(x(t))-x(t)=\sum_{k=m+2}^{\infty}\frac{P(x^k)-x^k}{k!}t^k=O(t^{m+2}),
\]
and, since $y^k=\frac12x^{k+1}$,
\[
P(y(t))-y(t)=\sum_{k=m+1}^{\infty}\frac{P(y^k)-y^k}{k!}t^k
=\sum_{k=m+1}^{\infty}\frac{P(x^{k+1})-x^{k+1}}{2k!}t^k
=O(t^{m+1}).
\]

Let $U$, $V$, and $W$ be convex neighborhoods of $x(0)$, $(x(0),y(0))$, and $(0,x(0),y(0))$, respectively, on which the derivatives of $A$, $B$, and $C$ are bounded. Set
\[
M_A:=\sup_{x\in U} ||(dA)_x||,\ M_B:=\sup_{(x,y)\in V} ||(dB)_{(x,y)}||,\ M_C:=\sup_{(s,x,y)\in W} ||(dC)_{(s,x,y)}||.
\]
Then, for $t$ small enough, the mean value inequality gives
\begin{align*}
&||A(P(x(t)))-A(x(t))||\leq M_A\, ||P(x(t))-x(t)||=O(t^{m+2}),\\[0.2em]
&\begin{aligned}
||B(P(x(t)),P(y(t)))-B(x(t),y(t))||&\leq M_B\bigl(||P(x(t))-x(t)||+||P(y(t))-y(t)||\bigr)\\[0.2em]
&=O(t^{m+1}),
\end{aligned}\\[0.2em]
&\begin{aligned}
||C(t,P(x(t)),P(y(t)))-C(t,x(t),y(t))||&\leq M_C\bigl(||P(x(t))-x(t)||+||P(y(t))-y(t)||\bigr)\\[0.2em]
&=O(t^{m+1}).
\end{aligned}
\end{align*}
Using \eqref{P:invariance:A:B:C}, we obtain
\[
P(A^k)=A^k,\ k=0,\dots,m+1,\ P(B^j)=B^j,\ P(C^j)=C^j,\ j=0,\dots,m.
\]

Next,
\begin{align*}
A^{m+2}
=&\left.\frac{d^{m+2}}{dt^{m+2}}A(x(t))\right|_{t=0}\\
=&\sum_{k=1}^{m+2}\sum_{\vec{r}\in J^k_{m+2}}
\frac{(m+2)!}{k!r_1!\cdots r_k!}
(d^kA)_{x(0)}(x^{r_1},\dots,x^{r_k})\\
=&(dA)_{x(0)}(x^{m+2})\\
&+\sum_{k=2}^{m+2}\sum_{\vec{r}\in J^k_{m+2}}
\frac{(m+2)!}{k!r_1!\cdots r_k!}
(d^kA)_{x(0)}(x^{r_1},\dots,x^{r_k}),
\end{align*}
where
\[
J^k_{m+2}:=\{\vec{r}=(r_1,\dots,r_k)\in\mathbb{N}^k:r_1+\cdots+r_k=m+2\},
\]
and
\[
\begin{aligned}
(d^kA)_{x(0)}(w_1,\dots,w_k)
:={}&\left.\frac{\partial^k}{\partial t_1\cdots\partial t_k}
A(x(0)+t_1w_1+\cdots+t_kw_k)\right|_{(t_1,\dots,t_k)=(0,\dots,0)}.
\end{aligned}
\]
By the linearity of \(P\) and the identities \(A(Px)=P(A(x)),\ P(x(0))=x(0),\) we have
\[
\begin{aligned}
P(d^kA)_{x(0)}(w_1,\dots,w_k)
=(d^kA)_{x(0)}(P(w_1),\dots,P(w_k))
\end{aligned}
\]
for every \(w_1,\dots,w_k\in\mathbb{R}^5\). Moreover, \(P(x^a)=x^a\) for every \(0\leq a\leq m+1\). Therefore each term in the last sum is fixed by \(P\), and so
\[
P\bigl(A^{m+2}-(dA)_{x(0)}(x^{m+2})\bigr)
=
A^{m+2}-(dA)_{x(0)}(x^{m+2}).
\]

A similar argument applies to \(B^{m+1}\). In fact,
\begin{align*}
B^{m+1}
=&\left.\frac{d^{m+1}}{dt^{m+1}}B(x(t),y(t))\right|_{t=0}\\
=&\sum_{\begin{subarray}{c}p,q\geq 0\\1\leq p+q\leq m+1\end{subarray}}
\sum_{(\vec{r},\vec{s})\in J^{p,q}_{m+1}}
\frac{(m+1)!}{p!q!r_1!\cdots r_p!s_1!\cdots s_q!}\\
&\quad\cdot
(d^{p,q}B)_{(x(0),y(0))}(x^{r_1},\dots,x^{r_p},y^{s_1},\dots,y^{s_q})\\
=&(\partial_yB)_{(x(0),y(0))}(y^{m+1})\\
&+\sum_{\begin{subarray}{c}p,q\geq 0\\1\leq p+q\leq m+1\\(p,q)\neq (0,1)\end{subarray}}
\sum_{(\vec{r},\vec{s})\in J^{p,q}_{m+1}}
\frac{(m+1)!}{p!q!r_1!\cdots r_p!s_1!\cdots s_q!}\\
&\qquad\cdot
(d^{p,q}B)_{(x(0),y(0))}(x^{r_1},\dots,x^{r_p},y^{s_1},\dots,y^{s_q})\\
=&\frac12(\partial_yB)_{(x(0),y(0))}(x^{m+2})\\
&+\sum_{\begin{subarray}{c}p,q\geq 0\\1\leq p+q\leq m+1\\(p,q)\neq (0,1)\end{subarray}}
\sum_{(\vec{r},\vec{s})\in J^{p,q}_{m+1}}
\frac{(m+1)!}{p!q!r_1!\cdots r_p!s_1!\cdots s_q!2^q}\\
&\qquad\cdot
(d^{p,q}B)_{(x(0),y(0))}(x^{r_1},\dots,x^{r_p},x^{s_1+1},\dots,x^{s_q+1}),
\end{align*}
where
\[
J^{p,q}_{m+1}
=
\left\{
(\vec{r},\vec{s})=(r_1,\dots,r_p,s_1,\dots,s_q)\in\mathbb{N}^{p+q}
:
r_1+\cdots+r_p+s_1+\cdots+s_q=m+1
\right\},
\]
and

\[
\begin{aligned}
&(d^{p,q}B)_{(x(0),y(0))}(w_1,\dots,w_p,z_1,\dots,z_q)\\
&\qquad:=\left.
\frac{\partial^{p+q}}{\partial u_1\cdots\partial u_p\,\partial v_1\cdots\partial v_q}
B\left(
x(0)+\sum_{i=1}^pu_iw_i,\,
y(0)+\sum_{j=1}^qv_jz_j
\right)\right|_{(\vec{u},\vec{v})=(\vec{0},\vec{0})}.
\end{aligned}
\]
Using again the linearity of \(P\) and the identities \(B(Px,Py)=P(B(x,y)),\) we obtain
\[
\begin{aligned}
&P(d^{p,q}B)_{(x(0),y(0))}(w_1,\dots,w_p,z_1,\dots,z_q)\\
&\qquad=(d^{p,q}B)_{(x(0),y(0))}(P(w_1),\dots,P(w_p),P(z_1),\dots,P(z_q))
\end{aligned}
\]
for every \(w_1,\dots,w_p,z_1,\dots,z_q\in\mathbb{R}^5\). Since \(P(x^a)=x^a\) for every \(0\leq a\leq m+1\), it follows that
\[
P\left(B^{m+1}-\frac12(\partial_yB)_{(x(0),y(0))}(x^{m+2})\right)
=
B^{m+1}-\frac12(\partial_yB)_{(x(0),y(0))}(x^{m+2}).
\]

Combining the previous identities we obtain
\[
P(D_m)=D_m.
\]
Since \(m\geq 1\), the operator \(\mathcal{L}_m\) is invertible and since \(P\mathcal{L}_m=\mathcal{L}_mP,\) then
\[
P\mathcal{L}_m^{-1}=\mathcal{L}_m^{-1}P.
\]
Hence
\[
P(x^{m+2})
=
P(\mathcal{L}_m^{-1}(D_m))
=
\mathcal{L}_m^{-1}(P(D_m))
=
\mathcal{L}_m^{-1}(D_m)
=
x^{m+2}.
\]
This proves \eqref{induction}. Thus \(P(x^m)=x^m\) for every \(m\geq 0\). It follows that \(P(x(t))=x(t)\) for \(t\) in a neighborhood of \(0\). In particular, \(x_4(t)=x_5(t)\)
for \(t\) near \(0\). Since \(x_4-x_5\) is real-analytic on \((0,1)\), it must vanish identically on \((0,1)\). Hence \(x_4(t)=x_5(t),\ \forall t\in(0,1).\) By continuity, \(x_4(1)=x_5(1),\)
but
\[
x_4(1)=g_4(1)^2=f_4(0)^2=0,
\ x_5(1)=g_5(1)^2=f_5(0)^2=\zeta_0^2>0.
\]
This contradiction completes the proof.
\end{proof}
\begin{remark}\label{remark:only:Q1:model:B}
The assumption that both singular orbits are totally geodesic in the proposition above is not needed. In fact, the argument only uses the boundary conditions at \(Q_1\) (coming from the assumption that \(Q_1\) is totally geodesic) to obtain \(x_4=x_5\) on the whole interval. The contradiction arises from the smoothness conditions at \(Q_0\), namely \(f_4(0)=0\ \text{and}\ f_5(0)=\zeta_0>0,\) which hold for any smooth, globally defined $\mathrm{SU}(2)\times\mathrm{SU}(n+1)$-invariant metric. Therefore, the same proof shows that there is no smooth, globally defined, \(\mathrm{SU}(2)\times\mathrm{SU}(n+1)\)-invariant Einstein metric on \(\mathbb{C}P^{2n+1}\) for which \(Q_1\) is totally geodesic.
\end{remark}
\subsection{Model C}\label{Einstein:metrics:C}
Consider the spaces $\mathfrak{m}_1,\mathfrak{m}_2,\mathfrak{m}_3$ defined in \eqref{summands:model:C} and let $(\cdot,\cdot)$ be the inner product defined in \eqref{inner:product:model:C}. An $(\cdot,\cdot)$-orthonormal basis adapted to $\mathfrak{m}_1\oplus\mathfrak{m}_2\oplus\mathfrak{m}_3$ is $\mathcal{B}=\mathcal{B}_1\cup\mathcal{B}_2\cup\mathcal{B}_3$, where
\begin{align*}
    &\mathcal{B}_1=\left\{\frac{1}{2}\left(E_{n+1,n}-E_{n,n+1}\right)\right\},\\
    &\mathcal{B}_2=\left\{\frac{1}{2}\left(E_{nj}-E_{jn}\right):j=1,\ldots,n-1\right\},\\
    &\mathcal{B}_3=\left\{\frac{1}{2}\left(E_{n+1,j}-E_{j,n+1}\right):j=1,\ldots,n-1\right\}.
\end{align*}
For this basis,
\[
    [ijk]=\left\{\begin{array}{ll}
        \displaystyle \frac{n-1}{4}, & \textnormal{if}\ \{i,j,k\}=\{1,2,3\},\\[0.4em]
        0, & \textnormal{otherwise}.
    \end{array}\right.
\]
Since the Killing form of $\mathfrak{so}(n+1)$ is $B(X,Y)=(n-1)\mathrm{Tr}(XY)$, it follows that the constants $b_i$ in \eqref{equation:b_i} are 
\[
b_i=\frac{n-1}{2},\ i=1,2,3.
\]

Now, consider an $\mathrm{SO}(n+1)$-invariant metric
\begin{equation}\label{metric:g:model:C}
g \;=\; dt^2
  + f_1(t)^2\,(\cdot,\cdot)\big|_{\mathfrak{m}_1\times\mathfrak{m}_1}
  + f_2(t)^2\,(\cdot,\cdot)\big|_{\mathfrak{m}_2\times\mathfrak{m}_2}
  + f_3(t)^2\,(\cdot,\cdot)\big|_{\mathfrak{m}_3\times\mathfrak{m}_3}
\end{equation}
on $\mathbb{C}P^n\setminus\{Q_0,Q_1\}$, where $Q_0,Q_1$ are given by \eqref{singular:orbits:model:C}. A direct computation shows that $g$ satisfies \eqref{Einstein2}-\eqref{Einstein3} if and only if
\begin{align}
    &\frac{f_1''}{f_1}-\frac{n-1}{4f_1^2}-\frac{n-1}{8}\left(\frac{f_1^2}{f_2^2f_3^2}-\frac{f_2^2}{f_1^2f_3^2}-\frac{f_3^2}{f_1^2f_2^2}\right)
    +(n-1)\frac{f_1'f_2'}{f_1f_2}+(n-1)\frac{f_1'f_3'}{f_1f_3}=-\lambda, \label{Einstein:CP^n:f_1}\\[0.8em]
    &\frac{f_2''}{f_2}-\frac{n-1}{4f_2^2}-\frac{1}{8}\left(\frac{f_2^2}{f_1^2f_3^2}-\frac{f_1^2}{f_2^2f_3^2}-\frac{f_3^2}{f_1^2f_2^2}\right)
    +(n-2)\frac{f_2'^2}{f_2^2}+(n-1)\frac{f_2'f_3'}{f_2f_3}+\frac{f_1'f_2'}{f_1f_2}=-\lambda, \label{Einstein:CP^n:f_2}\\[0.8em]
    &\frac{f_3''}{f_3}-\frac{n-1}{4f_3^2}-\frac{1}{8}\left(\frac{f_3^2}{f_1^2f_2^2}-\frac{f_1^2}{f_2^2f_3^2}-\frac{f_2^2}{f_1^2f_3^2}\right)
    +(n-2)\frac{f_3'^2}{f_3^2}+(n-1)\frac{f_2'f_3'}{f_2f_3}+\frac{f_1'f_3'}{f_1f_3}=-\lambda. \label{Einstein:CP^n:f_3}
\end{align}
Using \eqref{general:initial:conditions} together with \eqref{derivative:psi0:model:C}, we note that the restriction
$(\cdot,\cdot)\big|_{\mathfrak{m}_1\times\mathfrak{m}_1}$ is already normalized so that
$\mathfrak{p}_0=\mathfrak{m}_1\cong T_{\psi_0(H)}\mathbb{S}^{1}$ carries the standard metric of curvature one. Therefore,
$g$ extends smoothly to $Q_0$ provided that
\begin{align}
\big(f_1(0),f_2(0),f_3(0),f_1'(0),f_2'(0),f_3'(0)\big)
  \;=\; (0,\zeta_0,\zeta_0,1,0,0),\ f_1''(0)=0, \label{initial:values:0:model:C}
\end{align}
where $\zeta_0>0$ is arbitrary.\\

At $t=1$ the situation is different, since $(\cdot,\cdot)$ does not induce the curvature-one metric on
$\mathfrak{p}_1=\mathfrak{m}_2$. By \eqref{derivative:psi1:model:C}, the rescaled inner product
$\frac14(\cdot,\cdot)\big|_{\mathfrak{m}_2\times\mathfrak{m}_2}$ coincides with the standard metric of curvature one on
$\mathfrak{p}_1\cong T_{\psi_1(H)}\mathbb{S}^{n-1}$. Reparametrizing by $t\mapsto 1-t$ and applying again
\eqref{general:initial:conditions}, we obtain that $g$ extends smoothly to $Q_1$ provided that
\begin{align}
\big(f_1(1),f_2(1),f_3(1),f_1'(1),f_2'(1),f_3'(1)\big)
  \;=\; (\zeta_1,0,\zeta_1,0,-\tfrac12,0),\ f_2''(1)=0, \label{initial:values:1:model:C}
\end{align}
where $\zeta_1>0$ is arbitrary.

\begin{proposition}\label{proposition:local:solutions:model:C}
The following statements hold:
\begin{itemize}
    \item[$a)$] The system \eqref{Einstein:CP^n:f_1}-\eqref{Einstein:CP^n:f_3}, subject to the initial conditions \eqref{initial:values:0:model:C}, admits a unique solution defined on $[0,\epsilon_0)$ for some $\epsilon_0>0.$
    \item[$b)$] The system \eqref{Einstein:CP^n:f_1}-\eqref{Einstein:CP^n:f_3}, subject to the final conditions \eqref{initial:values:1:model:C}, admits a unique solution defined on $(1-\epsilon_1,1]$ for some $\epsilon_1>0.$
\end{itemize}
\end{proposition}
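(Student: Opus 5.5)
I will follow the scheme already used for Model~{\bf A} and Model~{\bf B}, i.e.\ the Eschenburg--Wang singular initial value framework of \cite[Sections 3 and 5]{EW}.

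\emph{Part $a)$.} Near $t=0$ only $f_1$ collapses. I introduce
\[
x_1=\frac{f_1^2}{t^2},\quad y_1=\frac{f_1f_1'}{t^2}-\frac{f_1^2}{t^3},\qquad x_i=f_i^2,\quad y_i=f_if_i'\quad (i=2,3).
\]
Then I rewrite \eqref{Einstein:CP^n:f_1}--\eqref{Einstein:CP^n:f_3} in the form \eqref{EW:equation}, namely $x'=2y$, $y'=t^{-2}A(x)+t^{-1}B(x,y)+C(t,x,y)$, with $A,B,C$ analytic near the initial point. The conditions \eqref{initial:values:0:model:C} become
\[
x(0)=(1,\zeta_0^2,\zeta_0^2),\qquad y(0)=0.
\]
I will check the two compatibility identities $A(x(0))=0$ and $2(dA)_{x(0)}y(0)+B(x(0),y(0))=0$. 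The first should hold because of the terms $-\frac{n-1}{4f_1^2}$ and $\frac{n-1}{8}\frac{f_2^2+f_3^2}{f_1^2f_3^2}$-type contributions: with $f_2=f_3$ and $f_1'=1$ they combine with the $(n-1)f_1'f_i'/(f_1f_i)$ terms to cancel the $t^{-2}$ singularity.

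Next I compute $\mathcal{L}_m$ as in \eqref{Lm:definition}. The decisive point, and the one that distinguishes Model~{\bf C} from Models~{\bf A} and~{\bf B}, is that here I expect $\mathcal{L}_m$ to be invertible for \emph{all} $m\ge 0$, including $m=0$.

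Heuristically this is plausible. In Model~{\bf A} the kernel of $\mathcal{L}_0$ came from a direction mixing two collapsing summands, $\mathfrak{m}_1$ and $\mathfrak{m}_3$, which is a freedom in the second-order data of the collapsing sphere metric. Here $\mathfrak{p}_0=\mathfrak{m}_1$ is one-dimensional, so no such freedom exists. Also, $\mathfrak{n}_0=\mathfrak{m}_2\oplus\mathfrak{m}_3$ is $K_0$-irreducible, which forces $x_2=x_3$ at $t=0$ and leaves only the single free parameter $\zeta_0$.

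Concretely, I will compute $\det(\mathcal{L}_m)$ as an explicit rational function of $m$ and $n$, and verify that it is nonzero for all $m\ge0$ and $n\ge 2$. Then $x^0=x(0)$, $x^1=2y(0)=0$, and $x^{m+2}=\mathcal{L}_m^{-1}(D_m)$ for every $m\ge0$ determine the formal power series uniquely. Convergence on some $[0,\epsilon_0)$ follows from \cite[Section 5]{EW}, as invoked in Proposition~\ref{proposition:local:solutions:model:A}.

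For uniqueness among all smooth (not merely analytic) solutions, I will invoke the uniqueness part of the Eschenburg--Wang theorem. When the only obstruction indices, i.e.\ the nonnegative eigenvalues of the indicial operator, are absent or fixed by the initial data, the solution of the singular initial value problem is unique.

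\emph{Part $b)$.} I reparametrize by $g_i(t)=f_i(1-t)$, which leaves the system invariant. The conditions \eqref{initial:values:1:model:C} then become $g_2(0)=0$, $g_2'(0)=\tfrac12$, $g_2''(0)=0$, and $(g_1,g_3)(0)=(\zeta_1,\zeta_1)$ with $g_1'(0)=g_3'(0)=0$. I use the analogous variables, with $x_2=g_2^2/t^2$ collapsing and $x(0)=(\zeta_1^2,\tfrac14,\zeta_1^2)$. The collapsing sphere is now $\mathbb{S}^{n-1}$ with irreducible isotropy $\mathfrak{m}_2$, so again I expect no kernel at $m=0$.

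I will verify the compatibility conditions for $A$ and $B$, compute $\det\mathcal{L}_m$, and show it is nonvanishing for $m\ge0$. Uniqueness and existence then follow exactly as in part $a)$.

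\emph{Main obstacle.} I expect the hardest step to be the explicit computation of $(dA)_{x(0)}$ and $(\partial_yB)$, and from them the determinant of $\mathcal{L}_m$. In particular, I must show that it has no root at $m=0$, nor at any $m\ge1$, for every $n\ge 2$. If a nonnegative integer root did appear, say from the mode $x_2-x_3$, I would check that the corresponding component of $D_m$ vanishes. I would also use the discrete symmetry of the system exchanging the roles of $f_2$ and $f_3$ (at $t=0$) to argue that the solution stays in the invariant subspace $x_2=x_3$ up to the relevant order, as was done with the map $P$ in Proposition~\ref{nonexistence:model:B}.
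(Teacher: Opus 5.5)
Your part $a)$ follows the paper's proof: same variables, $x(0)=(1,\zeta_0^2,\zeta_0^2)$, and indeed $\det\mathcal{L}_m=(m+1)(m+3)^2$ for all $m\ge 0$, so that part is fine. One small slip in your heuristic: $A_1(x(0))=0$ comes from $\frac{n-1}{4}-\frac{n-1}{8}\big(\frac{x_2}{x_3}+\frac{x_3}{x_2}\big)$ vanishing at $x_2=x_3$. The $f_1'f_i'$ terms only feed $B$ and $C$.

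Part $b)$, however, fails at exactly the step you call decisive: there $\mathcal{L}_0$ is singular. In the variables $x_1=g_1^2$, $x_2=g_2^2/t^2$, $x_3=g_3^2$ one finds
\[
A_1=\frac{n-1}{8}\Big(\frac{x_1^2}{x_2x_3}-\frac{x_3}{x_2}\Big),\quad
A_2=\frac{n-1}{4}-\frac{1}{8}\Big(\frac{x_1}{x_3}+\frac{x_3}{x_1}\Big)-(n-2)x_2,\quad
A_3=\frac{1}{8}\Big(\frac{x_3^2}{x_1x_2}-\frac{x_1}{x_2}\Big),
\]
\[
B_1=-(n-1)y_1,\quad B_2=-2(n-1)y_2-(n-1)\frac{x_2y_3}{x_3}-\frac{x_2y_1}{x_1},\quad B_3=-(n-1)y_3 .
\]
At $x(0)=(\zeta_1^2,\frac14,\zeta_1^2)$, the second column of $\mathcal{L}_m$ is $\big(0,\,m+2n-1+\frac{2(n-2)}{m+2},\,0\big)^{T}$. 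On the $(x_1,x_3)$-block,
\[
\mathcal{L}_m=(m+n)\,\mathrm{Id}-\frac{2}{m+2}\begin{pmatrix} n-1 & -(n-1)\\ -1 & 1\end{pmatrix},
\]
with eigenvalues $m+n$ and $\frac{m(m+n+2)}{m+2}$. Hence $\det\mathcal{L}_m=\frac{m(m+n)(m+n+2)}{m+2}\big(m+2n-1+\frac{2(n-2)}{m+2}\big)$ vanishes at $m=0$, and $\ker\mathcal{L}_0=\mathrm{span}\{(n-1,0,-1)\}$.

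Moreover, $D_0=2C(0,x(0),0)$ satisfies $(D_0)_1=(D_0)_3=\frac{n-1}{2}-2\lambda\zeta_1^2$. This is precisely the solvability condition, so $x^2$ is determined only up to $s(n-1,0,-1)$, $s\in\mathbb{R}$. Since $\mathcal{L}_m$ is invertible for $m\ge 1$, each $s$ yields a power-series solution, convergent by \cite[Section 5]{EW}. This is the same mechanism as the kernels $(-2,0,1,0,0)$ and $(-4,0,1,0,0)$ found at $t=1$ in Models B and D.

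Geometrically, $\mathfrak{n}_1=\mathfrak{m}_1\oplus\mathfrak{m}_3\cong\mathbb{R}^n$ is $K_1$-irreducible but splits under $H$ into pieces of dimensions $1$ and $n-1$. The traceless $H$-invariant endomorphism $(n-1)\mathrm{Id}_{\mathfrak{m}_1}-\mathrm{Id}_{\mathfrak{m}_3}$ corresponds to a degree-two spherical harmonic on $K_1/H=\mathbb{S}^{n-1}$ (indicial exponent $2$, i.e.\ $m=0$). Smoothness at $Q_1$ permits $f_1^2-f_3^2$ to vanish to exactly second order, so \eqref{initial:values:1:model:C} does not suppress this mode. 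Your heuristic only inspected the collapsing summand $\mathfrak{m}_2$; the kernel lives on the non-collapsing side.

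Your fallback cannot repair this, for three reasons:
\begin{itemize}
\item The degenerate mode is $x_1$ versus $x_3$, not $x_2$ versus $x_3$.
\item There is no symmetry exchanging $\mathfrak{m}_1$ and $\mathfrak{m}_3$, since their dimensions differ.
\item Verifying that $D_0$ has no cokernel component gives a one-parameter family, i.e.\ existence without uniqueness.
\end{itemize}
So uniqueness in $b)$ cannot be obtained along this route, and by the computation above it is in fact false. The paper's one-line ``arguing analogously to part $a)$'' for $b)$ hides the same problem. Only part $a)$ is used later, in Proposition~\ref{non:existence:model:C}.
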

\begin{proof} We follow the same argument as in the proof of Proposition \ref{proposition:local:solutions:model:A}. We use the change of variables
\begin{equation}\label{change:variables:0:model:C}
x_1=\frac{f_1^2}{t^2},\ x_2=f_2^2,\ x_3=f_3^2,\
y_1=\frac{f_1f_1'}{t^2}-\frac{f_1^2}{t^3},\ y_2=f_2f_2',\ y_3=f_3f_3'.
\end{equation}
In these variables, the Einstein system \eqref{Einstein:CP^n:f_1}-\eqref{Einstein:CP^n:f_3} takes the form
\begin{equation}\label{EW:form:model:C}
x'=2y,\quad
y'=\frac{1}{t^2}A(x)+\frac{1}{t}B(x,y)+C(t,x,y),
\end{equation}
but now
\begin{align*}
    &A(x)=(A_1,A_2,A_3),\ B(x,y)=(B_1,B_2,B_3),\ C(t,x,y)=(C_1,C_2,C_3),\\[2pt]
    &A_1(x)=\frac{n-1}{4}-\frac{n-1}{8}\Big(\frac{x_2}{x_3}+\frac{x_3}{x_2}\Big),\
      A_2(x)=\frac{x_2^2}{8x_1x_3}-\frac{x_3}{8x_1},\
      A_3(x)=\frac{x_3^2}{8x_1x_2}-\frac{x_2}{8x_1},\\[2pt]
    &B_1(x,y)=-2y_1-(n-1)\Big(\frac{x_1y_2}{x_2}+\frac{x_1y_3}{x_3}\Big),\
      B_2(x,y)=-y_2,\ B_3(x,y)=-y_3,\\[2pt]
    &C_1(t,x,y)=\frac{y_1^2}{x_1}-(n-1)\Big(\frac{y_1y_2}{x_2}+\frac{y_1y_3}{x_3}\Big)-\lambda x_1+\frac{t^2x_1^2}{x_2x_3},\\
    &C_2(t,x,y)=-(n-3)\frac{y_2^2}{x_2}+\frac{n-1}{4}-(n-1)\frac{y_2y_3}{x_3}-\frac{y_1y_2}{x_1}-\lambda x_2-\frac{t^2x_1}{8x_3},\\
    &C_3(t,x,y)=-(n-3)\frac{y_3^2}{x_3}+\frac{n-1}{4}-(n-1)\frac{y_2y_3}{x_2}-\frac{y_1y_3}{x_1}-\lambda x_3-\frac{t^2x_1}{8x_2}.
\end{align*}
The initial conditions \eqref{initial:values:0:model:C} become
\begin{equation}\label{new:initial:conditions:model:C}
(x(0),y(0))=\big((1,\zeta_0^2,\zeta_0^2),(0,0,0)\big).
\end{equation}
As before, at the initial point, we have
\[
A(1,\zeta_0^2,\zeta_0^2)=0,
\quad
2(dA)_{(1,\zeta_0^2,\zeta_0^2)}\cdot(0,0,0)+B(1,\zeta_0^2,\zeta_0^2,0,0,0)=0,
\]
and $A,B,C$ are real-analytic. Hence, by \cite[Section 5]{EW}, there exists a power-series solution defined on $[0,\epsilon_0)$ for some $\epsilon_0>0$.
Uniqueness follows from the invertibility, for $m\ge0$, of
\[
\mathcal{L}_m:=(m+1)\mathrm{Id}-\frac{2}{m+2}\,(dA)_{(1,\zeta_0^{2},\zeta_0^{2})}-(\partial_yB)_{(1,\zeta_0^{2},\zeta_0^{2},0,0,0)},
\]
since in this case 
\[
\det(\mathcal{L}_m)=(m+1)(m+3)^2>0,\ \forall\,m\geq0.\]
This proves $a).$\\

For part $b),$ set $g_i(t):=f_i(1-t)$ and adopt the corresponding variables
\[
x_1=g_1^2,\ x_2=\frac{g_2^2}{t^2},\ x_3=g_3^2,\
y_1=g_1g_1',\ y_2=\frac{g_2g_2'}{t^2}-\frac{g_2^2}{t^3},\ y_3=g_3g_3'.
\]
Arguing analogously to part $a),$ we obtain the existence and uniqueness of solutions on $(1-\epsilon_1,1]$ for some $\epsilon_1>0$.
\end{proof}

\begin{remark}
The previous proposition shows that, once the smoothness conditions at a singular orbit are imposed, the Einstein equations admit a unique real-analytic solution on a neighborhood of that orbit. In other words, the boundary data determined by smoothness fix the power series expansions of $(f_1,f_2,f_3)$ and leave no free parameters. Therefore, a smooth $\mathrm{SO}(n+1)$-invariant Einstein metric on the whole $\mathbb{C}P^n$ would require the two one-sided solutions to agree on $(0,1)$ while satisfying the regularity conditions at both boundary points. As shown in Proposition \ref{non:existence:model:C}, this does not happen.
\end{remark}
%\begin{remark}\label{remark:L0:automatic:model:C}
%Recall from \eqref{extension:conditions} that, in the smooth extension problem at a singular orbit \(Q_i\), the endomorphism \(L_i\) is a prescribed traceless symmetric operator on \(\mathfrak{n}_i\). In this paper we restrict to the case \(L_i=0\). As in Model {\bf A}, this restriction is also automatic in the present model. Indeed, \(\mathfrak{n}_0=\mathfrak{m}_2\oplus\mathfrak{m}_3\) is \(K_0\)-irreducible, so any \(K_0\)-equivariant symmetric endomorphism of \(\mathfrak{n}_0\) must be a multiple of the identity. Since \(L_0\) is traceless, it follows that \(L_0=0\). Similarly, \(\mathfrak{n}_1=\mathfrak{m}_1\oplus\mathfrak{m}_3\) is \(K_1\)-irreducible, and therefore \(L_1=0\). Consequently, any smooth, globally defined \(\mathrm{SO}(n+1)\)-invariant Einstein metric on \(\mathbb{C}P^n\) necessarily satisfies the boundary conditions \eqref{initial:values:0:model:C} and \eqref{initial:values:1:model:C} for some \(\zeta_0,\zeta_1>0\).
%\end{remark}
\begin{proposition}\label{non:existence:model:C}
    There exists no smooth, globally defined, $\textnormal{SO}(n+1)$-invariant Einstein metric on $\mathbb{C}P^n$ for which $Q_0$ is totally geodesic.
\end{proposition}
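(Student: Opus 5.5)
Assume such a metric $g$ exists. Since $Q_0$ is totally geodesic, the functions $f_1,f_2,f_3$ satisfy \eqref{Einstein:CP^n:f_1}-\eqref{Einstein:CP^n:f_3} together with the initial conditions \eqref{initial:values:0:model:C} for some $\zeta_0>0$. At $Q_1$ we do not assume total geodesy. We use only the part of the smoothness conditions that holds for any smooth invariant metric: $f_2$ vanishes at $t=1$, while $f_3(1)>0$. Indeed, $\mathfrak{m}_2=\mathfrak{p}_1$ is tangent to the collapsing sphere $K_1/H\approx\mathbb{S}^{n-1}$, and $\mathfrak{m}_3\subseteq\mathfrak{n}_1$ is tangent to $Q_1$.

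By \cite[Theorem 5.2]{DK}, $g$ is real-analytic, so in the variables \eqref{change:variables:0:model:C} the solution is a power series near $t=0$. By Proposition \ref{proposition:local:solutions:model:C} $a)$, it is the unique power-series solution of \eqref{EW:form:model:C} with initial data \eqref{new:initial:conditions:model:C}.

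The key observation is a symmetry. Let $P(x_1,x_2,x_3)=(x_1,x_3,x_2)$, and likewise on $y$. By inspection of the formulas for $A,B,C$ in the proof of Proposition \ref{proposition:local:solutions:model:C}:
\begin{itemize}
\item $A_1$ is symmetric in $x_2,x_3$ and $A_2\leftrightarrow A_3$ under the swap;
\item $B_1$ is symmetric, and $B_2=-y_2$, $B_3=-y_3$ are exchanged;
\item $C_1$ is symmetric and $C_2\leftrightarrow C_3$.
\end{itemize}
Hence $A(Px)=PA(x)$, $B(Px,Py)=PB(x,y)$ and $C(t,Px,Py)=PC(t,x,y)$. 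The initial data \eqref{new:initial:conditions:model:C} is $P$-fixed, since $x_2(0)=x_3(0)=\zeta_0^2$.

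It follows that $t\mapsto(Px(t),Py(t))$ is again a power-series solution with the same initial data. By the uniqueness in Proposition \ref{proposition:local:solutions:model:C} $a)$, it coincides with $(x(t),y(t))$. Uniqueness holds because $\det\mathcal{L}_m=(m+1)(m+3)^2>0$ for all $m\ge0$, so every coefficient $x^{m+2}=\mathcal{L}_m^{-1}D_m$ is determined.

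This gives $x_2=x_3$, i.e.\ $f_2=f_3$, on $[0,\epsilon_0)$. If one prefers not to phrase this via uniqueness of solutions, one can run the coefficient induction from Proposition \ref{nonexistence:model:B} verbatim: $P$ commutes with $\mathcal{L}_m$, and one shows $P(D_m)=D_m$ inductively. Here even $m=0$ is covered, since $\mathcal{L}_0$ is invertible.

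Since $f_2^2-f_3^2$ is real-analytic on $(0,1)$ and vanishes near $0$, it vanishes identically on $(0,1)$. By continuity $f_2(1)=f_3(1)$. But smoothness at $Q_1$ forces $f_2(1)=0$ and $f_3(1)>0$, a contradiction.

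The main obstacle is verifying the equivariance identities for $A,B,C$ carefully, in particular that the $t^2$-terms in $C_2,C_3$ and the mixed terms such as $-(n-1)y_2y_3/x_3$ in $C_2$ map correctly to $-(n-1)y_2y_3/x_2$ in $C_3$ under the swap. One must also justify that the analytic metric's expansion is exactly the series governed by the $\mathcal{L}_m$ recursion. Both are routine given the earlier computations.
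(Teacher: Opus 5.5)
Your argument is correct and is essentially the paper's proof. Both use two facts: the local solution at $Q_0$ is unique, since $\det\mathcal{L}_m=(m+1)(m+3)^2>0$ for all $m\ge 0$, and it lies in the invariant set $\{f_2=f_3\}$, which is incompatible with $f_2(1)=0<f_3(1)$ at $Q_1$. The differences are cosmetic: the paper exhibits the solution inside the plane $\{x_2=x_3,\,y_2=y_3\}$ by solving a reduced two-dimensional singular initial value problem and then invokes uniqueness, whereas you get the same conclusion from the swap symmetry $P$ plus uniqueness, and you also spell out the analytic continuation from a neighborhood of $0$ to $(0,1)$, which the paper leaves implicit.
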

\begin{proof}
Assume that there exists a smooth, globally defined \(\mathrm{SO}(n+1)\)-invariant Einstein metric
\[g=dt^2+\sum\limits_{i=1}^3f_i(t)^2(\cdot,\cdot)\big{|}_{\mathfrak{m}_i\times\mathfrak{m}_i}\]
on \(\mathbb{C}P^n\) for which $Q_0$ is totally geodesic. Then \(g\) necessarily satisfies the boundary conditions \eqref{initial:values:0:model:C} for some \(\zeta_0>0\). Let $A(x)$, $B(x,y)$, and $C(t,x,y)$ be as in the proof of Proposition \ref{proposition:local:solutions:model:C}. Note that, restricted to the plane $\{x_2=x_3,y_2=y_3\}$ we have
\begin{align*}
    &A(x)=(0,0,0),\\
    &B_{1}(x,y)=-2y_1-2(n-1)\frac{x_1y_2}{x_2}=:\tilde{B}_1(x_1,x_2,y_1,y_2)\\
    &B_2(x,y)=B_3(x,y)=-y_2=:\tilde{B}_2(x_1,x_2,y_1,y_2),\\
    &C_1(t,x,y)=\frac{y_1^2}{x_1}-2(n-1)\frac{y_1y_2}{x_2}-\lambda x_1+\frac{t^2x_1^2}{x_2^2}=:\tilde{C}_1(t,x_1,x_2,y_1,y_2),\\
    &C_2(t,x,y)=C_3(t,x,y)=-(2n-4)\frac{y_2^2}{x_2}+\frac{n-1}{4}-\frac{y_1y_2}{x_1}-\lambda x_2-\frac{t^2x_1}{8x_2}=:\tilde{C}_2(t,x_1,x_2,y_1,y_2).
\end{align*}
Consider the two-dimensional initial value problem
\begin{align}\label{two-dimensional:system:model:C}
    \begin{split}
    &(x_1,x_2)'=2(y_1,y_2),\\
    &(y_1,y_2)'=\frac{1}{t}\,\tilde{B}(x_1,x_2,y_1,y_2)+\tilde{C}(t,x_1,x_2,y_1,y_2),\\
    &(x_1(0),x_2(0),y_1(0),y_2(0))=(1,\zeta_0^2,0,0),
    \end{split}
\end{align}
where $\tilde{B}=(\tilde{B}_1,\tilde{B}_2)$ and $\tilde{C}=(\tilde{C}_1,\tilde{C}_2)$. Since $\tilde{B}(1,\zeta_0^2,0,0)=(0,0)$ and
\[
\det\left((m+1)\mathrm{Id}-(\partial_y\tilde{B})_{(1,\zeta_0^2,0,0)}\right)=(m+3)(m+2)>0,\ \forall\,m\ge 0,
\]
it follows from \cite[Section~5]{EW} (see also \cite[Theorem~2.2]{B}) that \eqref{two-dimensional:system:model:C} admits a unique solution, say $(\tilde{x}_1,\tilde{x}_2,\tilde{y}_1,\tilde{y}_2)$. Consequently, $(\tilde{x}_1,\tilde{x}_2,\tilde{x}_2,\tilde{y}_1,\tilde{y}_2,\tilde{y}_2)$ is the unique solution of \eqref{EW:form:model:C}-\eqref{new:initial:conditions:model:C}. Under the change of variables \eqref{change:variables:0:model:C}, the plane $\{x_2=x_3,y_2=y_3\}$ corresponds to $\{f_2=f_3\}.$ Hence, the solution of \eqref{Einstein:CP^n:f_1}-\eqref{Einstein:CP^n:f_3} with initial data \eqref{initial:values:0:model:C} satisfies $f_2=f_3$ on its interval of definition. In particular, it cannot extend smoothly to \(Q_1\), since the smoothness conditions at \(t=1\) require
\[
(f_1(1),f_2(1),f_3(1))=(\zeta_1,0,\zeta_1),
\ \zeta_1>0,
\]
which contradicts \(f_2=f_3\).
\end{proof}

\subsection{Model D}\label{Einstein:metrics:D}
Let $\mathfrak{m}_1,\dots,\mathfrak{m}_5$ be the isotropy summands in
\eqref{isotropy:summands:model:D}, and let $(\cdot,\cdot)$ be the $\Ad(\mathrm{U}(5))$-invariant inner product
in \eqref{inner:product:model:D}. An $(\cdot,\cdot)$-orthonormal basis adapted to
$\mathfrak{m}_1\oplus\cdots\oplus\mathfrak{m}_5$ can be chosen as follows:
\begin{align*}
&\begin{aligned}
\mathcal{B}_1=\left\{\frac{\sqrt{-1}}{2}\big(E_{11}+E_{22}-E_{33}-E_{44}\big)\right\},
\end{aligned}\\[0.2em]
&\begin{aligned}
\mathcal{B}_2=&\left\{\frac{E_{13}+E_{24}-E_{31}-E_{42}}{2},\frac{\sqrt{-1}(E_{13}-E_{24}+E_{31}-E_{42})}{2},\right.\\[0.2em]
&\left.\frac{E_{14}-E_{23}+E_{32}-E_{41}}{2},\frac{\sqrt{-1}(E_{14}+E_{23}+E_{32}+E_{41})}{2}\right\},
\end{aligned}\\[0.2em]
&\begin{aligned}
\mathcal{B}_3=&\left\{\frac{E_{13}-E_{24}-E_{31}+E_{42}}{2},\frac{\sqrt{-1}(E_{13}+E_{24}+E_{31}+E_{42})}{2},\right.\\[0.2em]
&\left.\frac{E_{14}+E_{23}-E_{32}-E_{41}}{2},\frac{\sqrt{-1}(E_{14}-E_{23}-E_{32}+E_{41})}{2}\right\},
\end{aligned}\\[0.2em]
&\mathcal{B}_4=\left\{\frac{1}{\sqrt2}(E_{5i}-E_{i5}),\ \frac{\sqrt{-1}}{\sqrt2}(E_{5i}+E_{i5}):i\in\{1,2\}\right\},\\[0.2em]
&\mathcal{B}_5=\left\{\frac{1}{\sqrt2}(E_{5i}-E_{i5}),\ \frac{\sqrt{-1}}{\sqrt2}(E_{5i}+E_{i5}):i\in\{3,4\}\right\}.
\end{align*}
With respect to this basis, the only nonzero structure constants $[ijk]$ (up to permutations of $(i,j,k)$) are
\begin{equation*}\label{constants:ijk:model:D}
[123]=[245]=[345]=4,\ [144]=[155]=1.
\end{equation*}
Moreover, the Killing form $B$ of $\mathfrak{u}(5)$ is
\[
B(X,Y)=10\mathrm{Tr}(XY)-2\mathrm{Tr}(X)\mathrm{Tr}(Y),\ X,Y\in\mathfrak{u}(5).
\]
Therefore,
\[
-B\big|_{\mathfrak{m}_i\times\mathfrak{m}_i}=-10\mathrm{Tr}(XY)
=10(\cdot,\cdot)\big|_{\mathfrak{m}_i\times\mathfrak{m}_i},
\ i=1,2,3,4,5,
\]
and hence $b_i=10$ for $i=1,2,3,4,5$.\\

Consider a $\mathrm{U}(5)$-invariant metric on $\mathbb{C}P^9\setminus\{Q_0,Q_1\}$ of the form
\begin{equation}\label{metric:g:model:D}
g = dt^2+\sum_{i=1}^5 f_i(t)^2\,(\cdot,\cdot)\big|_{\mathfrak{m}_i\times\mathfrak{m}_i}.
\end{equation}
Then the equations \eqref{Einstein2}-\eqref{Einstein3} take the form
\begin{align}
    &\begin{aligned}
        &\frac{f_1''}{f_1}-\frac{4}{f_1^2}-2\left(\frac{f_1^2}{f_2^2f_3^2}-\frac{f_2^2}{f_1^2f_3^2}-\frac{f_3^2}{f_1^2f_2^2}+\frac{1}{8}\frac{f_1^2}{f_4^4}+\frac{1}{8}\frac{f_1^2}{f_5^4}\right)\\[0.5em]
        &+4\left(\frac{f_1'f_2'}{f_1f_2}+\frac{f_1'f_3'}{f_1f_3}+\frac{f_1'f_4'}{f_1f_4}+\frac{f_1'f_5'}{f_1f_5}\right)=-\lambda,
    \end{aligned}\label{Einstein:f1:model:D}\\[0.5em]
    &\begin{aligned}
        &\frac{f_2''}{f_2}-\frac{5}{f_2^2}-\frac{1}{2}\left(\frac{f_2^2}{f_1^2f_3^2}-\frac{f_1^2}{f_2^2f_3^2}-\frac{f_3^2}{f_1^2f_2^2}+\frac{f_2^2}{f_4^2f_5^2}-\frac{f_4^2}{f_2^2f_5^2}-\frac{f_5^2}{f_2^2f_4^2}\right)\\[0.5em]
        &+3\frac{f_2'^2}{f_2^2}+\frac{f_1'f_2'}{f_1f_2}+4\left(\frac{f_2'f_3'}{f_2f_3}+\frac{f_2'f_4'}{f_2f_4}+\frac{f_2'f_5'}{f_2f_5}\right)=-\lambda, \label{Einstein:f2:model:D}
    \end{aligned}\\[0.5em]
    &\begin{aligned}
        &\frac{f_3''}{f_3}-\frac{5}{f_3^2}-\frac{1}{2}\left(\frac{f_3^2}{f_1^2f_2^2}-\frac{f_1^2}{f_2^2f_3^2}-\frac{f_2^2}{f_1^2f_3^2}+\frac{f_3^2}{f_4^2f_5^2}-\frac{f_4^2}{f_3^2f_5^2}-\frac{f_5^2}{f_3^2f_4^2}\right)\\[0.5em]
        &+3\frac{f_3'^2}{f_3^2}+\frac{f_1'f_3'}{f_1f_3}+4\left(\frac{f_2'f_3'}{f_2f_3}+\frac{f_3'f_4'}{f_3f_4}+\frac{f_3'f_5'}{f_3f_5}\right)=-\lambda, \label{Einstein:f3:model:D}
    \end{aligned}\\[0.5em]
    &\begin{aligned}
        &\frac{f_4''}{f_4}-\frac{5}{f_4^2}-\frac{1}{2}\left(\frac{f_4^2}{f_2^2f_5^2}-\frac{f_2^2}{f_4^2f_5^2}-\frac{f_5^2}{f_2^2f_4^2}+\frac{f_4^2}{f_3^2f_5^2}-\frac{f_3^2}{f_4^2f_5^2}-\frac{f_5^2}{f_3^2f_4^2}-\frac{1}{4}\frac{f_1^2}{f_4^4}\right)\\[0.5em]
        &+3\frac{f_4'^2}{f_4^2}+\frac{f_1'f_4'}{f_1f_4}+4\left(\frac{f_2'f_4'}{f_2f_4}+\frac{f_3'f_4'}{f_3f_4}+\frac{f_4'f_5'}{f_4f_5}\right)=-\lambda, \label{Einstein:f4:model:D}
    \end{aligned}\\[0.5em]
    &\begin{aligned}
        &\frac{f_5''}{f_5}-\frac{5}{f_5^2}-\frac{1}{2}\left(\frac{f_5^2}{f_2^2f_4^2}-\frac{f_2^2}{f_4^2f_5^2}-\frac{f_4^2}{f_2^2f_5^2}+\frac{f_5^2}{f_3^2f_4^2}-\frac{f_3^2}{f_4^2f_5^2}-\frac{f_4^2}{f_3^2f_5^2}-\frac{1}{4}\frac{f_1^2}{f_5^4}\right)\\[0.5em]
        &+3\frac{f_5'^2}{f_5^2}+\frac{f_1'f_5'}{f_1f_5}+4\left(\frac{f_2'f_5'}{f_2f_5}+\frac{f_3'f_5'}{f_3f_5}+\frac{f_4'f_5'}{f_4f_5}\right)=-\lambda.\label{Einstein:f5:model:D}
    \end{aligned}
\end{align}

By \eqref{derivatives:psi_0:model:D}, the inner product
\[
4(\cdot,\cdot)\big|_{\mathfrak{m}_1\times\mathfrak{m}_1}+\frac{1}{2}(\cdot,\cdot)\big|_{\mathfrak{m}_5\times\mathfrak{m}_5}
\]
coincides with the standard metric of curvature one on $\mathfrak{m}_1\oplus\mathfrak{m}_5\cong T_{\psi_0(H)}\mathbb{S}^5$.
Therefore, in view of \eqref{general:initial:conditions}, the metric \eqref{metric:g:model:D} extends smoothly to $Q_0$
only if
\begin{align}\label{initial:conditions:model:D}
    \begin{split}
    &(f_1(0),f_2(0),f_3(0),f_4(0),f_5(0))=(0,\zeta_0,\zeta_0,\zeta_0,0),\ f_1''(0)=f_5''(0)=0,\\[0.2em]
    &(f_1'(0),f_2'(0),f_3'(0),f_4'(0),f_5'(0))=\left(2,0,0,0,\frac{1}{\sqrt{2}}\right),
    \end{split}
\end{align}
where $\zeta_0>0$ is arbitrary.\\

On the other hand, \eqref{derivative:psi_1:model:D} implies that $(\cdot,\cdot)$ restricts to the standard metric of curvature one on
$\mathfrak{m}_2\cong T_{\psi_1(H)}\mathbb{S}^4$. Consequently, the corresponding boundary conditions at $t=1$ are
\begin{align}\label{final:conditions:model:D}
    \begin{split}
    &(f_1(1),f_2(1),f_3(1),f_4(1),f_5(1))=(\zeta_1,0,\zeta_1,\xi_1,\xi_1),\ f_2''(1)=0,\\[0.2em]
    &(f_1'(1),f_2'(1),f_3'(1),f_4'(1),f_5'(1))=\left(0,-1,0,0,0\right),
    \end{split}
\end{align}
where $\zeta_1,\xi_1>0$ are arbitrary.

\begin{proposition}\label{local:solutions:model:D}
    The following statements hold:
    \begin{itemize}
        \item[$a)$] The system \eqref{Einstein:f1:model:D}-\eqref{Einstein:f5:model:D}, subject to the initial conditions \eqref{initial:conditions:model:D}, admits infinitely many solutions defined on intervals of the form $[0,\epsilon_0)$ for some $\epsilon_0>0.$
        \item[$b)$] The system \eqref{Einstein:f1:model:D}-\eqref{Einstein:f5:model:D}, subject to the final conditions \eqref{final:conditions:model:D}, admits infinitely many solutions defined on intervals of the form $(1-\epsilon_1,1]$ for some $\epsilon_1>0.$
    \end{itemize}
\end{proposition}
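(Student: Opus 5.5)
We follow the scheme already used for Propositions \ref{proposition:local:solutions:model:A} and \ref{local:solutions:model:B}, and treat part $b)$ in detail; part $a)$ is analogous.

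For $b)$ we set $g_i(t)=f_i(1-t)$. The system \eqref{Einstein:f1:model:D}-\eqref{Einstein:f5:model:D} is invariant under $t\mapsto 1-t$, and the final conditions \eqref{final:conditions:model:D} become
\[
g(0)=(\zeta_1,0,\zeta_1,\xi_1,\xi_1),\quad g'(0)=(0,1,0,0,0),\quad g_2''(0)=0.
\]
We introduce the change of variables
\[
x_2=\frac{g_2^2}{t^2},\quad y_2=\frac{g_2g_2'}{t^2}-\frac{g_2^2}{t^3},\quad x_i=g_i^2,\quad y_i=g_ig_i',\quad i=1,3,4,5.
\]
We then rewrite the system in the Eschenburg--Wang form
\[
x'=2y,\qquad y'=\frac{1}{t^2}A(x)+\frac{1}{t}B(x,y)+C(t,x,y).
\]
Here $A$ collects the terms carrying an explicit $1/x_2$ factor, coming from the $[123]$ and $[245]$ brackets and from $-5/f_2^2$. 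The operator $B$ contains $-2y_i$ for $i\neq 2$, and for $i=2$ it contains $-4y_2$ together with the coupling terms $-x_2\sum d_k y_k/x_k$, where $d_1=1$, $d_3=d_4=d_5=4$. Finally, $C$ is analytic near $t=0$.

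The initial point is $x(0)=(\zeta_1^2,1,\zeta_1^2,\xi_1^2,\xi_1^2)$, $y(0)=0$. The value $x_2(0)=1$ reflects the fact that $(\cdot,\cdot)$ is already the round metric on $\mathfrak{m}_2$. We then verify the two compatibility conditions
\[
A(x(0))=0,\qquad 2(dA)_{x(0)}(y(0))+B(x(0),y(0))=0.
\]
The first condition holds because the bracket terms $\frac{x_1}{x_3}+\frac{x_3}{x_1}=2$ and $\frac{x_4}{x_5}+\frac{x_5}{x_4}=2$ combine with the constants to cancel. This is the analogue of the identity for $A_2$ in Model \textbf{B}.

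By \cite[Section 5]{EW}, the compatibility conditions yield a power-series solution with $y^m=\tfrac12x^{m+1}$. Its coefficients are determined recursively by $\mathcal{L}_m(x^{m+2})=D_m$, where $\mathcal{L}_m$ and $D_m$ are defined as in \eqref{Lm:definition} and \eqref{Dm:definition}. The key computation is the determinant of the $5\times5$ matrix $\mathcal{L}_m$ at $x(0)$. We expect it to factor, as in Model \textbf{B}, into a product of terms of the form $(m+a)$ with $a\ge 0$ over a power of $(m+2)$, with exactly one factor equal to $m$. Consequently $\mathcal{L}_m$ is invertible for $m\ge1$ and singular for $m=0$.

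It then remains to solve $\mathcal{L}_0(x^2)=D_0$. Here $D_0=2C^0+\dots$, which we compute explicitly in terms of $\lambda$, $\zeta_1$ and $\xi_1$. We must check that $D_0$ lies in the image of $\mathcal{L}_0$. We expect $\ker\mathcal{L}_0$ to be spanned by a vector of the form $(a,0,b,0,0)$, mixing $x_1$ and $x_3$ as in Model \textbf{B}, where it was $(-2,0,1,0,0)$. The reason is that the linearization of the $A_1$ and $A_3$ terms in $x_1$ and $x_3$ produces a rank-deficient block. Solvability then follows because $D_0$ has equal first and third components, or more generally satisfies the corresponding linear relation. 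This gives a one-parameter family $x^2=x^2_{\mathrm{part}}+s\,v$ with $s\in\mathbb{R}$. Each value of $s$ determines all higher coefficients uniquely, so each $s$ gives a distinct local solution on $(1-\epsilon_1,1]$.

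For $a)$ we use the variables
\[
x_i=\frac{f_i^2}{t^2},\quad y_i=\frac{f_if_i'}{t^2}-\frac{f_i^2}{t^3},\quad i=1,5,\qquad x_i=f_i^2,\quad y_i=f_if_i',\quad i=2,3,4,
\]
after rescaling $f_1\mapsto f_1/2$ and $f_5\mapsto \sqrt2 f_5$ so that the initial slopes become $1$. The initial point is then $(1,\zeta_0^2,\zeta_0^2,\zeta_0^2,1)$ with $y(0)=0$. We repeat the same steps. The vanishing $A(x(0))=0$ comes from the round $\mathbb{S}^5$ structure on $\mathfrak{m}_1\oplus\mathfrak{m}_5$, exactly as for $A_1$ and $A_3$ in Model \textbf{A}. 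We again expect the determinant of $\mathcal{L}_m$ to have a single simple zero at $m=0$, giving a free parameter $s$; the kernel should be along a direction like $(-c,0,0,0,1)$, analogous to $(-2q,0,1)$ in Model \textbf{A}.

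The main obstacle is the explicit computation of $\det\mathcal{L}_m$ and of $\ker\mathcal{L}_0$, together with the verification that $D_0\in\operatorname{im}\mathcal{L}_0$. If the image condition failed, the expansion would require $t^2\log t$ terms. We would handle this step by writing $\mathcal{L}_0$ in block form: the coupling in $\partial_yB$ appears only in one row, namely the row of the collapsing variable. The problem therefore reduces to a $2\times2$ block for the two variables sharing the degenerate direction, together with a triangular remainder.
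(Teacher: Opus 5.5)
Your outline follows the paper's route: reparametrize, pass to the Eschenburg--Wang variables, show $\mathcal{L}_m$ is invertible for $m\ge1$, and take the free parameter from $\ker\mathcal{L}_0$. The gap is that the step carrying all the content is left as an expectation, and the data you do write down would give the opposite conclusion. In Model \textbf{D} the collapsing summand has $d_2=\dim\mathfrak{m}_2=4$. The cross term $d_2\,g_i'g_2'/(g_ig_2)$, with $g_2'/g_2=1/t+y_2/x_2$, therefore gives $B_i=-4y_i$ for $i\neq2$ and $B_2=-8y_2-\frac{x_2y_1}{x_1}-4\sum_{k=3}^{5}\frac{x_2y_k}{x_k}$. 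Your values $-2y_i$ and $-4y_2$ are those of Model \textbf{B}, where $d_2=2$. This matters. Since $(dA)_{x(0)}$ has rows $(4,0,-4,0,0)$ and $(-1,0,1,0,0)$ for $A_1,A_3$, your $B$ makes the $(x_1,x_3)$-block of $\mathcal{L}_0$ equal to $\begin{pmatrix}-1&4\\1&2\end{pmatrix}$, which is invertible. In fact $\mathcal{L}_m$ would then be invertible for every integer $m\ge0$, and you would get a \emph{unique} local solution. With the correct $B$ the block is $\begin{pmatrix}1&4\\1&4\end{pmatrix}$. This gives $\det\mathcal{L}_m=\frac{m(m+1)(m+3)(m+5)^2(m+6)(m+7)(m+8)}{(m+2)^3}$, $\ker\mathcal{L}_0=\mathrm{span}\{(-4,0,1,0,0)\}$ and $\operatorname{im}\mathcal{L}_0=\{D_1=D_3\}$. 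Solvability then rests on $(D_0)_1=(D_0)_3=-2\lambda\zeta_1^2+8+\zeta_1^4/\xi_1^4$. This is a genuine coincidence between the constant terms of $C_1$ and $C_3$ and must be computed, not assumed.

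For part $a)$ your expectation is false. Work in the unscaled variables $x_1=f_1^2/t^2$, $x_5=f_5^2/t^2$, $x_i=f_i^2$ ($i=2,3,4$), with $x(0)=(4,\zeta_0^2,\zeta_0^2,\zeta_0^2,\tfrac12)$. Then $B_1=-6y_1-\cdots$, $B_5=-9y_5-\cdots$, $B_i=-5y_i$ for $i=2,3,4$, and
\[
\det\mathcal{L}_m=\frac{m^2(m+1)(m+3)(m+6)(m+7)(m+8)^2(m+10)}{(m+2)^4},\qquad \ker\mathcal{L}_0=\mathrm{span}\{(-32,0,0,0,1),(0,1,1,-2,0)\}.
\]
Besides the $\mathfrak{p}_0$-direction you predicted, there is a second kernel direction inside $\mathfrak{n}_0=\mathfrak{m}_2\oplus\mathfrak{m}_3\oplus\mathfrak{m}_4$, just as in Model \textbf{E}; rescaling $f_1,f_5$ does not change this. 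So $\operatorname{im}\mathcal{L}_0$ has codimension two and two compatibility conditions must be checked. Your reduction to one $2\times2$ block plus a triangular remainder also fails here, because the $x_2,x_3,x_4$ rows form a genuine $3\times3$ block. The two conditions do hold for $D_0=2C^0=(-8\lambda,\,10-2\lambda\zeta_0^2,\,10-2\lambda\zeta_0^2,\,10-2\lambda\zeta_0^2,\,-\lambda)$. Row $1$ of $\mathcal{L}_0$ is $8$ times row $5$, matching $D_1=8D_5$. The $x_2,x_3,x_4$ block is symmetric with kernel $(1,1,-2)$, which is orthogonal to $(1,1,1)$. Hence $a)$ is true, with a two-parameter family of second-order data, but this has to be computed rather than inferred by analogy with Models \textbf{A} and \textbf{B}.
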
\begin{proof}
The proof follows the same steps as the proof of Proposition \ref{local:solutions:model:B}. We only write the argument for \(b)\), since the proof of \(a)\) is similar. Set
\[
g_i(t):=f_i(1-t),\ i=1,\dots,5.
\]
Then \eqref{final:conditions:model:D} becomes
\begin{equation}\label{reparametrized:final:conditions:model:D}
\begin{split}
&(g_1(0),g_2(0),g_3(0),g_4(0),g_5(0))=(\zeta_1,0,\zeta_1,\xi_1,\xi_1),\\
&(g_1'(0),g_2'(0),g_3'(0),g_4'(0),g_5'(0))=(0,1,0,0,0),\ g_2''(0)=0.
\end{split}
\end{equation}
Define
\[
x_1=g_1^2,\ x_2=\frac{g_2^2}{t^2},\ x_3=g_3^2,\ x_4=g_4^2,\ x_5=g_5^2,
\]
\[
y_1=g_1g_1',\ 
y_2=\frac{g_2g_2'}{t^2}-\frac{g_2^2}{t^3},\ 
y_3=g_3g_3',\ 
y_4=g_4g_4',\ 
y_5=g_5g_5'.
\]
With this change of variables, the system \eqref{Einstein:f1:model:D}-\eqref{Einstein:f5:model:D} takes the form
\[
x'=2y,\ 
y'=\frac{1}{t^2}A(x)+\frac{1}{t}B(x,y)+C(t,x,y),
\]
where
\begin{align*}
&A(x)=(A_1(x),A_2(x),A_3(x),A_4(x),A_5(x)),\\
&B(x,y)=(B_1(x,y),B_2(x,y),B_3(x,y),B_4(x,y),B_5(x,y)),\\
&C(t,x,y)=(C_1(t,x,y),C_2(t,x,y),C_3(t,x,y),C_4(t,x,y),C_5(t,x,y)),
\end{align*}
with
\begin{align*}
&\begin{aligned}
&A_1(x)=2\left(\frac{x_1^2}{x_2x_3}-\frac{x_3}{x_2}\right),\ A_2(x)=5-3x_2-\frac{1}{2}\left(\frac{x_1}{x_3}+\frac{x_3}{x_1}+\frac{x_4}{x_5}+\frac{x_5}{x_4}\right),\\[0.4em]
&A_3(x)=\frac{1}{2}\left(\frac{x_3^2}{x_1x_2}-\frac{x_1}{x_2}\right),\ A_4(x)=\frac{1}{2}\left(\frac{x_4^2}{x_2x_5}-\frac{x_5}{x_2}\right),\ A_5(x)=\frac{1}{2}\left(\frac{x_5^2}{x_2x_4}-\frac{x_4}{x_2}\right),
\end{aligned}\\[0.4em]
&\begin{aligned}
&B_1(x,y)=-4y_1,\ B_2(x,y)=-8y_2-\frac{x_2y_1}{x_1}-4\left(\frac{x_2y_3}{x_3}+\frac{x_2y_4}{x_4}+\frac{x_2y_5}{x_5}\right),\\[0.4em]
&B_3(x,y)=-4y_3,\ B_4(x,y)=-4y_4,\ B_5(x,y)=-4y_5,
\end{aligned}\\[0.4em]
&\begin{aligned}
C_1(t,x,y)=&\frac{y_1^2}{x_1}+4+\frac{1}{4}\left(\frac{x_1^2}{x_4^2}+\frac{x_1^2}{x_5^2}\right)-4\left(\frac{y_1y_2}{x_2}+\frac{y_1y_3}{x_3}+\frac{y_1y_4}{x_4}+\frac{y_1y_5}{x_5}\right)\\[0.4em]
&-\lambda x_1-2\frac{x_2}{x_3}t^2,\\[0.4em]
C_2(t,x,y)=&-2\frac{y_2^2}{x_2}-\frac{y_1y_2}{x_1}-4\left(\frac{y_2y_3}{x_3}+\frac{y_2y_4}{x_4}+\frac{y_2y_5}{x_5}\right)-\lambda x_2+\frac{1}{2}\left(\frac{x_2^2}{x_1x_3}+\frac{x_2^2}{x_4x_5}\right),\\[0.4em]
C_3(t,x,y)=&-2\frac{y_3^2}{x_3}+5+\frac{1}{2}\left(\frac{x_3^2}{x_4x_5}
-\frac{x_4}{x_5}
-\frac{x_5}{x_4}\right)-\frac{y_1y_3}{x_1}\\[0.4em]
&-4\left(\frac{y_2y_3}{x_2}+\frac{y_3y_4}{x_4}
+\frac{y_3y_5}{x_5}\right)-\lambda x_3-\frac{1}{2}\frac{x_2}{x_1}t^2,\\[0.4em]
C_4(t,x,y)=&-2\frac{y_4^2}{x_4}+5+\frac{1}{2}\left(\frac{x_4^2}{x_3x_5}-\frac{x_3}{x_5}-\frac{x_5}{x_3}-\frac{1}{4}\frac{x_1}{x_4}\right)-\frac{y_1y_4}{x_1}\\[0.4em]
&-4\left(\frac{y_2y_4}{x_2}
+\frac{y_3y_4}{x_3}
+\frac{y_4y_5}{x_5}\right)-\lambda x_4-\frac{1}{2}\frac{x_2}{x_5}t^2\\[0.4em]
C_5(t,x,y)=&-2\frac{y_5^2}{x_5}+5+\frac{1}{2}\left(\frac{x_5^2}{x_3x_4}-\frac{x_3}{x_4}-\frac{x_4}{x_3}-\frac{1}{4}\frac{x_1}{x_5}\right)-\frac{y_1y_5}{x_1}\\[0.4em]
&-4\left(\frac{y_2y_5}{x_2}
+\frac{y_3y_5}{x_3}
+\frac{y_4y_5}{x_4}\right)-\lambda x_5-\frac{1}{2}\frac{x_2}{x_4}t^2.
\end{aligned}
\end{align*}

The initial conditions are
\[
x(0)=\bigl(\zeta_1^2,1,\zeta_1^2,\xi_1^2,\xi_1^2\bigr),\ 
y(0)=(0,0,0,0,0).
\]
At this point one have that
\[
A(x(0))=0,\ 
2(dA)_{x(0)}(y(0))+B(x(0),y(0))=0.
\]

Now let \(\mathcal L_m\) and \(D_m\) be defined by \eqref{Lm:definition} and \eqref{Dm:definition}. In the present case,
\[
\det(\mathcal L_m)=
\frac{m(m+1)(m+3)(m+5)^2(m+6)(m+7)(m+8)}{(m+2)^3}.
\]
Therefore \(\mathcal L_m\) is invertible for every \(m\geq 1\), while
\[
\ker(\mathcal L_0)=\mathrm{span}\{(-4,0,1,0,0)\}.
\]

From here the same recursive argument used in Proposition \ref{local:solutions:model:B} applies. Once \(x^0\), \(x^1\), and \(x^2\) are fixed, all higher coefficients are determined uniquely. Here
\[
x^0=x(0)=\bigl(\zeta_1^2,1,\zeta_1^2,\xi_1^2,\xi_1^2\bigr),\ 
x^1=2y(0)=(0,0,0,0,0),
\]
and \(x^2\) must satisfy
\[
\mathcal L_0(x^2)=D_0=((D_0)_1,(D_0)_2,(D_0)_3,(D_0)_4,(D_0)_5),
\]
where
\begin{align*}
&(D_0)_1=(D_0)_3=-2\lambda\zeta_1^2+8+\frac{\zeta_1^4}{\xi_1^4},\\[0.2em]
&(D_0)_2=-2\lambda,\\[0.2em]
&(D_0)_4=(D_0)_5=-2\lambda\xi_1^2+10-\frac{5\zeta_1^2}{4\xi_1^2}.
\end{align*}
Solving this equation gives
\[
\begin{aligned}
x^2
=&\left(
(D_0)_1,
\frac{1}{12}\left((D_0)_2-\frac{(D_0)_1}{\zeta_1^2}-\frac{8(D_0)_4}{5\xi_1^2}\right),
0,
\frac{(D_0)_4}{5},
\frac{(D_0)_4}{5}
\right)\\
&+s(-4,0,1,0,0),\ s\in\mathbb R.
\end{aligned}
\]
Hence the system \eqref{Einstein:f1:model:D}-\eqref{Einstein:f5:model:D}, together with the final conditions \eqref{final:conditions:model:D}, admits infinitely many local solutions near \(t=1\). This proves \(b)\).
\end{proof}
In contrast with Model {\bf B}, in Model {\bf D} the Einstein equation does not produce any relation coming from the non-diagonal components of the Ricci tensor, that is, \eqref{Einstein3} is automatically satisfied for every metric of the form \eqref{metric:g:model:D}. Therefore, the local solutions obtained in Proposition \ref{local:solutions:model:D} are Einstein metrics defined on neighborhoods of the singular orbits. With the notation of the proof above, one can verify that if \(P\) is the map defined in \eqref{map:p}, then
\[
A(Px)=PA(x),\ B(Px,Py)=PB(x,y),\ C(t,Px,Py)=PC(t,x,y),
\]
and also
\[
P(x(0))=x(0),\ P(y(0))=y(0).
\]
Moreover,
\[
P(D_0)=D_0,
\]
and the general solution of \(\mathcal L_0(x^2)=D_0\) satisfies
\[
P(x^2)=x^2.
\]
Hence, the same argument as in Proposition \ref{nonexistence:model:B} applies in this case to obtain the following result.

\begin{proposition}\label{nonexistence:model:D}
There is no smooth, globally defined, \(\mathrm{U}(5)\)-invariant Einstein metric
\[
g=dt^2+\sum_{i=1}^5f_i(t)^2(\cdot,\cdot)\big|_{\mathfrak{m}_i\times\mathfrak{m}_i}
\]
on \(\mathbb{C}P^9\) for which $Q_1$ is totally geodesic.%satisfying \eqref{initial:conditions:model:D} and \eqref{final:conditions:model:D}.
\end{proposition}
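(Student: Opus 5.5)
We argue by contradiction, following the proof of Proposition \ref{nonexistence:model:B} almost verbatim. Suppose such a metric $g$ exists. Since $Q_1$ is totally geodesic, $L_1=0$, so the functions $f_i$ satisfy \eqref{Einstein:f1:model:D}-\eqref{Einstein:f5:model:D} together with the final conditions \eqref{final:conditions:model:D} for some $\zeta_1,\xi_1>0$. Independently of any geodesic assumption at $Q_0$, smoothness at $Q_0$ forces $f_5(0)=0$, because $\mathfrak{m}_5\subseteq\mathfrak{p}_0$ collapses there, while $f_4(0)=\zeta_0>0$, because $\mathfrak{m}_4\subseteq\mathfrak{n}_0$. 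Set $g_i(t)=f_i(1-t)$ and introduce the variables $x_i,y_i$ from the proof of Proposition \ref{local:solutions:model:D}. The system then has the Eschenburg--Wang form with initial data $x(0)=(\zeta_1^2,1,\zeta_1^2,\xi_1^2,\xi_1^2)$ and $y(0)=0$. By \cite[Theorem 5.2]{DK}, $g$ is real-analytic, so $x,y$ have convergent power series $\sum x^m t^m/m!$ near $t=0$.

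Let $P$ be the swap $x_4\leftrightarrow x_5$ from \eqref{map:p}. First I verify directly from the explicit formulas that $A(Px)=PA(x)$, $B(Px,Py)=PB(x,y)$ and $C(t,Px,Py)=PC(t,x,y)$. This reflects the symmetry of \eqref{Einstein:f1:model:D}-\eqref{Einstein:f5:model:D} under $f_4\leftrightarrow f_5$. In $A$, only $A_2$ (which is symmetric) and the pair $A_4,A_5$ are involved. In $B$, $B_2$ is symmetric. In $C$, $C_1$ and $C_3$ are symmetric in $x_4,x_5$, and $C_4,C_5$ are interchanged. Since $Px(0)=x(0)$ and $Py(0)=y(0)$, it follows that $P$ commutes with $(dA)_{x(0)}$ and with $(\partial_yB)_{(x(0),y(0))}$, hence with every $\mathcal{L}_m$.

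Next I prove by induction that $P(D_m)=D_m$ and $P(x^{m+2})=x^{m+2}$ for all $m\ge0$. The base case $m=0$ is the delicate point, because $\mathcal{L}_0$ is not invertible. Here I use the explicit computation in the proof of Proposition \ref{local:solutions:model:D}: $(D_0)_4=(D_0)_5$, and every solution of $\mathcal{L}_0x^2=D_0$ has $(x^2)_4=(x^2)_5=(D_0)_4/5$. This holds because $\ker\mathcal{L}_0=\mathrm{span}\{(-4,0,1,0,0)\}$ is itself $P$-fixed. So whatever $x^2$ the actual metric selects, it is $P$-invariant.

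For $m\ge1$ the inductive step is identical to Model B. Assume $P(x^k)=x^k$ for $k\le m+1$. Expanding $A^{m+2}$ and $B^{m+1}$ by the multivariable Fa\`a di Bruno formula and using equivariance of the higher differentials $(d^kA)_{x(0)}$ and $(d^{p,q}B)_{(x(0),y(0))}$ shows that $A^{m+2}-(dA)_{x(0)}x^{m+2}$, $B^{m+1}-\tfrac12(\partial_yB)x^{m+2}$ and $C^m$ are $P$-fixed. Hence $P(D_m)=D_m$. Since $\mathcal{L}_m$ is invertible and commutes with $P$, this gives $P(x^{m+2})=\mathcal{L}_m^{-1}D_m=x^{m+2}$.

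Therefore $x_4=x_5$ near $t=0$. By analyticity on $(0,1)$ and continuity, $x_4=x_5$ on the whole interval, so $g_4(1)=g_5(1)$, that is $f_4(0)=f_5(0)$. This contradicts $f_4(0)=\zeta_0>0=f_5(0)$.

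The main obstacle I expect is the routine but error-prone verification of $P$-equivariance of $C$, especially $C_3$, $C_4$ and $C_5$, together with confirming the claimed form of $\ker\mathcal{L}_0$ and the equality $(D_0)_4=(D_0)_5$. Everything else transfers directly from Proposition \ref{nonexistence:model:B}.
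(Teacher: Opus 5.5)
Your proposal is correct and follows essentially the same route as the paper. The paper likewise checks the $P$-equivariance of $A,B,C$, the $P$-invariance of $x(0),y(0),D_0$ and of every solution of $\mathcal{L}_0x^2=D_0$ (the kernel $\mathrm{span}\{(-4,0,1,0,0)\}$ being $P$-fixed), and then reruns the induction of Proposition \ref{nonexistence:model:B}. This yields $f_4=f_5$ near $Q_1$ and hence on all of $(0,1)$, contradicting $f_4(0)>0=f_5(0)$.
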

\subsection{Model E}\label{Einstein:metrics:E}
Let $\mathfrak{m}_1,\mathfrak{m}_2,\mathfrak{m}_3,\mathfrak{m}_4,\mathfrak{m}_5$ be the spaces described in \eqref{summands:model:E} and let $(\cdot,\cdot)$ be the
$\Ad(\mathrm{Spin}(10))$-invariant inner product on $\mathfrak{spin}(10)\cong\mathfrak{so}(10)$ given by
\eqref{inner:product:model:E}. An $(\cdot,\cdot)$-orthonormal basis adapted to $\mathfrak{m}_1\oplus\mathfrak{m}_2\oplus\mathfrak{m}_3\oplus\mathfrak{m}_4\oplus\mathfrak{m}_5$ is 
\begin{align*}
&\mathcal{B}_1=\left\{\sum_{k=1}^{4}X_{2k,2k-1}\right\},\\[0.2em]
&\mathcal{B}_2=\left\{\sqrt{2}\,(X_{10,2k}+X_{9,2k-1}),\ \sqrt{2}\,(X_{10,2k-1}-X_{9,2k}):k=1,2,3,4\right\},\\[0.2em]
&\mathcal{B}_3=\left\{\sqrt{2}\,(X_{9,2k-1}-X_{10,2k}),\ \sqrt{2}\,(X_{10,2k-1}+X_{9,2k}):k=1,2,3,4\right\},\\[0.2em]
&\mathcal{B}_4=\{Y_1,\dots,Y_6\},\ \mathcal{B}_5=\{Z_1,\dots,Z_6\},
\end{align*}
The nonzero structure constants $[ijk]$ (up to permutations of $(i,j,k)$) are
\begin{equation*}\label{constants:ijk:model:E}
    [122]=[133]=8,\ [145]=24,\ [234]=[235]=48.
\end{equation*}
For $\mathfrak{so}(10)$ one has $B(X,Y)=8\mathrm{Tr}(XY).$ Since $(X,Y)=-\frac{1}{8}\mathrm{Tr}(XY),$ it follows that \(-B=64\,(\cdot,\cdot).\) Therefore,
\[
-B\big|_{\mathfrak{m}_i\times\mathfrak{m}_i}=b_i\,(\cdot,\cdot)\big|_{\mathfrak{m}_i\times\mathfrak{m}_i},\ \textnormal{with}\ b_1=b_2=b_3=b_4=b_5=64.
\]
A $\mathrm{Spin}(10)$-invariant metric on $\mathbb{C}P^{15}\setminus\{Q_0,Q_1\}$ has the form
\begin{equation}\label{metric:g:model:E}
g=dt^2+\sum_{i=1}^{5} f_i(t)^2\,(\cdot,\cdot)\big|_{\mathfrak{m}_i\times\mathfrak{m}_i}.
\end{equation}
The Einstein equations \eqref{Einstein2}-\eqref{Einstein3} become
\begin{align}
&\begin{aligned}&\frac{f_1''}{f_1}-\frac{24}{f_1^2}-2\left(\frac{f_1^2}{f_2^4}+\frac{f_1^2}{f_3^4}+6\frac{f_1^2}{f_4^2f_5^2}-6\frac{f_4^2}{f_1^2f_5^2}-6\frac{f_5^2}{f_1^2f_4^2}\right)\\[0.2em]
&+8\frac{f_1'f_2'}{f_1f_2}+8\frac{f_1'f_3'}{f_1f_3}+6\frac{f_1'f_4'}{f_1f_4}+6\frac{f_1'f_5'}{f_1f_5}=-\lambda,
\end{aligned}\label{Einstein:f1:model:E}\\[0.5em]
&\begin{aligned}
&\frac{f_2''}{f_2}-\frac{32}{f_2^2}-3\left(\frac{f_2^2}{f_3^2f_4^2}-\frac{f_3^2}{f_2^2f_4^2}-\frac{f_4^2}{f_2^2f_3^2}+\frac{f_2^2}{f_3^2f_5^2}-\frac{f_3^2}{f_2^2f_5^2}-\frac{f_5^2}{f_2^2f_3^2}-\frac{1}{6}\frac{f_1^2}{f_2^4}\right)\\[0.2em]
&+7\frac{f_2'^2}{f_2^2}
+\frac{f_1'f_2'}{f_1f_2}
+8\frac{f_2'f_3'}{f_2f_3}
+6\frac{f_2'f_4'}{f_2f_4}
+6\frac{f_2'f_5'}{f_2f_5}
=-\lambda, 
\end{aligned}\label{Einstein:f2:model:E}\\[0.5em]
&\begin{aligned}
&\frac{f_3''}{f_3}-\frac{32}{f_3^2}-3\left(\frac{f_3^2}{f_2^2f_4^2}-\frac{f_2^2}{f_3^2f_4^2}-\frac{f_4^2}{f_3^2f_2^2}+\frac{f_3^2}{f_2^2f_5^2}-\frac{f_2^2}{f_3^2f_5^2}-\frac{f_5^2}{f_3^2f_2^2}-\frac{1}{6}\frac{f_1^2}{f_3^4}\right)\\[0.2em]
&+7\frac{f_3'^2}{f_3^2}
+\frac{f_1'f_3'}{f_1f_3}
+8\frac{f_2'f_3'}{f_2f_3}
+6\frac{f_3'f_4'}{f_3f_4}
+6\frac{f_3'f_5'}{f_3f_5}
=-\lambda,
\end{aligned}\label{Einstein:f3:model:E}\\[0.5em]
&\begin{aligned}
&\frac{f_4''}{f_4}-\frac{32}{f_4^2}-4\left(\frac{f_4^2}{f_2^2f_3^2}-\frac{f_2^2}{f_4^2f_3^2}-\frac{f_3^2}{f_4^2f_2^2}+\frac{1}{2}\frac{f_4^2}{f_1^2f_5^2}-\frac{1}{2}\frac{f_1^2}{f_4^2f_5^2}-\frac{1}{2}\frac{f_5^2}{f_4^2f_1^2}\right)\\[0.2em]
&+5\frac{f_4'^2}{f_4^2}
+\frac{f_1'f_4'}{f_1f_4}
+8\frac{f_2'f_4'}{f_2f_4}
+8\frac{f_3'f_4'}{f_3f_4}
+6\frac{f_4'f_5'}{f_4f_5}
=-\lambda,
\end{aligned}\label{Einstein:f4:model:E}\\[0.5em]
&\begin{aligned}
&\frac{f_5''}{f_5}-\frac{32}{f_5^2}-4\left(
\frac{f_5^2}{f_2^2f_3^2}-\frac{f_2^2}{f_5^2f_3^2}-\frac{f_3^2}{f_5^2f_2^2}
+\frac{1}{2}\frac{f_5^2}{f_1^2f_4^2}-\frac{1}{2}\frac{f_1^2}{f_5^2f_4^2}-\frac{1}{2}\frac{f_4^2}{f_5^2f_1^2}
\right)\\[0.2em]
&+5\frac{f_5'^2}{f_5^2}
+\frac{f_1'f_5'}{f_1f_5}
+8\frac{f_2'f_5'}{f_2f_5}
+8\frac{f_3'f_5'}{f_3f_5}
+6\frac{f_4'f_5'}{f_4f_5}
=-\lambda.
\end{aligned}\label{Einstein:f5:model:E}
\end{align}
By \eqref{derivative:psi0:model:E}, the round metric of curvature one on $K_0/H\approx\mathbb{S}^9$ corresponds, under
$T_{\psi_0(H)}(\mathbb{S}^9)\cong \mathfrak{p}_0=\mathfrak{m}_1\oplus\mathfrak{m}_2$, to
\[
16(\cdot,\cdot)\big|_{\mathfrak{m}_1\times\mathfrak{m}_1}
+2(\cdot,\cdot)\big|_{\mathfrak{m}_2\times\mathfrak{m}_2}.
\]
Thus the smoothness conditions at $t=0$ are
\begin{align}\label{initial:conditions:model:E}
\begin{split}
&(f_1(0),f_2(0),f_3(0),f_4(0),f_5(0))=(0,0,\zeta_0,\zeta_0,\zeta_0),\ \zeta_0>0,\\
&(f_1'(0),f_2'(0),f_3'(0),f_4'(0),f_5'(0))=(4,\sqrt2,0,0,0),\
f_1''(0)=f_2''(0)=0.
\end{split}
\end{align}

Similarly, \eqref{derivative:psi1:model:E} identifies $(\cdot,\cdot)\big|_{\mathfrak{m}_4\times\mathfrak{m}_4}$ with the round metric on
$T_H(K_1/H)\cong T_{\psi_1(H)}\mathbb{S}^6$, hence, the smoothness conditions at $t=1$ are
\begin{align}\label{final:conditions:model:E}
\begin{split}
&(f_1(1),f_2(1),f_3(1),f_4(1),f_5(1))=(\zeta_1,\xi_1,\xi_1,0,\zeta_1),\ \zeta_1,\xi_1>0,\\
&(f_1'(1),f_2'(1),f_3'(1),f_4'(1),f_5'(1))=(0,0,0,-1,0),\
f_4''(1)=0.
\end{split}
\end{align}
\begin{proposition}\label{local:solutions:model:E}
The following statements hold:
\begin{itemize}
    \item[$a)$] The system \eqref{Einstein:f1:model:E}-\eqref{Einstein:f5:model:E}, subject to the initial conditions \eqref{initial:conditions:model:E}, admits infinitely many solutions defined on intervals of the form $[0,\epsilon_0)$ for some $\epsilon_0>0$.
    \item[$b)$] The system \eqref{Einstein:f1:model:E}-\eqref{Einstein:f5:model:E}, subject to the final conditions \eqref{final:conditions:model:E}, admits infinitely many solutions defined on intervals of the form $(1-\epsilon_1,1]$ for some $\epsilon_1>0$.
\end{itemize}
\end{proposition}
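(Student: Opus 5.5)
Following the proofs of Propositions \ref{local:solutions:model:B} and \ref{local:solutions:model:D}, I will prove $b)$ in detail and argue that $a)$ is analogous. Set $g_i(t):=f_i(1-t)$; the system \eqref{Einstein:f1:model:E}-\eqref{Einstein:f5:model:E} is invariant under this reparametrization. The final conditions \eqref{final:conditions:model:E} become $g(0)=(\zeta_1,\xi_1,\xi_1,0,\zeta_1)$, $g'(0)=(0,0,0,1,0)$ and $g_4''(0)=0$. Since $\mathfrak{p}_1=\mathfrak{m}_4$, I introduce the variables
\[
x_4=\frac{g_4^2}{t^2},\ y_4=\frac{g_4g_4'}{t^2}-\frac{g_4^2}{t^3},\ x_i=g_i^2,\ y_i=g_ig_i',\ i=1,2,3,5.
\]
A direct computation then puts the system in the Eschenburg--Wang form $x'=2y$, $y'=t^{-2}A(x)+t^{-1}B(x,y)+C(t,x,y)$. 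The initial data are $x(0)=(\zeta_1^2,\xi_1^2,\xi_1^2,1,\zeta_1^2)$ and $y(0)=0$. Only $A_4$, $B_4$ and the $x_4$-dependent terms in the other components carry the singular factors. For example, $-32/f_4^2$ and the bracket in \eqref{Einstein:f4:model:E} produce $A_4$. The terms $f_4^2/(f_i^2f_j^2)$ in the other equations become $t^2$-terms in $C$. The terms $f_i^2/(f_4^2\cdot)$ in the other equations are $O(t^{-2})$ and go into $A_i$.

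Next I check the two compatibility conditions $A(x(0))=0$ and $2(dA)_{x(0)}(y(0))+B(x(0),y(0))=0$. The second is immediate because $y(0)=0$ and $B$ is linear in $y$. For the first, the key point is that $x_1(0)=x_5(0)$ and $x_2(0)=x_3(0)$. With these equalities the singular combinations cancel: for instance $\tfrac{f_1^2}{f_4^2f_5^2}+\tfrac{f_5^2}{f_4^2f_1^2}$ combines with $32/f_4^2$, and $x_4(0)=1$ gives the normalization $32-2\cdot(\ldots)-\ldots=0$. I will verify this carefully, since it is exactly where the normalization of $(\cdot,\cdot)\big|_{\mathfrak{m}_4}$ as the round metric enters. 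Since $A,B,C$ are real-analytic near the initial point, \cite[Section 5]{EW} yields power-series solutions with $y^m=\tfrac12 x^{m+1}$. The coefficients satisfy $\mathcal{L}_m(x^{m+2})=D_m$, with $\mathcal{L}_m$ and $D_m$ as in \eqref{Lm:definition} and \eqref{Dm:definition}.

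The remaining work is linear algebra on the $5\times5$ matrix $\mathcal{L}_m$. I will compute $\det(\mathcal{L}_m)$ as a rational function of $m$. I expect a factor $m$, so that $\mathcal{L}_m$ is invertible for all $m\ge1$ and $\mathcal{L}_0$ has a one-dimensional kernel. Because $A_1$ and $A_5$ depend on $x_1/x_5$ (through $f_1^2/(f_4^2f_5^2)$-type terms), I expect this kernel to be spanned by a vector of the form $(1,0,0,0,-1)$ or a multiple of something similar.

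The main obstacle is the consistency of $\mathcal{L}_0(x^2)=D_0$, that is, showing $D_0\in\operatorname{im}\mathcal{L}_0$. I will first compute $D_0=2C^0+(A^2-(dA)_{x(0)}x^2)+2(B^1-\tfrac12(\partial_yB)x^2)$ explicitly, which reduces to $2C(0,x(0),0)$. I will then check that it is orthogonal to the cokernel of $\mathcal{L}_0$. The symmetry $x_1(0)=x_5(0)$ should make the relevant components of $D_0$ balance, as happened for $(D_0)_1=(D_0)_3$ in Models B and D. Once this holds, $x^2$ ranges over a particular solution plus $s$ times the kernel vector, for $s\in\mathbb{R}$. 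Each $s$ gives a distinct convergent power-series solution on $(1-\epsilon_1,1]$, which proves $b)$.

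For $a)$, the same scheme applies with $x_i=f_i^2/t^2$ and $y_i=f_if_i'/t^2-f_i^2/t^3$ for $i=1,2$ (since $\mathfrak{p}_0=\mathfrak{m}_1\oplus\mathfrak{m}_2$), after rescaling $f_1,f_2$ to absorb $f_1'(0)=4$ and $f_2'(0)=\sqrt2$. This gives $x(0)=(16,2,\zeta_0^2,\zeta_0^2,\zeta_0^2)$ or its normalized version. I will again check $A(x(0))=0$, find a one-dimensional $\ker\mathcal{L}_0$, verify solvability of $\mathcal{L}_0(x^2)=D_0$, and conclude that there is a one-parameter family of solutions.
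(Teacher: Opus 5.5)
The step that fails is the one you flagged yourself: the check $A(x(0))=0$ in part $b)$. With your variables, the singular part of \eqref{Einstein:f4:model:E} gives
\[
A_4(x)=32-5x_4-4\Bigl(\frac{x_2}{x_3}+\frac{x_3}{x_2}\Bigr)-2\Bigl(\frac{x_1}{x_5}+\frac{x_5}{x_1}\Bigr).
\]
At $x(0)=(\zeta_1^2,\xi_1^2,\xi_1^2,1,\zeta_1^2)$ this is $A_4(x(0))=32-5-8-4=15\neq 0$. The other components do vanish, because $x_1(0)=x_5(0)$ and $x_2(0)=x_3(0)$. This one does not, for any $\zeta_1,\xi_1$. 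Near $t=1$ the leading term of \eqref{Einstein:f4:model:E} is $\bigl(5-20/f_4'(1)^2\bigr)(1-t)^{-2}$, while all remaining terms are $O\bigl((1-t)^{-1}\bigr)$. So there is no solution at all with $f_4(1)=0$ and $f_4'(1)=-1$: statement $b)$, with the final conditions \eqref{final:conditions:model:E} as written, is false, and your argument cannot be completed.

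The error is in the normalization \eqref{derivative:psi1:model:E}. The constant $20=32-8-4$ is the Ricci eigenvalue of the symmetric space $K_1/H=\mathrm{Spin}(7)/\mathrm{SU}(4)$ with the metric $(\cdot,\cdot)\big|_{\mathfrak{m}_4}$. Indeed, $-\tfrac12B_{\mathfrak{spin}(7)}(X,Y)=-\tfrac52\mathrm{Tr}(XY)=20(X,Y)$ on $\mathfrak{m}_4$, since the trace forms of the $7$- and $8$-dimensional representations agree on $\mathfrak{spin}(7)$. Hence $(\cdot,\cdot)\big|_{\mathfrak{m}_4}$ has curvature $4$, and $|(d\psi_1)_H(Y_j)|=2$; the factor coming from the double cover $\mathrm{Spin}(7)\to\mathrm{SO}(7)$ is missing. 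The round metric is therefore $4(\cdot,\cdot)\big|_{\mathfrak{m}_4}$, and the correct condition is $f_4'(1)=-2$, i.e. $x_4(0)=4$. The paper's proof treats only $a)$, where the normalization $16(\cdot,\cdot)\big|_{\mathfrak{m}_1}+2(\cdot,\cdot)\big|_{\mathfrak{m}_2}$ is correct and $A(x(0))=0$ holds. It then declares $b)$ analogous and never meets this problem; by doing $b)$ in detail you ran straight into it.

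With $x_4(0)=4$, your outline for $b)$ goes through essentially as you predicted:
\begin{itemize}
\item $A(x(0))=0$;
\item $\det(\mathcal{L}_m)=\frac{m(m+1)(m+3)(m+7)^2(m+8)(m+9)(m+12)}{(m+2)^3}$;
\item $\ker\mathcal{L}_0=\mathrm{span}\{(6,0,0,0,-1)\}$ and $\mathrm{im}\,\mathcal{L}_0=\{v:\ v_1=v_5\}$;
\item $D_0=2C(0,x(0),0)$ satisfies $(D_0)_1=(D_0)_5=-2\lambda\zeta_1^2+48+8\zeta_1^4/\xi_1^4$.
\end{itemize}
This yields a one-parameter family of local solutions.

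For $a)$, your expectation of a one-dimensional kernel is wrong. As the paper computes, $\det(\mathcal{L}_m)$ contains the factor $m^2$ and $\ker\mathcal{L}_0=\mathrm{span}\{(-64,1,0,0,0),(0,0,-\tfrac32,1,1)\}$. Solvability of $\mathcal{L}_0(x^2)=D_0$ then needs two conditions, $(D_0)_1=8(D_0)_2$ and $-2(D_0)_3+(D_0)_4+(D_0)_5=0$. Both hold for $D_0=(-32\lambda,-4\lambda,64-2\lambda\zeta_0^2,64-2\lambda\zeta_0^2,64-2\lambda\zeta_0^2)$, so you get a two-parameter family rather than a one-parameter one. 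No rescaling is needed: one works directly with $x(0)=(16,2,\zeta_0^2,\zeta_0^2,\zeta_0^2)$.
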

\begin{proof}
We argue as in Proposition \ref{local:solutions:model:D}. In this case, we only prove \(a)\), since \(b)\) is obtained in the same way. Set
\[
x_1=\frac{f_1^2}{t^2},\ x_2=\frac{f_2^2}{t^2},\ x_3=f_3^2,\ x_4=f_4^2,\ x_5=f_5^2,
\]
and
\[
y_1=\frac{f_1f_1'}{t^2}-\frac{f_1^2}{t^3},\ 
y_2=\frac{f_2f_2'}{t^2}-\frac{f_2^2}{t^3},\ 
y_3=f_3f_3',\ y_4=f_4f_4',\ y_5=f_5f_5'.
\]
Then the system \eqref{Einstein:f1:model:E}-\eqref{Einstein:f5:model:E} becomes
\[
x'=2y,\ y'=\frac{1}{t^2}A(x)+\frac{1}{t}B(x,y)+C(t,x,y),
\]
where
\begin{align*}
&A(x)=(A_1(x),A_2(x),A_3(x),A_4(x),A_5(x)),\\
&B(x,y)=(B_1(x,y),B_2(x,y),B_3(x,y),B_4(x,y),B_5(x,y)),\\
&C(t,x,y)=(C_1(t,x,y),C_2(t,x,y),C_3(t,x,y),C_4(t,x,y),C_5(t,x,y)),
\end{align*}
with
\begin{align*}
&\begin{aligned}
A_1(x)=&24+2\frac{x_1^2}{x_2^2}-8x_1-12\frac{x_4}{x_5}-12\frac{x_5}{x_4},\\[0.3em]
A_2(x)=&32-8x_2-\frac{1}{2}\frac{x_1}{x_2}-3\frac{x_3}{x_4}-3\frac{x_4}{x_3}-3\frac{x_3}{x_5}-3\frac{x_5}{x_3},\\[0.3em]
A_3(x)=&3\frac{x_3^2}{x_2x_4}+3\frac{x_3^2}{x_2x_5}-3\frac{x_4}{x_2}-3\frac{x_5}{x_2},\\[0.3em]
A_4(x)=&4\frac{x_4^2}{x_2x_3}+2\frac{x_4^2}{x_1x_5}-4\frac{x_3}{x_2}-2\frac{x_5}{x_1},\\[0.3em]
A_5(x)=&4\frac{x_5^2}{x_2x_3}+2\frac{x_5^2}{x_1x_4}-4\frac{x_3}{x_2}-2\frac{x_4}{x_1},
\end{aligned}\\[0.3em]
&\begin{aligned}
B_1(x,y)=&-10y_1-8\frac{x_1y_2}{x_2}-8\frac{x_1y_3}{x_3}-6\frac{x_1y_4}{x_4}-6\frac{x_1y_5}{x_5},\\[0.3em]
B_2(x,y)=&-17y_2-\frac{x_2y_1}{x_1}-8\frac{x_2y_3}{x_3}-6\frac{x_2y_4}{x_4}-6\frac{x_2y_5}{x_5},\\[0.3em]
B_3(x,y)=&-9y_3,\ B_4(x,y)=-9y_4,\ B_5(x,y)=-9y_5,
\end{aligned}\\[0.3em]
&\begin{aligned}
C_1(t,x,y)=&\frac{y_1^2}{x_1}-8\frac{y_1y_2}{x_2}-8\frac{y_1y_3}{x_3}-6\frac{y_1y_4}{x_4}-6\frac{y_1y_5}{x_5}-\lambda x_1+\left(2\frac{x_1^2}{x_3^2}+12\frac{x_1^2}{x_4x_5}\right)t^2
,\\[0.3em]
C_2(t,x,y)=&-6\frac{y_2^2}{x_2}-\frac{y_1y_2}{x_1}-8\frac{y_2y_3}{x_3}-6\frac{y_2y_4}{x_4}-6\frac{y_2y_5}{x_5}-\lambda x_2+\left(3\frac{x_2^2}{x_3x_4}+3\frac{x_2^2}{x_3x_5}\right)t^2
,\\[0.3em]
C_3(t,x,y)=&-6\frac{y_3^2}{x_3}+32-\frac{y_1y_3}{x_1}-8\frac{y_2y_3}{x_2}-6\frac{y_3y_4}{x_4}-6\frac{y_3y_5}{x_5}-\lambda x_3-\left(3\frac{x_2}{x_4}+3\frac{x_2}{x_5}+\frac{1}{2}\frac{x_1}{x_3}\right)t^2
,\\[0.3em]
C_4(t,x,y)=&-4\frac{y_4^2}{x_4}+32-\frac{y_1y_4}{x_1}-8\frac{y_2y_4}{x_2}-8\frac{y_3y_4}{x_3}-6\frac{y_4y_5}{x_5}-\lambda x_4-\left(4\frac{x_2}{x_3}+2\frac{x_1}{x_5}\right)t^2,\\[0.3em]
C_5(t,x,y)=&-4\frac{y_5^2}{x_5}+32-\frac{y_1y_5}{x_1}-8\frac{y_2y_5}{x_2}-8\frac{y_3y_5}{x_3}-6\frac{y_4y_5}{x_4}-\lambda x_5-\left(4\frac{x_2}{x_3}+2\frac{x_1}{x_4}\right)t^2.
\end{aligned}
\end{align*}

The initial conditions give
\[
x(0)=\bigl(16,2,\zeta_0^2,\zeta_0^2,\zeta_0^2\bigr),\ y(0)=(0,0,0,0,0).
\]
Also,
\[
A(x(0))=0,\ 2(dA)_{x(0)}(y(0))+B(x(0),y(0))=0.
\]

Now consider the operators \(\mathcal L_m\) and \(D_m\) defined in \eqref{Lm:definition} and \eqref{Dm:definition}. A direct computation gives
\[
\det(\mathcal L_m)=
\frac{m^2(m+1)(m+3)(m+10)(m+11)(m+12)^2(m+18)}{(m+2)^4}.
\]
So \(\mathcal L_m\) is invertible for every \(m\geq 1\), and
\[
\ker(\mathcal L_0)=
\mathrm{span}\left\{(-64,1,0,0,0),\left(0,0,-\frac32,1,1\right)\right\}.
\]

We can use the same construction as before. Once \(x^0\), \(x^1\), and \(x^2\) are chosen, every higher coefficient is fixed. Here
\[
x^0=x(0)=\bigl(16,2,\zeta_0^2,\zeta_0^2,\zeta_0^2\bigr),\ x^1=2y(0)=(0,0,0,0,0),
\]
and \(x^2\) has to satisfy
\[
\mathcal L_0(x^2)=D_0,
\]
where
\[
D_0=
\bigl(-32\lambda,\,-4\lambda,\,64-2\lambda\zeta_0^2,\,64-2\lambda\zeta_0^2,\,64-2\lambda\zeta_0^2\bigr).
\]
Solving this equation, we get
\[
\begin{aligned}
x^2
=&\left(
\frac{32\lambda\zeta_0^2-2048}{3\zeta_0^2},\ 
0,\ 
16-\frac{\lambda\zeta_0^2}{2},\ 
0,\ 
0
\right)\\
&+s(-64,1,0,0,0)+r\left(0,0,-\frac32,1,1\right),
\ s,r\in\mathbb R.
\end{aligned}
\]
So there are infinitely many choices for \(x^2\), and each one gives a power-series solution. Therefore the system \eqref{Einstein:f1:model:E}-\eqref{Einstein:f5:model:E}, subject to the initial conditions \eqref{initial:conditions:model:E}, has infinitely many local solutions near \(t=0\). This proves \(a)\).
\end{proof}
As in Model {\bf D}, any local solution provided by the proposition above is an Einstein metric defined on a neighborhood of the corresponding singular orbit. Again, if we consider the linear map \(P:\mathbb{R}^5\to\mathbb{R}^5\) given by
\[
P(x_1,x_2,x_3,x_4,x_5)=(x_1,x_2,x_3,x_5,x_4),
\]
one verifies directly from the expressions above that
\[
A(Px)=PA(x),\ B(Px,Py)=PB(x,y),\ C(t,Px,Py)=PC(t,x,y).
\]
In addition,
\[
P(x(0))=x(0),\ P(y(0))=y(0),\ P(D_0)=D_0.
\]
Since every element of \(\ker(\mathcal L_0)\) is fixed by \(P\), every solution of \(\mathcal L_0(x^2)=D_0\) also satisfies
\[
P(x^2)=x^2.
\]
Arguing as in Proposition \ref{nonexistence:model:B}, one can prove that the set \(\{f_4=f_5\}\) is invariant under the system \eqref{Einstein:f1:model:E}-\eqref{Einstein:f5:model:E} subject to \eqref{initial:conditions:model:E}, so any solution locally defined near \(t=0\) cannot be extended smoothly to \([0,1]\), since \(f_4(1)=0<\zeta_1=f_5(1)\). Therefore, we have the following result:

\begin{proposition}\label{nonexistence:model:E}
There is no smooth, globally defined, \(\mathrm{Spin}(10)\)-invariant Einstein metric
\[
g=dt^2+\sum_{i=1}^{5}f_i(t)^2(\cdot,\cdot)\big|_{\mathfrak{m}_i\times\mathfrak{m}_i}
\]
on \(\mathbb{C}P^{15}\) for which $Q_0$ is totally geodesic.%satisfying \eqref{initial:conditions:model:E} and \eqref{final:conditions:model:E}.
\end{proposition}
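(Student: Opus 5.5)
The argument follows the proof of Proposition~\ref{nonexistence:model:B}, now carried out at $t=0$ with the variables from the proof of Proposition~\ref{local:solutions:model:E}.

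Suppose such a metric exists with $Q_0$ totally geodesic. Then the functions $f_i$ satisfy \eqref{Einstein:f1:model:E}--\eqref{Einstein:f5:model:E} and the initial conditions \eqref{initial:conditions:model:E} for some $\zeta_0>0$. Every smooth globally defined invariant metric is also regular at $Q_1$. Since $\mathfrak{n}_1=\mathfrak{m}_1\oplus\mathfrak{m}_2\oplus\mathfrak{m}_3\oplus\mathfrak{m}_5$ and $\mathfrak{p}_1=\mathfrak{m}_4$, this forces $f_4(1)=0$ and $f_5(1)>0$, whether or not $Q_1$ is totally geodesic. By \cite[Theorem 5.2]{DK} the metric is real-analytic, so the variables
\[
x_1=\frac{f_1^2}{t^2},\ x_2=\frac{f_2^2}{t^2},\ x_3=f_3^2,\ x_4=f_4^2,\ x_5=f_5^2
\]
and the corresponding $y_i$ admit convergent power series $x(t)=\sum_m x^m t^m/m!$ near $t=0$. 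These solve $x'=2y$, $y'=t^{-2}A(x)+t^{-1}B(x,y)+C(t,x,y)$, with $x^0=(16,2,\zeta_0^2,\zeta_0^2,\zeta_0^2)$ and $x^1=0$.

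Let $P$ swap the fourth and fifth coordinates. We have already recorded that $A$, $B$ and $C$ are $P$-equivariant, that $x^0$ and $y^0$ are $P$-fixed, and that every solution of $\mathcal L_0(x^2)=D_0$ is $P$-fixed. The last point holds because $P(D_0)=D_0$, a particular solution is $P$-fixed, and both generators of $\ker\mathcal L_0$ are $P$-fixed. So $x^2$ is $P$-fixed whatever free parameters $s,r$ the actual metric realizes.

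Next, argue by induction on $m\ge1$, exactly as in the proof of Proposition~\ref{nonexistence:model:B}, that $P(D_m)=D_m$ and $P(x^{m+2})=x^{m+2}$:
\begin{itemize}
\item Assume $P(x^k)=x^k$ for $k\le m+1$.
\item Then $Px(t)-x(t)=O(t^{m+2})$ and $Py(t)-y(t)=O(t^{m+1})$.
\item Equivariance and the mean value inequality show that $A^k$ ($k\le m+1$), $B^j$ and $C^j$ ($j\le m$) are $P$-fixed.
\item The Faà di Bruno expansions of $A^{m+2}-(dA)_{x(0)}x^{m+2}$ and $B^{m+1}-\tfrac12(\partial_yB)x^{m+2}$ involve only $x^a$ with $a\le m+1$, inserted into equivariant multilinear forms, so they are $P$-fixed.
\item Hence $P(D_m)=D_m$.
\item Since $\det\mathcal L_m\neq0$ for $m\ge1$ and $P$ commutes with $\mathcal L_m$, we get $x^{m+2}=\mathcal L_m^{-1}D_m$ is $P$-fixed.
\end{itemize}

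The conclusion is that $x_4\equiv x_5$ near $0$, so $f_4=f_5$ there. By analyticity of $f_4^2-f_5^2$ on $(0,1)$ this identity holds on all of $(0,1)$, and by continuity $f_4(1)=f_5(1)$. This contradicts $f_4(1)=0<\zeta_1=f_5(1)$.

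The main obstacle is the routine but necessary verification of $P$-equivariance of $A$, $B$ and $C$ from the explicit formulas. The other substantive check, also done explicitly, is that the two-dimensional kernel of $\mathcal L_0$ lies in the fixed space of $P$. This is what makes the non-uniqueness at $m=0$ harmless.
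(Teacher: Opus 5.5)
Your proposal is correct and follows the paper's route. The paper likewise records the $P$-equivariance of $A,B,C$, the $P$-invariance of $x^0$, $y^0$, $D_0$ and of $\ker\mathcal{L}_0$, and then runs the inductive argument of Proposition~\ref{nonexistence:model:B} at $t=0$ to get $f_4=f_5$, contradicting $f_4(1)=0<f_5(1)$; your remark that only regularity at $Q_1$ (not total geodesy there) is needed matches the spirit of Remark~\ref{remark:only:Q1:model:B}.
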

\section*{Acknowledgements}
We would like to thank Paul Schwahn for his comments and suggestions.
L. Grama is partially supported by FAPESP grants no. 2021/04065-6, and CNPq grant no. 306021/2024-2. Anderson de Araujo was partially supported by FAPEMIG grants no. RED-00133-21, FAPEMIG grants no. APQ-04528-22, and CNPQ.

\section*{Ethical Statement}

\paragraph{Data Availability.}
No datasets were generated or analyzed during the current study. Therefore, data sharing is not applicable to this article.
\\

\paragraph{Conflict of Interest.}
The authors declare that they have no conflict of interest.
\\

\paragraph{Research Involving Human Participants and/or Animals.}
This article does not contain any studies involving human participants or animals performed by any of the authors.

\bibliographystyle{apa}

\end{document}